\documentclass[11pt,oneside,letterpaper]{article} 
\usepackage[dvips,final]{graphicx}
\usepackage[dvips]{geometry}
\usepackage{setspace}
\usepackage{mathptmx}
\usepackage{times}
\usepackage{amsmath}
\usepackage{amsthm}
\usepackage{amssymb}
\usepackage{amsfonts}
\usepackage{mathrsfs}
\usepackage[all,cmtip]{xy} 
\usepackage{enumitem}
\usepackage{verbatim}
\usepackage[usenames,dvipsnames,svgnames,table]{xcolor}
\usepackage{flafter}
\usepackage{hyperref}
\usepackage{tocloft}

\DeclareFontFamily{U}{wncy}{}
\DeclareFontShape{U}{wncy}{m}{n}{<->wncyr10}{}
\DeclareSymbolFont{mcy}{U}{wncy}{m}{n}
\DeclareMathSymbol{\Sh}{\mathord}{mcy}{"58}                               
\addtolength{\hoffset}{0pt}                        
\addtolength{\voffset}{0pt}                       
\newtheorem{theorem}{Theorem}[section] 
\newtheorem{lemma}[theorem]{Lemma}

\newtheorem{proposition}[theorem]{Proposition}
\theoremstyle{definition}

\newtheorem{remark}[theorem]{Remark}

\newcommand{\CH}{\mathrm{CH}}

\newcommand{\rank}{\mathrm{rank}}
\newcommand{\Gal}{\mathrm{Gal}}

\newcommand{\dimension}{\mathrm{dim}}
\newcommand{\cor}{\mathrm{cor}} 
\newcommand{\Tr}{\mathrm{Tr}} 
\newcommand{\Hom}{\mathrm{Hom}} 
\newcommand{\Frob}{\mathrm{Frob}}
\newcommand{\Ker}{\mathrm{Ker}}
\newcommand{\End}{\mathrm{End}}
\newcommand{\Aut}{\mathrm{Aut}}

\newcommand{\res}{\mathrm{res}}

\newtheorem*{rep@theorem}{\rep@title}
\newcommand{\newreptheorem}[2]{%
\newenvironment{rep#1}[1]{%
 \def\rep@title{#2 \ref{##1}}%
 \begin{rep@theorem}}%
 {\end{rep@theorem}}}
\newreptheorem{theorem}{Theorem}
\font\smallit=cmti10

\begin{document}

\begin{center}
\uppercase{\bf On the Selmer group attached to a modular form \\ and an algebraic Hecke character}
\vskip 20pt
{\bf Yara Elias\footnote{Supported by a doctoral scholarship of the Fonds Qu\'eb\'ecois de la Recherche sur la Nature et les Technologies.}}\\
{\smallit Department of Mathematics, McGill University}\\
{\tt yara.elias@mail.mcgill.ca}\\
\end{center}

\begin{abstract}

We construct an Euler system of generalized Heegner cycles to bound the Selmer group associated to a modular form and an algebraic Hecke character. The main argument is based on Kolyvagin's method adapted by Bertolini and Darmon \cite{bertolini1990descent} and by Nekov{\'a}{\v{r}} \cite{nekovar1992kolyvagin} while the key object of the Euler system, the generalized Heegner cycles, were first considered by Bertolini, Darmon and Prasanna in \cite{bertolini2013generalized}.
\end{abstract}

\tableofcontents

\vspace{20pt}

\textbf{Acknowledgments}. It is a pleasure to thank my advisor Henri Darmon for numerous discussions of the subject of this article as well as for his suggestions, corrections and valuable feedback on the writing of this monograph. I am grateful to Olivier Fouquet, Eyal Goren, Ariel Shnidman and the anonymous referees for many corrections and suggestions. This work was finalized at the Max Planck Institute for Mathematics, I am thankful for its hospitality.

\section{Introduction} \label{Introduction}

Kolyvagin \cite{kolyvagin1990grothendieck, gross1991kolyvagin} constructs an Euler system based on Heegner points and  uses it to bound the size of the Selmer group of certain (modular) elliptic curves $E$ defined over $\mathbb{Q}$, over imaginary quadratic fields $K$ assuming the non-vanishing of a suitable Heegner point. In particular, this implies that $$\mathrm{rank}(E(K))=1,$$ and the Tate-Shafarevich group $\Sh(E/K)$ is finite.
Bertolini and Darmon adapt Kolyvagin's descent to Mordell-Weil groups over ring class fields \cite{bertolini1990descent}. More precisely, they show that for a given character $\chi$ of $\Gal(K_c/K)$ where $K_c$ is the ring class field of $K$ of conductor $c$, $$\mathrm{rank}( E(K_c)^{\chi})=1$$ assuming that the projection of a suitable Heegner point is non-zero.
More generally, one can associate to a modular form $f$ of even weight $2r$ and level $\Gamma_0(N)$ a $p$-adic Galois representation $T_p(f)$ \cite{Jannsen1990mixed, scholl1990motives}.  
For a given number field $K$, there is a $p$-adic Abel-Jacobi map $$\Phi_{f,K}: \CH^r(X/K)_0 \longrightarrow H^1(K, T_p(f)),$$ where
\begin{itemize}
\item $X$ is the Kuga-Sato variety of dimension $2r-1$, that is, a compact desingularization of the $2r-2$-fold fibre product of the universal generalized elliptic curve over the modular curve $X_1(N)$,
\item $\CH^r(X/K)_0$ is the $r$-th Chow group of $X$ over $K$, that is the group of homologically trivial cycles on $X$ defined over $K$ of codimension $r$ modulo rational equivalence,
\item $H^1(K,T_p(f))$ stands for the first Galois cohomology group of $\Gal(\overline{K}/K)$ acting on $T_p(f)$.
\end{itemize} 
Nekov{\'a}{\v{r}} \cite{nekovar1992kolyvagin} adapts the method of Euler systems to modular forms of higher even weight to describe the image by the Abel-Jacobi map $\Phi_{f,K}$ of Heegner cycles on the associated Kuga-Sato varieties, hence showing that $$\dim_{\mathbb{Q}_p}(\Phi_{f,K}(e_f \ \CH^r(X/K)_0) \otimes_{\mathbb{Z}_p} \mathbb{Q}_p) =1$$ for a suitable projector $e_f$, assuming the non-vanishing of a suitable Heegner cycle.
In Article \cite{Elias2015Kolyvagin}, we combined these two approaches to study modular forms of higher even weight twisted by ring class characters $\chi$ of imaginary quadratic fields and showed that $$\dim_{\mathbb{Q}_p}(\Phi_{f, \chi, K}(e_{f, \chi} \ \CH^r(X/K)_0)\otimes_{\mathbb{Z}_p} \mathbb{Q}_p) =1$$ for a suitable projector $e_{f, \chi}$, assuming the non-vanishing of a suitable generalized Heegner cycle. 
In this article, we study the Selmer group associated to a modular form of even weight $r+2$ and an unramified algebraic Hecke character $\psi$ of $K$ of infinity type $(r,0)$. 
Our setting involves the \emph{generalized Heegner cycles} introduced by Bertolini, Darmon and Prasanna in \cite{bertolini2013generalized}.

Our motivation stems from the Beilinson-Bloch-Kato conjecture that predicts that $$\dimension_{\mathbb{Q}_p}(\Phi_{f,\psi, K}( e_{f,\psi} \ \CH^r(W/K)_0) \otimes_{\mathbb{Z}_p} \mathbb{Q}_p)=ord_{s=r+1 } L(f \otimes \theta_{\psi},s),$$ where $$\theta_{\psi} = \sum_{ \mathfrak{a} \subset \mathcal{O}_K} \psi( \mathfrak{a} ) q^{N( \mathfrak{a})}$$ is the theta series associated to $\psi$ \cite{Schneider1988introduction, Jannsen1990mixed}, $W$ is a Kuga-Sato like variety, and $e_{f, \psi}$ is a suitable projector.

Let $f$ be a normalized newform of level $\Gamma_0(N)$ where $N \geq 5 $ and even weight $r+2 > 2$.
Denote by $K=\mathbb{Q}(\sqrt{-D})$ an imaginary quadratic field 
with odd discriminant
satisfying the Heegner hypothesis, that is primes dividing $N$ split in $K$.
For simplicity, we assume that $$|\mathcal{O}_K^{\times}|=2.$$
Let $$\psi: \mathbb{A}_K^{\times} \longrightarrow \mathbb{C}^{\times}$$ be an unramified algebraic Hecke character of $K$ of infinity type $(r,0)$.
There is an elliptic curve $A$ defined over the Hilbert class field $K_1$ of $K$ with complex multiplication by $\mathcal{O}_K$.  
We further assume that $A$ is a $\mathbb{Q}$-curve, that is $A$ is $K_1$- isogenous to its conjugates in $\mathrm{Aut}(K_1)$. This is possible by the assumption on the parity of $D$, (see \cite[Section~11]{Gross1980arithmetic}).
Consider a prime $p$ not dividing $N D\phi(N )N_A N_\psi ,$ where $N_A$ is the conductor of $A$ and $N_\psi$ is the conductor of $\psi$. 
We denote by $V_{f}$ the $f$-isotypic part of the $p$-adic \'{e}tale realization of the motive associated to $f$ by Scholl \cite{scholl1990motives} and Deligne \cite{deligne1973modular} twisted by $\frac{r+2}{2}$ and by $V_\psi$ the $p$-adic \'{e}tale realization of the  motive associated to $\psi$ twisted by $\frac{r}{2}$. More precisely,
$V_\psi$ is the $\psi$-isotypic component of the first Galois cohomology group of $$ \mathrm{res}_{K_1/\mathbb{Q}}(A) = \prod_{\sigma \in \Gal(K_1/\mathbb{Q})} A^{\sigma}$$ where $A^{\sigma}$ is the $\sigma$-conjugate of $A$, (see Section \ref{MotiveConstruction2} for more details).  
Let $\mathcal{O}_F$ be the ring of integers of $$F=\mathbb{Q}(a_1,a_2, \cdots, b_1,b_2,\cdots),$$ where the $a_i$'s are the coefficients of $f$ and the $b_i$'s are the coefficients of the theta series $$\theta_{\psi} = \sum_{\mathfrak{a} \subset \mathcal{O}_K} \psi(\mathfrak{a}) q^{N(\mathfrak{a})}$$ associated to $\psi$.
Then $V_f$ and $V_\psi$ will be viewed (by extending scalars appropriately) as free $\mathcal{O}_{F}\otimes \mathbb{Z}_p $-modules of rank 2. We denote by $$V=V_f \otimes_{\mathcal{O}_{F} \otimes \mathbb{Z}_p } V_\psi$$ the $p$-adic \'{e}tale realization of the tensor product of $V_f$ and $V_\psi$
and let $V_{\wp}$ be its localization at a prime $\wp$ in $ F$  dividing $p$. Let $\mathcal{O}_{F, \wp}$ be the localization of $\mathcal{O}_{F}$ at $\wp$. Then $V_{\wp}$ is a four dimensional representation of $\Gal(\overline{\mathbb{Q}}/ \mathbb{Q})$ over $\mathcal{O}_{F, \wp}$ with coefficients in $$\End(\mathrm{Res}_{K_1,\mathbb{Q}}(A)/\mathbb{Q})= \oplus_{\sigma \in \Gal(K_1/\mathbb{Q})} \Hom(A, A^{\sigma}),$$
(see Section \ref{MotiveConstruction2}). 

By the Heegner hypothesis, there is an ideal $\mathcal{N}$ of $\mathcal{O}_K$ satisfying $$\mathcal{O}_K/\mathcal{N} = \mathbb{Z}/N \mathbb{Z}.$$ We can therefore fix $\Gamma_1(N)$ level structure on $A$, that is a point of exact order $N$ defined over the ray class field $L_1$ of $K$ of conductor $\mathcal{N}$. 
Consider a pair $(\varphi_1,A_1)$ where $ A_1$ is an elliptic curve defined over $K_1$ with level $N$ structure and 
$$ \varphi_1: A \longrightarrow A_1$$ is an isogeny over $\overline{K}$.
We associate to it a codimension $r+1$ cycle on $V$ $$\Upsilon_{\varphi_1}=Graph(\varphi_1)^r \subset (A \times A_1)^r \simeq (A_1)^r \times A^r $$ and define a \emph{generalized Heegner cycle} of conductor 1
$$\Delta_{\varphi_1} = e_{r} \Upsilon_{\varphi_1},$$  where $e_r$ is an appropriate projector \eqref{projector2}.
Then $\Delta_{\varphi_1}$ is defined over $L_1$. 
We consider the corestriction $$P(\varphi_1)=cor_{L_1,K}\Phi_{f,\psi,L_1}(\Delta_{\varphi_1}) \in  H^1(K, V_{\wp}/p )$$ where $\Phi_{f,\psi,L_1}$ is the $p$-adic \'{e}tale Abel-Jacobi map.
The Selmer group $$S \subseteq H^1(K, V_{\wp}/p )$$ consists of the cohomology classes which localizations at a prime $v$ of $K$ lie in
$$\left\{
   \begin{array}{ll}
         H^1(K^{ur}_{v}/K_v,V_{\wp}/p ) \hbox{ for } v \hbox{ not dividing }NN_A N_\psi p\\
         H^1_f(K_{v}, V_{\wp}/p ) \hbox{ for } v \hbox{ dividing }p
    \end{array}
\right.$$
where $K_v$ is the completion of $K$ at $v$, and $H^1_f(K_v, V_{\wp}/p )=H^1_{cris}(K_v, V_{\wp}/p )$ is the \emph{finite part} of $H^1(K_v, V_{\wp}/p )$ \cite{block1990lfunction}.
Note that the assumptions we make will ensure that $H^1(K^{ur}_{v}/K_{v},V_{\wp}/p)=0$ for $v$ dividing $NN_AN_\psi$. 
We denote by $Fr(v)$ the arithmetic Frobenius element generating $\Gal(K_v^{ur}/K_v),$ and by $I_v=\Gal(\overline{K_v}/K_v^{ur})$.

Let $h$ be the class number of $K$, and let $\mathfrak{a}_1, \cdots, \mathfrak{a}_h$ be a set of representatives of the ideal classes of $\mathcal{O}_K$. 
For a cohomology class $c$ in $H^1(K, V_{\wp}/p)$, we define $$c^{\psi}= \dfrac{1}{h} \ \sum_{i =1}^h  \psi^{-1}(\mathfrak{a}_i) \   \mathfrak{a}_i \cdot c $$
where $ \mathfrak{a}_i \cdot c$ is the image of $c$ by the map $H^1(K, V_{\wp}/p) \longrightarrow H^1(K, V_{\wp}/p \big/ V_{\wp}/p \ [\mathfrak{a}_i])$ induced by the projection map 
$$V_{\wp}/p \longrightarrow V_{\wp}/p \big/ V_{\wp}/p \ [\mathfrak{a}_i].$$ Then $(c^{\psi})^{\psi}=c^{\psi}$ and it is independent of the choice of representatives $\mathfrak{a}_1, \cdots, \mathfrak{a}_h$.
\begin{theorem} \label{theorem 2} 
Let $p$ be such that 
$$(p,N D\phi(N )N_A N_\psi)=1.$$
Suppose that $V_{\wp}/p$ is a simple $\Aut_{L_1}(V_{\wp}/p)$-module and suppose that $\Gal(\overline{L_1}/L_1)$ does not act trivially on $V_{\wp}/p$.
Suppose further that the eigenvalues of $Fr(v)$ acting on $V_{\wp}^{I_v}$ are not equal to 1 modulo $p$ for $v$ dividing $N N_A N_\psi$. 
Assume that $ P(\varphi_1)^{\psi} \neq 0$ in $ H^1(K,  V_{\wp}/p )^{\psi}$. 
Then the $\overline{\psi}$-eigenspace of the Selmer group $S^{\overline{\psi}}$ has rank 1 over $\mathcal{O}_{F, \wp}/p$.
\end{theorem}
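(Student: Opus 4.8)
The plan is to run Kolyvagin's method of Euler systems, in the form used by Nekov\'a\v{r} \cite{nekovar1992kolyvagin} and by Bertolini--Darmon \cite{bertolini1990descent}, with the generalized Heegner cycles of \cite{bertolini2013generalized} (and their norm relations, as exploited already in \cite{Elias2015Kolyvagin}) as the input. First I would fix the admissible primes: call $\ell$ a \emph{Kolyvagin prime} if it is inert in $K$, prime to $ND\phi(N)N_AN_\psi p$, and satisfies the congruence forcing $Fr(\ell)$ to act on $V_{\wp}/p$ with the eigenvalue $1$ of multiplicity one, so that the unramified subgroup $H^1_{ur}(K_\ell,V_{\wp}/p)$ and the singular quotient $H^1_{sing}(K_\ell,V_{\wp}/p)$ are free of rank one over $\mathcal{O}_{F,\wp}/p$ and split $H^1(K_\ell,V_{\wp}/p)$. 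For $n$ a squarefree product of such primes one has generalized Heegner cycles $\Delta_{\varphi_n}$ of conductor $n$, defined over the corresponding ring/ray class field $L_n$; applying Kolyvagin's derivative operators $D_\ell\in\mathbb{Z}[\Gal(L_n/L_{n/\ell})]$ and using the norm relations among the $\Delta_{\varphi_n}$, one checks by the usual telescoping that $D_n\Phi_{f,\psi,L_n}(\Delta_{\varphi_n})$ is $\Gal(L_n/L_1)$-invariant modulo $p$; since the two hypotheses on $V_{\wp}/p$ give $(V_{\wp}/p)^{\Gal(\overline{L_1}/L_1)}=0$, inflation--restriction then produces a class over $L_1$, whose corestriction is
\[
P(\varphi_n)\in H^1(K,V_{\wp}/p),\qquad P(\varphi_1)=\cor_{L_1,K}\Phi_{f,\psi,L_1}(\Delta_{\varphi_1}),
\]
and $P(\varphi_n)^{\psi}$ is formed from it as in the statement.

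Next I would pin down the local behaviour. For $v\nmid npNN_AN_\psi$ the localization $\mathrm{loc}_v P(\varphi_n)^{\psi}$ is unramified; for $v\mid p$ it lies in $H^1_f(K_v,V_{\wp}/p)$ because the $p$-adic Abel--Jacobi image of an algebraic cycle is crystalline; for $v\mid NN_AN_\psi$ it is again unramified, hence zero, since the hypothesis on the eigenvalues of $Fr(v)$ forces $H^1(K_v^{ur}/K_v,V_{\wp}/p)=0$; and for $v=\ell\mid n$ the class is ramified, with the finite--singular comparison isomorphism identifying the singular part $\partial_\ell P(\varphi_n)^{\psi}$ with a unit multiple of $\mathrm{loc}_\ell P(\varphi_{n/\ell})^{\psi}$ (this last being the crucial Euler system relation, descending from the norm relation between $\Delta_{\varphi_n}$ and $\Delta_{\varphi_{n/\ell}}$). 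In particular $P(\varphi_1)\in S$; since eigenspace projection is $\Pic(\mathcal{O}_K)$-equivariant and complex conjugation interchanges the $\psi$- and $\overline{\psi}$-eigenspaces while carrying $P(\varphi_1)$ to a known multiple of itself, the assumption $P(\varphi_1)^{\psi}\neq0$ produces a nonzero class in $S^{\overline{\psi}}$, so $\rank_{\mathcal{O}_{F,\wp}/p}S^{\overline{\psi}}\ge1$.

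For the reverse inequality I would use global duality. Fix the $\Pic(\mathcal{O}_K)$-equivariant perfect pairing $V_{\wp}/p\times V_{\wp}/p\to(\mathcal{O}_{F,\wp}/p)(1)$ coming from the polarizations of $f$ and $A$ (the tensor product sits at the centre of the functional equation of $L(f\otimes\theta_\psi,s)$), which pairs the $\psi$- and $\overline{\psi}$-eigenspaces perfectly, induces local Tate pairings summing to zero by global class field theory, and for which $H^1_{ur}$ at good primes and $H^1_f$ at $p$ are their own annihilators. Given $s\in S^{\overline{\psi}}$, the hypotheses that $V_{\wp}/p$ is a simple $\Aut_{L_1}(V_{\wp}/p)$-module and that $\Gal(\overline{L_1}/L_1)$ acts nontrivially furnish the cohomological vanishing needed to apply Chebotarev's theorem to the field cut out by $V_{\wp}/p$, enlarged by the splitting fields of $P(\varphi_1)^{\psi}$ and $s$; this produces a Kolyvagin prime $\ell$ with $\mathrm{loc}_\ell P(\varphi_1)^{\psi}\neq0$ and with $\mathrm{loc}_\ell s=0$ unless $s$ lies in a fixed cyclic submodule. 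Applying the reciprocity law to $P(\varphi_\ell)^{\psi}$ and $s$, every term $\langle\mathrm{loc}_v P(\varphi_\ell)^{\psi},\mathrm{loc}_v s\rangle_v$ with $v\neq\ell$ vanishes by the local conditions above and their orthogonality, so $\langle\partial_\ell P(\varphi_\ell)^{\psi},\mathrm{loc}_\ell s\rangle_\ell=0$; since $\partial_\ell P(\varphi_\ell)^{\psi}$ generates $H^1_{sing}(K_\ell,V_{\wp}/p)$ (being a unit multiple of $\mathrm{loc}_\ell P(\varphi_1)^{\psi}\neq0$) and pairs perfectly with the line $H^1_{ur}(K_\ell,V_{\wp}/p)\ni\mathrm{loc}_\ell s$, we get $\mathrm{loc}_\ell s=0$, forcing $s$ into the cyclic submodule. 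Hence $\rank_{\mathcal{O}_{F,\wp}/p}S^{\overline{\psi}}\le1$.

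The main obstacle is twofold. The first part consists of the input facts of the first two steps — well-definedness of $P(\varphi_n)$ modulo $p$, the crystalline-ness of the Abel--Jacobi image at $p$, and the finite--singular relation at $\ell$ — which rest on the norm relations for generalized Heegner cycles from \cite{bertolini2013generalized}. The more serious part is the Chebotarev step: one must translate the two ``largeness'' hypotheses on $V_{\wp}/p$ into the precise vanishing of $H^1$ of the image of $\Gal(\overline{\mathbb{Q}}/L_1)$, suitably enlarged, so as to separate the seed class $P(\varphi_1)^{\psi}$ from an arbitrary Selmer class by a single Frobenius, all while keeping the $\psi$ versus $\overline{\psi}$ eigenspace bookkeeping straight — this being what makes the situation heavier than Kolyvagin's original.
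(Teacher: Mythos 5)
Your overall skeleton is the right one — Kolyvagin primes forcing $\Frob_\ell(L/\mathbb{Q})=\Frob_\infty(L/\mathbb{Q})$, the derivative classes $P(\varphi_n)$ built from generalized Heegner cycles and descended to $K$, the local analysis at $v\mid NN_AN_\psi$, $v\mid p$, and $v=\ell$, the global pairing and reciprocity, and a Chebotarev argument. But there is a concrete gap precisely at the point you yourself flag as ``the more serious part,'' and it stems from a dimension count.

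You treat the local modules as rank one: ``$H^1_{ur}(K_\ell,V_{\wp}/p)$ and $\dots$ $H^1_{sing}(K_\ell,V_{\wp}/p)$ are free of rank one over $\mathcal{O}_{F,\wp}/p$,'' and later you speak of ``the line $H^1_{ur}(K_\ell,V_{\wp}/p)\ni\mathrm{loc}_\ell s$.'' This is what happens in Nekov\'a\v{r}'s setting where $V_\wp/p$ is two-dimensional. Here $V_\wp/p$ is a four-dimensional $\Gal(\overline{\mathbb{Q}}/\mathbb{Q})$-module, and for a Kolyvagin prime $\ell$ the twisted Frobenius has eigenvalues $\pm1$ each with multiplicity two (the congruences force the characteristic polynomial to be $\equiv (x^2-1)^2 \bmod p$). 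Hence $(V_\wp/p)^{Fr(\lambda)}$, $H^1(K_\lambda^{ur}/K_\lambda,V_\wp/p)$, $H^1(K_\lambda^{ur},V_\wp/p)^{Fr(\lambda)}$, and their $\psi$-eigenspaces are all free of rank two over $\mathcal{O}_{F,\wp}/p$. So your single auxiliary class $P(\varphi_\ell)^\psi$ cannot generate the local module, and the reciprocity identity $\langle \partial_\ell P(\varphi_\ell)^{\psi},\mathrm{loc}_\ell s\rangle_\ell=0$ annihilates only a rank-one subspace and does not force $\mathrm{loc}_\ell s=0$. The bound you would obtain is $\rank_{\mathcal{O}_{F,\wp}/p} S^{\overline\psi}\le 2$, not $\le 1$.

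The paper closes this gap by introducing a second seed: a fixed auxiliary Kolyvagin prime $q$, chosen by Chebotarev so that $\res_\beta P(\varphi_1)^\psi\ne0$ for $\beta\mid q$, and the class $P(\varphi_q)^\psi$. Setting $H_1=L(P(\varphi_1))$, $H_2=L(P(\varphi_q))$, and $H_0=H_1H_2$, the Chebotarev set $L(U_0)$ is cut out by Frobenius conditions inside $\Gal(H_0/\mathbb{Q})$, and for $\ell\in L(U_0)$ the two restrictions $\res_\lambda P(\varphi_\ell)^\psi$ and $\res_\lambda P(\varphi_{\ell q})^\psi$ together generate the rank-two module $H^1(K_\lambda^{ur},V_\wp/p)^\psi$. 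Reciprocity then kills the contribution of $P(\varphi_\ell)^\psi$ and identifies the contribution of $P(\varphi_{\ell q})^\psi$ with a fixed class at $q$, so that all $X_\ell^\psi$ collapse into the single rank-one module $X_q^\psi$. Two further points you would need to supply: (i) the linear disjointness of $L(P(\varphi_1)^\psi)$ and $L(P(\varphi_q)^\psi)$ over $L$, proved via the different ramification of the two classes at $\beta\mid q$; and (ii) the sign discrepancy $\tau P(\varphi_1)^\psi=\epsilon k\sigma P(\varphi_1)^{\overline\psi}$ versus $\tau P(\varphi_q)^\psi=-\epsilon k\sigma P(\varphi_q)^{\overline\psi}$ (coming from $\tau D_q=-D_q\tau$), which is exactly what makes $U_0^+$ generate $V_0^+$ and the Chebotarev/Frobenius separation go through. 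Without the auxiliary class at $q$ and this eigenspace bookkeeping, the argument does not yield the stated rank bound.
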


To prove Theorem \ref{theorem 2}, we first consider the $p$-adic \'{e}tale realization of the twisted motive $V$ associated to $f$ and $\psi$ in the middle \'{e}tale cohomology of the associated Kuga-Sato variety. This provides us with a $p$-adic Abel-Jacobi map that lands in the Selmer group $S$. Next, we construct an Euler system of generalized Heegner cycles which were first considered by Bertolini, Darmon and Prasanna in \cite{bertolini2013generalized}. These algebraic cycles lie in the domain of the $p$-adic Abel Jacobi map.
In order to bound the rank of the $\overline{\psi}$-eigenspace of the Selmer group $S^{\overline{\psi}}$, we apply Kolyvagin's descent using local Tate duality, the local reciprocity law, an appropriate global pairing of $S$ and Cebotarev's density theorem.

Our development is an adaptation of Nekov{\'a}{\v{r}}'s techniques \cite{nekovar1992kolyvagin} and Bertolini and Darmon's approach \cite{bertolini1990descent}. The main novelty is that the algebraic Hecke character $\psi$ is of infinite type. In particular, the Galois representation associated to $V$ is four-dimensional over $\mathcal{O}_{F, \wp}/p$.

\section{Motive associated to a modular form and a Hecke character} \label{MotiveConstruction2}
In this section, we describe the construction of the four dimensional $\Gal(\overline{\mathbb{Q}}/\mathbb{Q})$-representation $$V_{\wp}=(V_{f} \otimes_{\mathcal{O}_{F}\otimes \mathbb{Z}_p} V_\psi)_{\wp},$$ where $\wp$ is a prime of $F$ dividing $p$.
Denote by $Y_1(N)$ the affine modular curve over $\mathbb{Q}$ parametrising elliptic curves with level $\Gamma_1(N)$.
Let $j: Y_1(N) \hookrightarrow X_1(N)$ be its proper compact desingularization classifying generalized elliptic curves of level $\Gamma_1(N)$.
The assumption $N\geq 5$ allows for the definition of the generalized universal elliptic curve $\pi: \mathscr{E} \longrightarrow X_1(N)$. 
Denote by $W_{r}$ the Kuga-Sato variety of dimension $r+1$, that is a compact desingularization of the $r$-fold fiber product of $\mathcal{E} $ over $X_1(N)$.
We let $W$ be the $2r+1$-dimensional variety defined by $$W= W_r \times A^r,$$
where $A$ is as in Section \ref{Introduction}. 
We denote by $[\alpha]$ the element of $\End_{K_1}(A)\otimes_{\mathbb{Z}} \mathbb{Q}$ corresponding to an element $\alpha$ of $K$.
Given two non-singular varieties $X$ and $Y$ over a number field, the group of correspondences $\mathrm{Corr}^m(X,Y)= \mathrm{CH}^{\dim(X)+m}(X \times Y)$ is the group of algebraic cycles of codimension $ \dim(X)+m$ on $X \times Y$ modulo rational equivalence, (see \cite[Section~2]{bertolini2012chow} for more details). 
Consider the projectors
$$e_A^{(1)}=\left( \dfrac{\sqrt{-D}+[ \sqrt{-D}] }{2\sqrt{-D}} \right) ^{\otimes r} +\left( \dfrac{\sqrt{-D}-[ \sqrt{-D}] }{2\sqrt{-D}} \right) ^{\otimes r}, \hbox{  } e_A^{(2)}= \left( \dfrac{1-[-1]}{2} \right) ^{\otimes r}$$
and $$e_A=e_A^{(1)} \circ e_A^{(2)}$$ in $ \mathbb{Q}[\End(A)]^r$. 
These projectors $e_A^{(1)}$, $ e_A^{(2)}$ and $e_A$ belong to the group of correspondences $$\mathrm{Corr}^0(A^r,A^r)_{\mathbb{Q}}=\mathrm{Corr}^0(A^r,A^r) \otimes_{\mathbb{Z}} \mathbb{Q}$$ from $A^r$ to itself. 
Let $$\Gamma_r=( \mathbb{Z}/N \rtimes \mu_2)^{r} \rtimes \Sigma_{r}$$ where $\mu_2=\{ \pm 1\}$ and $\Sigma_{r}$ is the symmetric group on $r$ elements. Then $\Gamma_r$ acts on $W_r$, (see \cite[Sections~1.1.0 \hbox{ and} 1.1.1]{scholl1990motives} for more details.)
The projector $e_W$ in $\mathbb{Z}\left[\dfrac{1}{2Nr!}\right][\Gamma_r]$ associated to $\Gamma_r$, called Scholl's projector, belongs to the group of zero correspondences $\mathrm{Corr}^0(W_r,W_r)_{\mathbb{Q}}$ from $W_r$ to itself over $\mathbb{Q}$, (see \cite[Section~2.1]{bertolini2012chow}).
Let 
\begin{align} \label{projector2}
e_{r} = e_W e_A,
\end{align}
be the projector in the group of zero correspondences $\mathrm{Corr}^0(W,W)_{\mathbb{Q}}$ from $W$ to itself over $\mathbb{Q}$.
We consider the sheafs $$\mathcal{F} =j_{*}Sym^{r}(R^1\pi_* \mathbb{Z}_p ) \ \hbox{ and } \ \mathcal{F}_{ A}=j_{*}Sym^{r}(R^1\pi_*  \mathbb{Z}_p) \otimes e_A H^r_{et}(\overline{A^r}, \mathbb{Z}_p) .$$
\begin{proposition}\label{Etale2}
The \'{e}tale cohomology group
$$H^1_{et}(X_1(N) \otimes \overline{\mathbb{Q}}, \mathcal{F}_{A}) $$ is isomorphic to $$ e_{r}  H_{et}^{2r+1}(\overline{W} \otimes \overline{\mathbb{Q}},  \mathbb{Z}_p)$$ and $$ H^1_{et}(X_1(N) \otimes \overline{\mathbb{Q}}, \mathcal{F}) \otimes e_A H^r_{et}(\overline{A^r}, \mathbb{Z}_p).$$
Also, we have $$e_{r} H_{et}^i(\overline{W} \otimes \overline{\mathbb{Q}},  \mathbb{Z}_p)=0$$ for all $i \neq 2r+1$.
\end{proposition}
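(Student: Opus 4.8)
The plan is to deduce all three assertions from two inputs — Scholl's description of the cohomology of the Kuga-Sato variety $W_r$ and an elementary weight computation for the CM curve $A$ — combined via the K\"unneth formula for the product $W = W_r \times A^r$. One works $p$-integrally throughout: since $p$ is prime to the denominators of the projectors (the integer $2Nr!$ for $e_W$ and $2D$ for $e_A$; here one may in addition need $p > r$, or else one runs the argument over $\mathbb{Z}[1/2Nr!]$ and base changes), the projectors $e_W$, $e_A$ and $e_r = e_W e_A$ act on $\mathbb{Z}_p$-coefficient \'etale cohomology and cut out direct summands, and the subquotients they isolate are torsion free so that K\"unneth applies integrally.

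First I would invoke \cite{scholl1990motives} (see also \cite{deligne1973modular, bertolini2012chow}). For the fibration $\pi_r : W_r \to X_1(N)$, Scholl's projector satisfies $e_W H^i_{et}(\overline{W_r} \otimes \overline{\mathbb{Q}}, \mathbb{Z}_p) = 0$ for $i \neq r+1$, and there is a canonical identification $e_W H^{r+1}_{et}(\overline{W_r} \otimes \overline{\mathbb{Q}}, \mathbb{Z}_p) \cong H^1_{et}(X_1(N) \otimes \overline{\mathbb{Q}}, \mathcal{F})$. The mechanism is the Leray spectral sequence $H^a_{et}(X_1(N) \otimes \overline{\mathbb{Q}}, R^b\pi_{r,*}\mathbb{Z}_p) \Rightarrow H^{a+b}_{et}(\overline{W_r} \otimes \overline{\mathbb{Q}}, \mathbb{Z}_p)$: the projector $e_W$ annihilates $R^b\pi_{r,*}\mathbb{Z}_p$ for $b \neq r$ and identifies its action on $R^r\pi_{r,*}\mathbb{Z}_p$ with the symmetrized sheaf, which after extension across the cusps is $j_*\mathrm{Sym}^r(R^1\pi_*\mathbb{Z}_p) = \mathcal{F}$, while the curve $X_1(N)$ has cohomology with values in $\mathcal{F}$ concentrated in degree $a = 1$ (degrees $0$ and $2$ vanish because the monodromy acts without invariants or coinvariants on $\mathrm{Sym}^r$ for $r \geq 1$), so only $i = a+r = r+1$ survives.

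Next I would treat $A^r$. By K\"unneth, $H^j_{et}(\overline{A^r} \otimes \overline{\mathbb{Q}}, \mathbb{Z}_p) = \bigoplus_{j_1 + \cdots + j_r = j} H^{j_1}_{et}(\overline{A} \otimes \overline{\mathbb{Q}}, \mathbb{Z}_p) \otimes \cdots \otimes H^{j_r}_{et}(\overline{A} \otimes \overline{\mathbb{Q}}, \mathbb{Z}_p)$, and $[-1] \in \End(A)$ acts on $H^{j_k}_{et}(\overline{A})$ by the scalar $(-1)^{j_k}$; hence $e_A^{(2)} = \left( \frac{1-[-1]}{2} \right)^{\otimes r}$ kills every summand except $H^1_{et}(\overline{A})^{\otimes r}$, which lies in degree $j = r$. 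Consequently $e_A H^j_{et}(\overline{A^r} \otimes \overline{\mathbb{Q}}, \mathbb{Z}_p) = 0$ for $j \neq r$, while $e_A H^r_{et}(\overline{A^r} \otimes \overline{\mathbb{Q}}, \mathbb{Z}_p)$ is the rank-$2$ summand of $H^1_{et}(\overline{A})^{\otimes r}$ that $e_A^{(1)}$ selects using the eigenvalues $\pm\sqrt{-D}$ of the CM action of $[\sqrt{-D}]$. Because $A$ is defined over $K_1 \subseteq \overline{\mathbb{Q}}$, this is a constant \'etale sheaf on $X_1(N) \otimes \overline{\mathbb{Q}}$, namely a finite free $\mathbb{Z}_p$-module carrying a $\Gal(\overline{\mathbb{Q}}/\mathbb{Q})$-action.

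Finally I would assemble. Regarding $e_r = e_W e_A$ as the exterior product of the two correspondences, it acts on $H^n_{et}(\overline{W} \otimes \overline{\mathbb{Q}}, \mathbb{Z}_p) = \bigoplus_{i+j=n} H^i_{et}(\overline{W_r} \otimes \overline{\mathbb{Q}}, \mathbb{Z}_p) \otimes H^j_{et}(\overline{A^r} \otimes \overline{\mathbb{Q}}, \mathbb{Z}_p)$ as $e_W \otimes e_A$, so by the two vanishing statements the only nonzero summand has $i = r+1$, $j = r$; this yields $e_r H^n_{et}(\overline{W} \otimes \overline{\mathbb{Q}}, \mathbb{Z}_p) = 0$ for $n \neq 2r+1$ and
$$e_r H^{2r+1}_{et}(\overline{W} \otimes \overline{\mathbb{Q}}, \mathbb{Z}_p) \cong e_W H^{r+1}_{et}(\overline{W_r} \otimes \overline{\mathbb{Q}}, \mathbb{Z}_p) \otimes e_A H^r_{et}(\overline{A^r} \otimes \overline{\mathbb{Q}}, \mathbb{Z}_p) \cong H^1_{et}(X_1(N) \otimes \overline{\mathbb{Q}}, \mathcal{F}) \otimes e_A H^r_{et}(\overline{A^r} \otimes \overline{\mathbb{Q}}, \mathbb{Z}_p),$$
which is the second isomorphism. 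For the first, note that $\mathcal{F}_A = \mathcal{F} \otimes e_A H^r_{et}(\overline{A^r}, \mathbb{Z}_p)$ with the second factor constant; since tensoring with a constant finite free $\mathbb{Z}_p$-module commutes with \'etale cohomology, $H^1_{et}(X_1(N) \otimes \overline{\mathbb{Q}}, \mathcal{F}_A) \cong H^1_{et}(X_1(N) \otimes \overline{\mathbb{Q}}, \mathcal{F}) \otimes e_A H^r_{et}(\overline{A^r} \otimes \overline{\mathbb{Q}}, \mathbb{Z}_p)$, which closes the chain. The argument is essentially K\"unneth bookkeeping on top of Scholl's theorem; the only delicate point is the integrality flagged in the first paragraph — with $\mathbb{Q}_p$-coefficients there is nothing to do beyond the K\"unneth decomposition and the vanishing statements.
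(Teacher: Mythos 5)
Your argument is correct and is precisely the content underlying the two references the paper cites in its one-line proof: Scholl's theorem for $e_W H^*(\overline{W_r})$ via the Leray spectral sequence, the weight/CM computation for $e_A H^*(\overline{A^r})$, and the K\"unneth assembly are exactly what \cite[Prop.~2.4]{bertolini2013generalized} and \cite[Thm.~1.2.1]{scholl1990motives} supply, with the integral-coefficient caveat handled by the remark in \cite{nekovar1992kolyvagin} that the paper also invokes. So this is the same approach, just written out in full rather than by citation.
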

\begin{proof}
The proof is a combination of \cite[theorem~1.2.1]{scholl1990motives} and \cite[proposition~2.4]{bertolini2013generalized}. Note that the proof in \cite[theorem~1.2.1]{scholl1990motives} involves $\mathbb{Q}_p$ coefficients but it is still valid in our setting, (see the Remark following \cite[Proposition~2.1]{nekovar1992kolyvagin}).
\end{proof}
Let $B=\Gamma_0(N)/ \Gamma_1(N)$. 
We define $$\widetilde{V}= e_B H^1_{et}(X_1(N) \otimes \overline{\mathbb{Q}},  \mathcal{F}_A)(r+1)$$
where 
$e_B=\frac{1}{|B|}{\sum_{b \in B} b}.$
Given rational primes $\ell$ coprime to $NN_A N_\psi$, the Hecke operators $T_{\ell}$ provide correspondences on $X_1(N)$ \cite{scholl1990motives}, inducing endomorphisms of $\widetilde{V}$.
Letting $$I = \Ker \{ \mathbb{T} \longrightarrow \mathcal{O}_F: T_{\ell} \mapsto a_{\ell}b_\ell,\ \forall  \ell \nmid NN_A N_\psi \},$$
we can define the $(f,\psi)$-isotypic component of $\widetilde{V}$ by $$V=\widetilde{V}/ I \widetilde{V}.$$ 
Hence, there is a map $m: \widetilde{V} \longrightarrow V$ that is equivariant under the action of Hecke operators $T_{\ell}$, for $\ell$ not dividing $NN_A N_\psi$ and under the action of the Galois group $\Gal(\overline{\mathbb{Q}}/\mathbb{Q})$.
The $f$-isotypic component of $e_B H^1_{et}(X_1(N) \otimes \overline{\mathbb{Q}}, \mathcal{F})(\frac{r}{2}+1)$ gives rise (by extending scalars appropriately) to $V_f$ and $e_A H^r_{et}(\overline{A^r}, \mathbb{Z}_p)(\frac{r}{2})$ gives rise to $V_\psi$. They are free $\mathcal{O}_{F}\otimes \mathbb{Z}_p$-modules of rank 2.
Hence, $$V_{\wp}=(V_{f} \otimes_{\mathcal{O}_{F} \otimes \mathbb{Z}_p} V_\psi)_{\wp}$$ is a four dimensional representation of $\Gal(\overline{\mathbb{Q}}/\mathbb{Q})$ over $\mathcal{O}_{F,\wp}$ with coefficients in $$\End(A/\mathbb{Q})= \oplus_{\sigma \in \Gal(H/\mathbb{Q})} \Hom(A, A^{\sigma}).$$

\section{p-adic Abel-Jacobi map}
We use Proposition \ref{Etale2} to view the $p$-adic \'{e}tale realization of the twisted motive $V$ associated to $f$ and $\psi$ in the middle \'{e}tale cohomology of the associated Kuga-Sato varieties.
For an integer $n$ with $(n,pNN_A N_\psi)=1$, let 
$$L_n=L_1 \cdot K_n $$ be the compositum of the ring class field $K_n$ of $K$ of conductor $n$ with the ray class field $L_1$ of $K$ of conductor $\mathcal N $.
Consider the $p$-adic \'{e}tale Abel-Jacobi map 
$$ \CH^{r+1}(W/L_n)_0 \longrightarrow H^1 \left(L_n,H^{2r+1}_{et} \left(W \otimes \overline{\mathbb{Q}} , \mathbb{Z}_p(r+1) \right) \right),$$ where $\CH^{r+1}(W/L_n)_0$ is the group of homologically trivial cycles of codimension $r+1$ on $W$ defined over $L_n$, modulo rational equivalence. This map factors through $e_{r}(\CH^{r+1}(W/L_n)_0 \otimes \mathbb{Z}_p)$ as the Abel-Jacobi map commutes with correspondences on $W$.
Composing the Abel-Jacobi map with the projectors $e_{r}$ and $e_B$ and with $m: \widetilde{V} \longrightarrow V$, we obtain a map 
$$\Phi_{f,\psi,L_n}: e_{r}(\CH^{r+1}(W/L_n)_0 \otimes \mathbb{Z}_p) \longrightarrow H^1(L_n,V),$$  
which is $\mathbb{T} \lbrack \Gal(L_n/\mathbb{Q}) \rbrack $-equivariant. 

\textbf{Conjectures and motivation.}
Beilinson \cite[Conjecture~5.9]{Beilinson1989height} predicts that $$ \dim_{\mathbb{Q}} e_r \CH^{r+1}(W/K)_0 = \mathrm{ord}_{s=r+1 } L(f \otimes \theta_{\psi},s).$$
Bloch and Kato \cite{block1990lfunction} conjecture that $$\Phi_{f,\psi,K}: e_r \CH^{r+1}(W/K)_0  \otimes_{\mathbb{Q}} \mathbb{Q}_p \longrightarrow S \otimes_{\mathbb{Z}_p} \mathbb{Q}_p $$ is an isomorphism.
As a consequence, one expects that
$$\dimension_{\mathbb{Q}_p}(S \otimes_{\mathbb{Z}_p} \mathbb{Q}_p)= \mathrm{ord}_{s=r+1 } L(f \otimes \theta_{\psi},s).$$
In certain settings to be relaxed in his forthcoming work, Shnidman \cite[Remark X.2]{Shnidman2014padic} relates $L'(f \otimes \theta_{\psi},r+1)$ to the complex valued height $<\cdot \ , \ \cdot>_{B}$ of $\Delta_{\varphi_1}$ defined by Beilinson \cite{Beilinson1989height}, that is 
$$ L'(f \otimes \theta_{\psi},r+1)=<\Delta_{\varphi_1} \ , \ \Delta_{\varphi_1}>_B $$ up to an explicit constant. However, it is worth noting that the pairing $<\cdot \ , \ \cdot>_{B}$ is not known to be non-degenerate.

Kolyvagin's results \cite{kolyvagin1990grothendieck} combined with those of Gross and Zagier \cite{gross1986heegner} prove the Birch and Swinnerton-Dyer conjecture for analytic rank less than or equal to 1. This is the particular case of the Beilinson-Bloch-Kato conjectures where the modular form $f$ is associated to an elliptic curve and $\psi$ is the trivial character.
Nekov{\'a}{\v{r}}'s results \cite{nekovar1992kolyvagin, nekovavr1995thep} that correpond to the setting where $\psi$ is trivial provide further evidence towards a $p$-adic analog of the Beilinson-Bloch-Kato conjecture of the form 
$$\dimension_{\mathbb{Q}_p}(\mathrm{Im}(\Phi_{f,K}) \otimes_{\mathbb{Z}_p} \mathbb{Q}_p)=\mathrm{ord}_{s=r+1 } L_p(f ,s)$$ due to Perrin-Riou \cite[section~2.8]{colmez1998fonctions},  \cite{perrinriou1987points}.
Shnidman \cite{Shnidman2014padic} relates the order of vanishing $$ \mathrm{ord}_{s=r+1} L_p(f \otimes \theta_{\psi},s)$$ of the $p$-adic $L$-function at $s=r+1$  to the height of the image by the $p$-adic Abel-Jacobi map of an appropriate generalized Heegner cycle of conductor 1.
In this article, we prove that $$\dimension_{\mathbb{Q}_p}(\mathrm{Im}(\Phi_{f,\psi,K})^{\overline{\psi}} \otimes_{\mathbb{Z}_p} \mathbb{Q}_p)=1.$$

\section{Generalized Heegner cycles}
We describe the construction of generalized Heegner cycles following Bertolini, Darmon and Prasanna \cite{bertolini2013generalized}. Consider pairs $(\varphi_i,A_i)$ where $ A_i$ is an elliptic curve with level $N$ structure and 
$$ \varphi_i: A \longrightarrow A_i$$ is an isogeny over $\overline{K}$.
Two pairs $$(\varphi_i,A_i), \ (\varphi_j,A_j)$$ are said to be \emph{isomorphic} if there is a $\overline{K}$-isomorphism $\alpha: A_i \longrightarrow A_j$ satisfying $\alpha \circ \varphi_i= \varphi_j$.
Recall that $\mathcal N$ is an ideal of $\mathcal{O}_K$ such that $\mathcal{O}_K/\mathcal{N} = \mathbb{Z}/N \mathbb{Z}.$
Let $Isog^{\mathcal{N}}(A)$ denote the isomorphism classes of pairs $(\varphi_i,A_i)$ with $ker(\varphi_i) \cap A[\mathcal{N}]$ trivial.
For $(\varphi_i, A_i)$ in $ Isog^{\mathcal{N}}(A)$, we associate a codimension $r+1$ cycle on $W$ $$\Upsilon_{\varphi_i}=graph(\varphi_i)^r \subset (A \times A_i)^r \simeq (A_i)^r \times A^r \subset W_r \times A^r$$
and define a \emph{generalized Heegner cycle} $$\Delta_{\varphi_i} = e_{r} \Upsilon_{\varphi_i}.$$
We view $graph(\varphi_i)$ as a subset of $A_i \times A$.
Denote by $D_{A_i}$ the element $$(graph(\varphi_i)- 0 \times A -\deg(\varphi_i)( A_i\times 0)) \hbox{ in } NS(A_i \times A),$$ where $NS(A_i \times A)$ is the N\'{e}ron-Severi group of $A_i \times A$. Then by an adaptation of \cite[Paragraph~II.3.2]{nekovavr1995thep}, we have $$\Delta_{\varphi_i}=D_{A_i}^r. $$
Let us assume that the index $i$ of $A_i$ indicates that $\End(A_i)$, which is an order in $\mathcal{O}_K$, has conductor $i$. 
Then $\Delta_{\varphi_i}$ is defined over the compositum of the abelian extension $\widetilde{K}$ of $K$ over which the isomorphism class of $A$ is defined, with the ring class field $K_{i}$ of conductor $i$. 
Since $\widetilde{K}$ is the smallest extension of $K_1$ over which $\Gal(\overline{K}/\widetilde{K})$ acts trivially on $A[\mathcal{N}]$, it is equal to the ray class field $L_1$ of $K$ of conductor $\mathcal{N}$.
Therefore, $\Delta_{\varphi_i}$ is defined over $$L_i=L_1 K_{i}.$$ 
Hence, $$\Delta_{\varphi_i} \hbox{ belongs to }\CH^{r+1}(W/L_i).$$ 
In fact, $\Delta_{\varphi_i}$ is homologically trivial on $W$ as shown in \cite[proposition~2.7]{bertolini2013generalized}.
In the rest of this section, we consider elements $(\varphi_i,A_i)$ and $ (\varphi_j,A_j)$ in $ Isog^{\mathcal{N}}(A).$

\begin{lemma} \label{Divisors2}
Consider the map $$g \times I: A_i \times A \longrightarrow A_j \times A,$$ where $g: A_i \longrightarrow A_j$ is an isogeny of elliptic curves and $I: A \longrightarrow A$ is the identity map. Then $$(g \times I )_{*} D_{A_i} = \deg(g)\ \dfrac{ \deg (\varphi_i)}{\deg (\varphi_j)} \  D_{A_j}.$$ 
\end{lemma}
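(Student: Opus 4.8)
The plan is to compute $(g\times I)_*$ on each of the three divisor classes making up $D_{A_i}$ separately and then combine. Recall $D_{A_i} = \mathrm{graph}(\varphi_i) - 0\times A - \deg(\varphi_i)(A_i\times 0)$ in $NS(A_i\times A)$. First I would handle the two "trivial" pieces. The fibre $0\times A \subset A_i\times A$ is pushed forward by $g\times I$ to $\deg(g)\,(0\times A)$ inside $A_j\times A$, since $g$ is an isogeny of degree $\deg(g)$ and $g(0)=0$; as a cycle class this is $\deg(g)\,(0_{A_j}\times A)$. Similarly $(g\times I)_*(A_i\times 0) = (g_*A_i)\times 0 = (A_j)\times 0$ as a $1$-cycle class, because $g$ is surjective of degree $\deg(g)$ but $g_*[A_i] = [A_j]$ after accounting for degree — here I must be slightly careful: $g_*[A_i]=\deg(g)[A_j]$ does not hold; rather $g$ maps $A_i$ onto $A_j$ generically $\deg(g)$-to-one, so $g_*[A_i] = [A_j]$ only when... no: for a finite surjective map of curves of degree $d$, $g_*[A_i] = d\,[A_j]$ is false; the correct statement is $g_*[A_i] = [A_j]$ if we use the convention that pushforward multiplies by the degree of the map onto its image, which for a surjective isogeny is $\deg(g)$, so $g_*[A_i] = \deg(g)[A_j]$. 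I would pin this convention down from \cite{bertolini2013generalized} and be consistent; the net effect is that the $A_i\times 0$ term contributes with an extra factor governed by $\deg(g)$ and the $\deg(\varphi_i)$ coefficient.

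The key computation is the pushforward of the graph. One has $\mathrm{graph}(\varphi_i) = \{(\varphi_i(x), x) : x\in A\} \subset A_i\times A$, and $(g\times I)$ sends this to $\{(g\varphi_i(x), x): x\in A\}$. Now $g\circ\varphi_i$ and $\varphi_j$ are two isogenies $A\to A_j$; since $A$ has CM by $\mathcal{O}_K$ with $|\mathcal{O}_K^\times|=2$, the isogeny $g\varphi_i$ differs from $\varphi_j$ by post-composition with an element of $\mathcal{O}_K\otimes\mathbb{Q}$ acting on $A_j$, but more usefully both have the same image and there is an isogeny relating them whose degree is $\deg(g)\deg(\varphi_i)/\deg(\varphi_j)$. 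Concretely, the image $(g\times I)_*\mathrm{graph}(\varphi_i)$ is the graph of the isogeny $g\varphi_i\varphi_j^{-1}$ (an isogeny $A_j\to A_j$, i.e.\ a CM endomorphism up to isogeny) composed with the graph of $\varphi_j$; comparing cycle classes in $NS(A_j\times A)$, the graph of $g\varphi_i$ equals $(\deg(g)\deg(\varphi_i)/\deg(\varphi_j))\cdot$(something homologous to $\mathrm{graph}(\varphi_j)$) modulo the degenerate classes $0\times A$ and $A_j\times 0$. The cleanest route is: in $NS(A_j\times A)\otimes\mathbb{Q}$, any curve class is determined by its intersection numbers with $0\times A$, $A_j\times 0$, and (for the graph part) its class modulo these; compute $\mathrm{graph}(g\varphi_i)\cdot(A_j\times 0) = \deg(g)\deg(\varphi_i)$ and $\mathrm{graph}(g\varphi_i)\cdot(0\times A)=1$, and likewise $\mathrm{graph}(\varphi_j)\cdot(A_j\times 0)=\deg(\varphi_j)$, $\mathrm{graph}(\varphi_j)\cdot(0\times A)=1$, then solve for the coefficient.

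Putting the pieces together: $(g\times I)_* D_{A_i}$ should be, up to the degenerate terms, $\deg(g)\,\mathrm{graph}(\varphi_i)$-image minus $\deg(g)(0\times A)$ minus $\deg(g)\deg(\varphi_i)(A_j\times 0)$, and one rewrites the graph-image as $(\deg(g)\deg(\varphi_i)/\deg(\varphi_j))\,\mathrm{graph}(\varphi_j)$ modulo degenerate pieces; a short bookkeeping check then shows all the degenerate contributions assemble exactly into $\deg(g)\,\frac{\deg(\varphi_i)}{\deg(\varphi_j)}\bigl(-0\times A - \deg(\varphi_j)(A_j\times 0)\bigr)$, giving $(g\times I)_*D_{A_i} = \deg(g)\,\frac{\deg(\varphi_i)}{\deg(\varphi_j)}\,D_{A_j}$ as claimed. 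I expect the main obstacle to be the careful normalization of pushforward of the graph class in the Néron–Severi group — keeping track of which class is homologous to which modulo $\{0\times A, A_j\times 0\}$ and getting the rational coefficient exactly right, rather than any deep input; it is essentially the same linear-algebra-in-$NS$ argument as in \cite[Paragraph~II.3.2]{nekovavr1995thep}, adapted to allow the extra isogeny $g$.
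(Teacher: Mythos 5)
Your proposal takes a genuinely different route from the paper, and unfortunately the route as sketched has both local errors and a structural gap.

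\textbf{What the paper does.} The paper does not push $D_{A_i}$ forward term by term at all. It first asserts (from the fact that $(g\times I)_*$ respects the subgroup of $NS$ generated by CM cycles) that $(g\times I)_*D_{A_i}=k\,D_{A_j}$ for \emph{some} scalar $k>0$; the whole content of the proof is then to pin down $k$. This is done indirectly, via the self-intersection: the paper computes $D_{A_i}\cdot D_{A_i}$ and $D_{A_j}\cdot D_{A_j}$ explicitly, uses the projection-formula identity $(g\times I)_*D_{A_i}\cdot(g\times I)_*D_{A_i}=\deg(g)^2\,D_{A_i}\cdot D_{A_i}$, and equates to solve for $k^2$ and hence $k$. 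No comparison of graph classes in $NS(A_j\times A)$ is ever made.

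\textbf{Where your term-by-term plan goes wrong.} First, the pushforwards of the fiber classes are reversed. The restriction of $g\times I$ to $\{0\}\times A$ is the identity on the second factor, a degree-one map onto $\{0\}\times A\subset A_j\times A$, so $(g\times I)_*(0\times A)=0\times A$, not $\deg(g)\,(0\times A)$. Conversely, the restriction to $A_i\times\{0\}$ is $g$ itself, a degree-$\deg(g)$ map onto $A_j\times\{0\}$, so $(g\times I)_*(A_i\times 0)=\deg(g)\,(A_j\times 0)$. (Your long aside about the convention for $g_*[A_i]$ resolves correctly at the very end, but the confusion propagates into what you actually use.) Second, your intersection numbers for the pushed-forward graph are swapped: $\mathrm{graph}(g\varphi_i)\cdot(0\times A)$ counts $x$ with $g\varphi_i(x)=0$, which is $\deg(g)\deg(\varphi_i)$, while $\mathrm{graph}(g\varphi_i)\cdot(A_j\times 0)$ counts $x$ with $x=0$, which is $1$ — the opposite of what you wrote.

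\textbf{The structural gap.} More seriously, the assertion that a curve class in $NS(A_j\times A)\otimes\mathbb{Q}$ is determined by its intersection numbers with $0\times A$ and $A_j\times 0$ (plus "its class modulo these") is not enough. Since $A$ and $A_j$ are isogenous CM elliptic curves, the primitive part of $NS(A_j\times A)\otimes\mathbb{Q}$ is identified with $\Hom(A,A_j)\otimes\mathbb{Q}$, which is two-dimensional over $\mathbb{Q}$. So matching the two intersection numbers with the fibers cannot, by itself, exhibit $\mathrm{graph}(g\varphi_i)$ as a rational linear combination of $\mathrm{graph}(\varphi_j)$ and the two fiber classes. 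Making this step precise requires exactly the CM-cycle input that the paper invokes up front to get the a priori proportionality $(g\times I)_*D_{A_i}=k\,D_{A_j}$; without it, your "short bookkeeping check" does not close. If you want to save the term-by-term approach you would need to show, using the $\mathcal{O}_K$-module structure on $\Hom(A,A_j)$, that $g\varphi_i$ and $\varphi_j$ are $\mathbb{Q}$-proportional in $\Hom(A,A_j)\otimes\mathbb{Q}$, and only then solve for the coefficient via intersection numbers. That is substantially more work than the paper's two-line self-intersection argument.
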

\begin{proof}
We denote the intersection pairing of two divisors by a dot. We have
$$(g \times I )_{*} D_{A_i} \cdot (g \times I )_{*} D_{A_i} = \deg(g)^2 D_{A_i} \cdot D_{A_i},$$
where 
\begin{align*}
& D_{A_i} \cdot D_{A_i}\\
& = (\mathrm{graph}(\varphi_i)- 0 \times A -\deg(\varphi_i) A_i\times 0) \cdot (\mathrm{graph}(\varphi_i)- 0 \times A -\deg(\varphi_i) A_i\times 0) \\
& = \mathrm{graph}(\varphi_i)\cdot \mathrm{graph}(\varphi_i) + 0 \times A \cdot  0 \times A +  \deg(\varphi_i) A_i\times 0 \cdot \deg(\varphi_i) A_i\times 0\\
& -2 \mathrm{graph}(\varphi_i)\cdot 0 \times A -2 \mathrm{graph}(\varphi_i)\cdot \deg(\varphi_i) A_i\times 0 + 2  \deg(\varphi_i) A_i\times 0  \cdot 0 \times A\\
& = 0+0+0 -2 \deg^2(\varphi_i)-2 \deg(\varphi_i)+2 \deg(\varphi_i)\\
& =- 2 \deg^2 (\varphi_i).
\end{align*}
In the previous computation, the equality $\mathrm{graph}(\varphi_i)\cdot \mathrm{graph}(\varphi_i)=0$ follows from the implication 
$$(x, \varphi_i(x))=(x, \varphi_i(x)+P ) \implies P=0$$ for a translation of $\varphi_i(x)$ by some quantity $P$.
Hence, $$(g \times I )_{*} D_{A_i} \cdot (g \times I )_{*} D_{A_i}=-2 \deg(g)^2 \deg^2 (\varphi_i).$$
The induced map $$(g \times I )_{*}: NS(A_i \times A) \longrightarrow NS(A_j \times A)$$ respects the subgroups generated by complex multiplication cycles.
Then, since $$(g \times I )_{*} D_{A_i} = k D_{A_j},$$ where $A_j = g(A_i)$ and $k>0$,
we have $$(g \times I )_{*} D_{A_i} \cdot (g \times I )_{*} D_{A_i}= k^2 D_{A_j} \cdot D_{A_j}=-2k^2\deg^2 (\varphi_j).$$
The equality $-2 \deg(g)^2 \deg^2 (\varphi_i)= -2k^2\deg^2 (\varphi_j)$ then implies that $$k=\deg(g)\dfrac{ \deg (\varphi_i)}{\deg (\varphi_j)},$$ and $$(g \times I )_{*} D_{A_i} =\deg(g)\dfrac{ \deg (\varphi_i)}{\deg (\varphi_j)} D_{A_j}.$$
\end{proof}

\section{Euler system properties}
We study certain global and local norm compatibilities of generalized Heegner cycles satisfying the properties of Euler systems.
We have $\mathcal{O}_{F} \otimes \mathbb{Z}_p = \oplus_{\wp_i |p} \mathcal{O}_{F,\wp_i}$ where $\mathcal{O}_{F,\wp_i}$ is the completion of $\mathcal{O}_F$ at the prime $\wp_i$ dividing $p$.
Recall that $V_{\wp}=(V_{f} \otimes_{\mathcal{O}_{F}} V_\psi)_{\wp}$ where $\wp$ is a prime of $F$ dividing $p$.
For a Galois representation $V$, $$F (V )$$ will designate the smallest extension of $F$ such that $\Gal(\overline{F}/F(V ))$ acts trivially on $V$.
We denote by $\Frob_v(F_1/F_2)$ the conjugacy class of the Frobenius substitution of the prime $v \in F_2$ in $\Gal(F_1/F_2)$ and by $\Frob_\infty(F_1/\mathbb{Q})$ the conjugacy class of the complex conjugation $\tau$ in $\Gal(F_1/\mathbb{Q})$.
A rational prime $\ell$ is called a \emph{Kolyvagin prime} if
\begin{align}\label{Condition2}
(\ell,N D N_A N_\psi p )=1 \ \hbox{ and } \ a_\ell  b_\ell \ \equiv \ \ell+1 \ \equiv \ a_\ell^2-b_\ell^2+4 \ \equiv \ 0 \mod  p.
\end{align}
Let $$L=K(\mu_p)(V_{\wp}/p),$$
where $\mu_p$ is the group of $p$-th roots of unity.
Condition \eqref{Condition2} is equivalent to 
\begin{align} \label{Condition20}
\Frob_\ell \left(L/\mathbb{Q}\right)=\Frob_{\infty} \left(L/\mathbb{Q} \right).
\end{align}
Indeed, it is enough to compare the characteristic polynomial of the complex conjugation $(x^2-1)^2=x^4 -2x^2+1$ acting on $V_{\wp}/p$ with roots $-1$ and $1$, each with multiplicity 2, with the twist by $r+1 $ of the characteristic polynomial of the Frobenius substitution at $\ell$ acting on $V_{\wp}/p$ with roots $$\alpha_1 \alpha_3, \ \ \alpha_1 \alpha_4, \ \ \alpha_2 \alpha_3, \  \hbox{ and } \ \alpha_2 \alpha_4$$
 satisfying
$$\alpha_1 \alpha_2=\ell^{r}, \ \ \alpha_1+\alpha_2=b_\ell, \ \ \alpha_3 \alpha_4 = \ell^{r+1}, \ \ \alpha_3+\alpha_4 = a_{\ell}.$$ 
The characteristic polynomial of $\Frob(\ell)$ acting on $V_{\wp}/p$ is
\begin{align*}
& (x-\alpha_1 \alpha_3)(x - \alpha_1 \alpha_4)(x- \alpha_2 \alpha_3)(x- \alpha_2 \alpha_4) \\
= \ & (x^2 -(\alpha_1 \alpha_3 + \alpha_1 \alpha_4) x + \alpha_1^2 \alpha_3 \alpha_4)(x^2 - (\alpha_2 \alpha_3+\alpha_2 \alpha_4) x + \alpha_2^2 \alpha_3 \alpha_4)  \\
= \ &\left(x^2 - \alpha_1 a_\ell x +\ell^{r+1}\alpha_1^2 \right)\left(x^2 - \alpha_2 a_\ell x + \ell^{r+1} \alpha_2^2 \right)
\end{align*}
We use the equality $(\alpha_1+\alpha_2)^2=\alpha_1^2+\alpha_2^2+2 \alpha_1 \alpha_2$ that is $b_\ell^2 - 2 \ell^r =\alpha_1^2+\alpha_2^2$ to conclude that the latter equals
\begin{align*}
& x^4 - \left(\alpha_2 a_\ell + \alpha_1 a_\ell \right)x^3 + \left(\ell^{r+1}\alpha_1^2+\ell^{r+1} \alpha_2^2
+ \alpha_1 \alpha_2 a_\ell^2 \right)x^2 
\\ &
-\ell^{r+1}\left( \alpha_1 a_\ell \alpha_2^2+\alpha_2  a_\ell \alpha_1^2 \right)x  + \ell^{2r+2}\alpha_1^2 \alpha_2^2\\
= \ & x^4 - a_\ell b_\ell x^3 +\left( \ell^{r+1}b_\ell^2 -2 \ell^{2r+1}+ a_\ell^2 \ell^r \right)x^2 - \ell^{2r+1} a_\ell(\alpha_1+\alpha_2)x+\ell^{4r+2}\\
= \ & x^4 -a_\ell b_\ell  x^3 +\left(\ell^{r+1}b_\ell^2 -2 \ell^{2r+1}+ a_\ell^2 \ell^r \right)x^2 - \ell^{2r+1} b_\ell a_\ell x +\ell^{4r+2}.
\end{align*}
To twist this characteristic polynomial by $\ell^{r+1},$ it is enough to map $x \mapsto \ell^{r+1}x$. We obtain
\begin{align*}
& \ell^{4r+4} x^4 -a_\ell b_\ell  \ell^{3r+3}x^3 +\ell^{2r+2}\left(\ell^{r+1}b_\ell^2 -2 \ell^{2r+1}+ a_\ell^2 \ell^r \right)x^2 - \ell^{3r+2} b_\ell a_\ell x +\ell^{4r+2} \\
& =\ell^{4r+4}\left(x^4 -\dfrac{a_\ell b_\ell}{  \ell^{r+1}}x^3 +\dfrac{\ell^{r+1}b_\ell^2 -2 \ell^{2r+1}+ a_\ell^2 \ell^r }{\ell^{2r+2}}x^2 - \dfrac{ b_\ell a_\ell}{\ell^{r+2}}x +\dfrac{1}{\ell^{2}}  \right).
\end{align*}
On the one hand, using the congruences $$a_\ell  b_\ell \ \equiv \ \ell+1 \ \equiv \ a_\ell^2-b_\ell^2+4 \ \equiv \ 0 \mod  p,$$ we find that the characteristic polynomial $$x^4-2x^2+1$$ of the complex conjugation $\tau$ acting on $V_{\wp}/p$ is congruent to the characteristic polynomial of $\Frob(\ell)$ acting on $V_{\wp}/p$.
On the other hand, comparing the action of the Frobenius element $\Frob_\ell$ and the complex conjugation $\tau$ on $\zeta_p$, where $\zeta_p$ is a $p$-th root of unity, we obtain
$$\zeta_p^{\ell}= \Frob_{\ell} (\zeta_p) = \Frob_\infty(\zeta_p)= \zeta_p^{-1}.$$
This implies that $\ell \equiv -1 \mod p$. As a consequence, Condition \eqref{Condition2} is necessary to satisfy Equality \eqref{Condition20}.

Let $n=\ell_1 \cdots \ell_k$ be a squarefree integer where $\ell_i$ is a Kolyvagin prime for $i=1,\cdots,k$.  
The Galois group $\Gal(K_n/K_1)$ is the product over the primes $\ell$ dividing $n$ of the cyclic groups $$G_{\ell}=\Gal(K_{\ell}/K_1)$$ of order $\ell+1$. We denote by $\sigma_{\ell}$ a generator of $G_{\ell}$.
The extensions $L_1$ and $K_{n}$ are disjoint over $K_1$. Indeed, if $I= L_1 \cap K_n$ is a non-trivial extension of $K_1$, then $\Gal(I/K_1) = \prod_{\ell_{i_j}} \Gal(K_ {\ell_{i_j}}/K)$ for some $\ell_{i_j}$ dividing $n$. This implies that the primes $\ell_{i_j}$ ramify in $I$ and hence also in $L_1$, a contradiction since $(n,N)=1$.
Hence $$G_n = \Gal(L_n/L_1) \simeq \Gal(K_n/K_1).$$

The Frobenius condition on $\ell$ implies that it is inert in $K$. Denote by $\lambda$ the unique prime in $K$ above $\ell$. Writing $n$ as $n=\ell m$, we have that $\lambda$ splits completely in $L_m$ since it is unramified in $L_m$ and has the same image as 
$\Frob_{\infty}(L/K)=\tau^2=Id$ by the Artin map. A prime $\lambda_m$ of $L_m$ above $\lambda$ ramifies completely in $L_n$. We denote by $\lambda_n$ the unique prime in $L_n$ above $\lambda_m$.
Consider the image of $\Delta_{\varphi_n}$ by the Abel-Jacobi map $$\Phi_{f, \psi, L_n}: e_r (\CH^{r+1}(W/L_n)_0 \otimes \mathbb{Z}_p) \longrightarrow H^1(L_n,V).$$ 

\begin{proposition}\label{Hecke2}
Consider $(A_n, \varphi_n)\sim (A_m, \varphi_m) \in Isog^{\mathcal{N}}(A)$ where $n=\ell m$ for an odd prime $\ell$.
Then $$T_{\ell} \ \Phi_{f, \psi, L_m}(\Delta_{\varphi_m})= \cor_{L_n,L_m} \Phi_{f, \psi, L_n}(\Delta_{\varphi_n})= a_{\ell} b_\ell \Phi_{f, \psi, L_m}(\Delta_{\varphi_m}).$$
\end{proposition}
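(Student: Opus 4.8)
The plan is to establish the two equalities in Proposition~\ref{Hecke2} separately, since they are of somewhat different nature. The middle equality $\cor_{L_n,L_m}\Phi_{f,\psi,L_n}(\Delta_{\varphi_n}) = a_\ell b_\ell\,\Phi_{f,\psi,L_m}(\Delta_{\varphi_m})$ is the genuine ``horizontal'' Euler system relation, while the first equality $T_\ell\,\Phi_{f,\psi,L_m}(\Delta_{\varphi_m}) = \cor_{L_n,L_m}\Phi_{f,\psi,L_n}(\Delta_{\varphi_n})$ is essentially a geometric statement about how the Hecke correspondence $T_\ell$ decomposes the isogeny graph. First I would recall that by the Heegner/Frobenius condition $\ell$ is inert in $K$, so the prime $\lambda$ above $\ell$ splits completely in $L_m$ and is totally ramified in $L_n/L_m$, with $\Gal(L_n/L_m)\simeq G_\ell$ cyclic of order $\ell+1$; thus $\cor_{L_n,L_m} = \sum_{\sigma\in G_\ell}\sigma$ on cohomology. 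The key input for the first equality is that the $\ell+1$ pairs $(\varphi_n^{(\sigma)},A_n^{(\sigma)})$, as $\sigma$ ranges over $G_\ell$ (equivalently, as $A_n$ ranges over the elliptic curves $\ell$-isogenous to $A_m$ via a $\lambda$-isogeny compatible with the level structure), are precisely the target curves appearing in the Hecke correspondence $T_\ell$ applied to $A_m$. Because the generalized Heegner cycle $\Delta_{\varphi}=e_r\,\mathrm{graph}(\varphi)^r$ is functorial in the isogeny and the Abel--Jacobi map commutes with correspondences (as noted before the statement of $\Phi_{f,\psi,L_n}$), pushing $\Delta_{\varphi_n}$ down along the $\ell+1$ conjugates and summing reproduces exactly $T_\ell\,\Phi_{f,\psi,L_m}(\Delta_{\varphi_m})$. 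I would make this precise by tracking the correspondence $T_\ell$ on $W=W_r\times A^r$ through the isomorphism $\mathrm{graph}(\varphi)^r\subset (A\times A_i)^r\simeq (A_i)^r\times A^r$ and invoking compatibility of the Abel--Jacobi map with the $\mathbb{T}[\Gal(L_n/\mathbb{Q})]$-action, following the adaptation of \cite[Proposition~2.7]{bertolini2013generalized} and Nekov{\'a}{\v{r}}'s argument in \cite{nekovar1992kolyvagin}.

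For the second equality I would use that $V=\widetilde V/I\widetilde V$ is the $(f,\psi)$-isotypic quotient on which $T_\ell$ acts as the scalar $a_\ell b_\ell$ by the very definition of the ideal $I=\Ker\{T_\ell\mapsto a_\ell b_\ell\}$. Since $\Phi_{f,\psi,L_m}$ lands in $H^1(L_m,V)$ and is $\mathbb{T}$-equivariant, the operator $T_\ell$ on the target acts as multiplication by $a_\ell b_\ell$; combined with the first equality this gives $\cor_{L_n,L_m}\Phi_{f,\psi,L_n}(\Delta_{\varphi_n}) = T_\ell\,\Phi_{f,\psi,L_m}(\Delta_{\varphi_m}) = a_\ell b_\ell\,\Phi_{f,\psi,L_m}(\Delta_{\varphi_m})$. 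Here one should be careful that the cycle-level relation produces $T_\ell$ acting via a correspondence on $e_r\CH^{r+1}(W/L_m)_0$, which only becomes the scalar $a_\ell b_\ell$ after applying the projection $m\colon\widetilde V\to V$; so the cleanest organization is to prove the cycle-level identity $\cor_{L_n,L_m}\,\mathrm{cl}(\Delta_{\varphi_n}) = T_\ell\cdot\mathrm{cl}(\Delta_{\varphi_m})$ inside $H^1(L_m,e_r H^{2r+1}_{et}(W\otimes\overline{\mathbb{Q}},\mathbb{Z}_p(r+1)))$ first, and only then push everything forward to $V$ where $T_\ell=a_\ell b_\ell$.

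The main obstacle I expect is the first equality: one must verify that the Hecke correspondence $T_\ell$, when restricted to the generalized Heegner cycle data and transported through the identification $(A\times A_i)^r\simeq (A_i)^r\times A^r$, really does decompose as the sum over $\Gal(L_n/L_m)$-conjugates of $\Delta_{\varphi_n}$ with the correct multiplicities and no stray contributions at the cusps or from the degeneracy maps — this requires care because $T_\ell$ is defined via the two degeneracy maps on the modular curve $X_1(N)$ and one must check the level structures and the factor $A^r$ transform correctly. A secondary technical point is ensuring that $(A_n,\varphi_n)$ and $(A_m,\varphi_m)$ being $\mathcal{N}$-compatible (i.e.\ in $Isog^{\mathcal{N}}(A)$) guarantees that all $\ell+1$ isogenies of degree $\ell$ out of $A_m$ land back in $Isog^{\mathcal{N}}(A)$, which uses $(\ell,N)=1$ so that $\ker$ of an $\ell$-isogeny meets $A[\mathcal N]$ trivially. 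Both of these are handled by adapting the corresponding arguments of Nekov{\'a}{\v{r}} \cite{nekovar1992kolyvagin} and of Bertolini--Darmon--Prasanna \cite{bertolini2013generalized} to the present setting where the auxiliary factor $A^r$ and the CM projector $e_A$ are present, with the observation that $T_\ell$ acts only on the $W_r$-component and commutes with $e_A$.
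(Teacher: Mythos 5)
Your proposal follows essentially the same route as the paper: the first (geometric) equality comes from decomposing $T_\ell \Delta_{\varphi_m}$ as the sum of generalized Heegner cycles over the $\ell+1$ curves $\ell$-isogenous to $A_m$, identified with the $\Gal(L_n/L_m)$-conjugates of $\Delta_{\varphi_n}$, and the second equality comes from $T_\ell$ acting as the scalar $a_\ell b_\ell$ on the isotypic quotient $V$ together with the Hecke-equivariance of the Abel--Jacobi map. The only practical difference is that where you propose to re-derive the decomposition of $T_\ell$ on Heegner cycle data by tracking the correspondence through the modular interpretation, the paper instead invokes \cite[corollary~11.4]{Schoen1995computation} directly; your remark that the cycle-level identity should be established first and then pushed to $V$ is exactly the organization the paper uses.
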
 
\begin{proof}
By \cite[corollary~11.4]{Schoen1995computation}, $$T_{\ell} (\Delta_{\varphi_m})= \sum_{n_i } \Delta_{\varphi_{n_i} },$$ where the generalized Heegner cycles $\Delta_{\varphi_{n_i} }$ correspond to elements $$(A_{n_i}, \varphi_{n_i}) \sim (A_{m}, \varphi_{m})$$ in $Isog^{\mathcal{N}}(A)$ for elliptic curves $A_{n_i}$ that are $\ell$-isogenous to $A_m$ respecting level $N$ structure.
These elliptic curves $A_{n_i}$ correspond to $g A_m$ where $$g \in \Gal(L_n/L_m) \simeq \Gal(K_{n}/K_{m}) \simeq \Gal(K_{\ell}/K_1).$$
Hence $$\sum_{n_i } \Delta_{\varphi_{n_i} }= \sum_{g \in \Gal(L_n/L_m)} g \Delta_{\varphi_n} = \cor_{L_n,L_m} (\Delta_{\varphi_n})=a_{\ell} b_\ell \Delta_{\varphi_m},$$
where the last equality follows from the action of $T_{\ell}$ on $V$.
Finally, we apply the p-adic Abel-Jacobi map which commutes with $T_\ell$ to obtain $T_{\ell} \ \Phi_{f, \psi, L_m}(\Delta_{\varphi_m})= \cor_{L_n,L_m} \Phi_{f, \psi, L_n}(\Delta_{\varphi_n})$.
\end{proof}
For an element $c \in H^1(F,M)$, we denote by $\res_v(c) \in H^1(F_v,M)$ the image of $c$ by
the restriction map $H^1(F,M) \longrightarrow H^1(F_v,M)$ induced from the inclusion $$\Gal(\overline{F_v}/F_v) \hookrightarrow \Gal(\overline{F}/F).$$
\begin{proposition}\label{LocalRelation2}
Consider $(A_n, \varphi_n) \sim (A_m, \varphi_m) \in Isog^{\mathcal{N}}(A)$ where $n=\ell m$.
Then $$\res_{\lambda_n} \Phi_{f, \psi, L_n}(\Delta_{\varphi_n}) = k \ \Frob_{\ell}(L_n/L_m) \ \res_{\lambda_m} \Phi_{f, \psi, L_m}(\Delta_{\varphi_m}) $$ for $k=\ell  \ \dfrac{ \deg (\varphi_i)}{\deg (\varphi_j)}  $.
\end{proposition}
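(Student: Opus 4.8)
The plan is to reduce the claimed identity to a comparison, carried out on the special fibre of $W$ at $\lambda$, of the reductions modulo $\lambda$ of the two generalized Heegner cycles, and then to feed that comparison through Lemma~\ref{Divisors2}. First I note that the Kolyvagin condition \eqref{Condition2} forces $\ell$ to be inert in $K$; since $(\ell,NDN_AN_\psi p)=1$, the variety $W=W_r\times A^r$ has good reduction at $\lambda$, hence also over $L_m$ at $\lambda_m$ and over $L_n$ at $\lambda_n$, and because $L_n/L_m$ is totally ramified at $\lambda_m$ the residue fields at $\lambda_m$ and $\lambda_n$ coincide; denote this field by $\kappa$. As $\lambda_n\nmid p$ and $W/L_n$ has good reduction there, the class $\res_{\lambda_n}\Phi_{f,\psi,L_n}(\Delta_{\varphi_n})$ lies in the unramified part of $H^1(L_{n,\lambda_n},V)$; by the reduction compatibility of the $p$-adic \'etale Abel--Jacobi map (adapting Nekov\'a\v{r} \cite{nekovar1992kolyvagin} to the coefficient sheaf of Proposition~\ref{Etale2}), this class is obtained by lifting, through the Hochschild--Serre spectral sequence for $\overline{W}_{\kappa}$, the cycle class of the reduction $\overline{\Delta}_{\varphi_n}$ of $\Delta_{\varphi_n}$, which is homologically trivial on $\overline{W}$ because $\Delta_{\varphi_n}$ is so on $W$ (cf.\ \cite[proposition~2.7]{bertolini2013generalized}); likewise $\res_{\lambda_m}\Phi_{f,\psi,L_m}(\Delta_{\varphi_m})$ is built from $\overline{\Delta}_{\varphi_m}$. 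Since both reductions live over the same $\kappa$, it suffices to express $\overline{\Delta}_{\varphi_n}$ in terms of $\overline{\Delta}_{\varphi_m}$.

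The geometric ingredient I would invoke is Deuring's description of reductions of CM curves. As $\ell$ is inert in $K$, the reduction $\overline{A}$ of $A$ at $\lambda$ is supersingular, and the reduction of $A_n=A_{\ell m}$, which has complex multiplication by the order of conductor $\ell m$, is the Frobenius twist of the reduction of $A_m$ (see \cite[Section~11]{Gross1980arithmetic}): the degree-$\ell$ isogeny $g\colon A_m\to A_n$ provided by the Hecke correspondence $T_\ell$ of Proposition~\ref{Hecke2} reduces modulo $\lambda$ to the relative Frobenius of $\overline{A}_m$, and $\overline{\varphi}_n=\overline{g}\circ\overline{\varphi}_m$ up to isomorphism. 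Hence $\overline{D}_{A_n}$ is proportional to $(\overline{g}\times I)_*\overline{D}_{A_m}$ in $NS(\overline{A}_n\times\overline{A})$, and Lemma~\ref{Divisors2}, applied to $g$ with $\deg(g)=\ell$, makes the multiplicity explicit:
\[
(\overline{g}\times I)_*\,\overline{D}_{A_m}=\ell\,\frac{\deg(\varphi_i)}{\deg(\varphi_j)}\,\overline{D}_{A_n}=k\,\overline{D}_{A_n}.
\]
Raising to the $r$-th power, using the identity $\Delta_{\varphi_i}=D_{A_i}^r$ (adapting \cite[Paragraph~II.3.2]{nekovavr1995thep}) and the compatibility of the projector $e_r$ with pushforward, one obtains the corresponding identity relating $\overline{\Delta}_{\varphi_n}$ to the Gysin pushforward of $\overline{\Delta}_{\varphi_m}$ along the $r$-fold product of $\overline{g}$.

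Finally, I would transport this relation back through the lifting map of the first step. Over $\overline{\kappa}$ the twist $\overline{A}_n=\overline{A}_m^{(\ell)}$ becomes isomorphic to $\overline{A}_m$, and under this isomorphism $\overline{g}$ is identified with the $\ell$-power geometric Frobenius endomorphism; hence the Gysin pushforward along $\overline{g}$ acts on the relevant graded piece of $H^{2r-1}$ as a controlled power of $\ell$ times a power of the geometric Frobenius $\Frob_\ell(L_n/L_m)$, and absorbing these $\ell$-powers into the multiplicity already produced by Lemma~\ref{Divisors2} yields
\[
\res_{\lambda_n}\Phi_{f,\psi,L_n}(\Delta_{\varphi_n})=k\,\Frob_\ell(L_n/L_m)\,\res_{\lambda_m}\Phi_{f,\psi,L_m}(\Delta_{\varphi_m}),
\]
as claimed.

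The main obstacle I anticipate is the first step: making rigorous, for the four-dimensional motive $V$ of Proposition~\ref{Etale2} and the $p$-adic \'etale Abel--Jacobi map on $W$, that $\res_{\lambda_n}\Phi_{f,\psi,L_n}$ indeed lands in the unramified part of the local cohomology (as already asserted in the definition of the Selmer group $S$) and is computed from the special fibre via Hochschild--Serre --- this requires transplanting Nekov\'a\v{r}'s smooth-proper base-change and weight arguments to the present coefficients --- together with pinning down the Deuring identification $\overline{\varphi}_n=\mathrm{Frob}_\ell\circ\overline{\varphi}_m$ compatibly with the $\Gamma_1(N)$-level structure and the precise normalization of the $\ell$-powers entering $k$ and $\Frob_\ell$. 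The remaining bookkeeping, namely the degree ratio $\deg(\varphi_i)/\deg(\varphi_j)$, is then a direct application of Lemma~\ref{Divisors2}.
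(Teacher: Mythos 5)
Your proposal takes essentially the same approach as the paper's proof: use the total ramification of $\lambda_m$ in $L_n$ to identify residue fields, deduce that the reductions of $A_m$ and $A_n$ are supersingular since $\ell$ is inert in $K$, identify the $\ell$-isogeny $A_m\to A_n$ with Frobenius on reduction, and then invoke Lemma~\ref{Divisors2} with $\deg(g)=\ell$ to get the multiplier $k$. Your write-up merely makes explicit (via Hochschild--Serre and the reduction compatibility of the $p$-adic Abel--Jacobi map) the step the paper compresses into "from which the result follows."
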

\begin{proof} 
Since $\lambda_m$ completely ramifies in $L_n$, we have $$\mathcal{O}_{L_m}/\lambda_m \simeq \mathcal{O}_{L_n}/\lambda_n,$$ which is isomorphic to the finite field with $\ell^2$ elements as $\ell$ is inert in $K$.
As a consequence, the reductions of the elliptic curves $A_n$ and $A_m$ at $\lambda_n$ and $\lambda_m$ are supersingular.
Hence the $\ell$-isogeny from $A_n$ to $ A_m$ reduces to the Frobenius morphism $\Frob_\ell$.
Therefore, we have $\Frob_\ell (A_m) \equiv  A_n \mod \lambda_n$.
By Proposition \ref{Divisors2}, this implies $$(\Frob_\ell \times I)_* D_{A_m} \equiv k \ D_{A_n} \mod \lambda_n$$ where $k=\ell \ \dfrac{ \deg (\varphi_i)}{\deg (\varphi_j)}  $ from which the result follows.
\end{proof}

\section{Kolyvagin cohomology classes}

We denote by $$\Phi_{f,\psi,L_n}(\Delta_{\varphi_n})_{\wp} \in H^1(L_n,V_{\wp})$$ the image of $\Phi_{f,\psi,L_n}(\Delta_{\varphi_n}) \in H^1(L_n,V)$ by the map $H^1(L_n,V) \longrightarrow H^1(L_n,V_{\wp})$ induced by the projection $V \rightarrow V_{ \wp}$. 
Let $$y_{\varphi_n} = \mathrm{red}(\Phi_{f,\psi,L_n}(\Delta_{\varphi_n})_{\wp}) \in H^1(L_n,V_{\wp}/p)$$ be the image of $\Phi_{f,\psi,L_n}(\Delta_{\varphi_n})_{\wp} \in H^1(L_n,V_{\wp})$ by the map $H^1(L_n,V_{\wp}) \longrightarrow H^1(L_n,V_{\wp}/p)$ induced by the projection $V_{\wp} \rightarrow V_{\wp}/p$.
We use certain operators \eqref{operator2} defined by Kolyvagin in order to lift the cohomology classes $y_{\varphi_n} \in H^1(L_n,V_{\wp}/p)$ to Kolyvagin cohomology classes $P(\varphi_n) \in H^1(K,V_{\wp}/p)$, for appropriate $n$.

\begin{lemma} \label{GaloisGroup2}
For all $n$, 
\begin{align*}
&H^0(L_n, V_{\wp}/p)=H^0(L_1,V_{\wp}/p)=0 \\ 
 \hbox{and} \ \ & \Gal(L_n(V_{\wp}/p)/L_n) \simeq \Gal(L_1(V_{\wp}/p)/L_1)
\end{align*}
\end{lemma}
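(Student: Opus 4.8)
The two assertions in Lemma~\ref{GaloisGroup2} will follow from the same underlying fact: the extension $L_n/L_1$ is ramified only at primes dividing $n$ and at the archimedean places, while $L_1(V_{\wp}/p)/L_1$ is ramified only at primes dividing $pNN_AN_\psi$; since $n$ is a product of Kolyvagin primes, which by Condition~\eqref{Condition2} are coprime to $pNN_AN_\psi$, these two extensions are linearly disjoint over $L_1$. The plan is therefore to first establish the linear disjointness, then deduce both the vanishing of $H^0$ and the isomorphism of Galois groups by a standard restriction–inflation (or simply Galois-theoretic) argument.

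\textbf{Step 1: Reduce to $L_1$.} Since $L_n = L_1 K_n$ with $K_n/K_1$ ramified only at primes dividing $n$, and since (as observed in Section~5) $L_1$ and $K_n$ are disjoint over $K_1$, we have $\Gal(L_n/L_1)\simeq\Gal(K_n/K_1)$. The key point is that $L_n(V_{\wp}/p) = L_n \cdot L_1(V_{\wp}/p)$, so it suffices to show $L_n \cap L_1(V_{\wp}/p) = L_1$. The ramification of $L_1(V_{\wp}/p)/L_1$ is confined to primes above $pNN_AN_\psi$ because $V_{\wp}/p$ is a subquotient of the étale cohomology of $W = W_r\times A^r$, which has good reduction away from $NN_AN_\psi$, together with the $p$ in the denominator of the Tate twist; meanwhile $L_n/L_1$ is ramified only at primes dividing $n$. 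As $(n, pNN_AN_\psi)=1$, the intersection $L_n\cap L_1(V_{\wp}/p)$ is unramified everywhere over $L_1$, but it is also contained in the ray class field tower, so by class field theory (or simply because any such subextension of $L_n$ must be ramified at some $\ell\mid n$) it equals $L_1$.

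\textbf{Step 2: Deduce the isomorphism of Galois groups.} Linear disjointness of $L_n$ and $L_1(V_{\wp}/p)$ over $L_1$ gives at once
$$\Gal(L_n(V_{\wp}/p)/L_n)\simeq\Gal(L_1(V_{\wp}/p)/L_1),$$
the restriction map being an isomorphism. This is the second assertion.

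\textbf{Step 3: Deduce $H^0(L_n,V_{\wp}/p)=0$.} First I would show $H^0(L_1,V_{\wp}/p)=0$. The hypotheses of Theorem~\ref{theorem 2} state that $\Gal(\overline{L_1}/L_1)$ does not act trivially on $V_{\wp}/p$ and that $V_{\wp}/p$ is a simple $\Aut_{L_1}(V_{\wp}/p)$-module; a nonzero fixed submodule $H^0(L_1,V_{\wp}/p)$ would be a proper nonzero $\Gal(\overline{L_1}/L_1)$-stable submodule (proper because the action is nontrivial), contradicting simplicity. Hence $H^0(L_1,V_{\wp}/p)=0$. For $L_n$: by Step~2, the image of $\Gal(\overline{L_n}/L_n)$ in $\Aut(V_{\wp}/p)$ equals the image of $\Gal(\overline{L_1}/L_1)$, so $V_{\wp}/p$ has the same invariants over $L_n$ as over $L_1$, namely $0$.

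\textbf{Main obstacle.} The substantive point is Step~1, controlling the ramification of $L_1(V_{\wp}/p)/L_1$ and confirming it is disjoint from the Kolyvagin-prime ramification of $L_n/L_1$; everything after that is formal. One must be slightly careful that $V_{\wp}/p$ — being a tensor product $V_f\otimes V_\psi$ of pieces coming from the Kuga-Sato variety $W_r$ and from $A^r$ — has good reduction away from $NN_AN_\psi$, which is exactly why the prime $p$ was chosen coprime to $ND\phi(N)N_AN_\psi$; the Néron–Ogg–Shafarevich criterion applied to $A$ and the smooth proper base change theorem applied to $W_r$ together supply this, and the disjointness then follows because a Kolyvagin prime $\ell$ is inert in $K$, unramified in $L_1$, and (by the Frobenius condition) ramifies in $L_n/L_m$, so it cannot ramify in the everywhere-unramified-over-$L_1$ field $L_1(V_{\wp}/p)$.
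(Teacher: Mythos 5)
Your proof follows the same approach as the paper: both establish linear disjointness of $L_n$ and $L_1(V_{\wp}/p)$ over $L_1$ by comparing ramification loci (Kolyvagin primes dividing $n$ versus primes dividing $pNN_AN_\psi$), then deduce the Galois-group isomorphism and the vanishing of invariants from simplicity and nontriviality of the action on $V_{\wp}/p$. You spell out a few details the paper leaves implicit — the good-reduction argument controlling the ramification of $L_1(V_{\wp}/p)/L_1$, and the observation that any nontrivial subextension of $L_n/L_1$ must ramify at some $\ell\mid n$ — but the argument is essentially identical.
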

\begin{proof}
The extensions $L_n/L_1$ and $L_1(V_{\wp}/p)/L_1$ are unramified outside primes dividing $n$ and $N_A N_\psi Np$. Therefore, $L_n \cap L_1(V_{\wp}/p)$ is unramified over $L_1$ and is hence contained in $L_1$, the maximal unramified extension of $K$ of conductor $\mathcal{N}$. Hence, $$H^0(L_n, V_{\wp}/p)=H^0(L_1,V_{\wp}/p).$$ 
The result follows from the assumption that $V_{\wp}/p$ is a simple $\Aut_{L_1}(V_{\wp}/p)$-module and $\Gal(\overline{L_1}/L_1)$ does not act trivially on $V_{\wp}/p$.
\end{proof}

\begin{proposition}
The restriction map $$res_{L_1,L_n}: H^1(L_1, V_{\wp}/p) \longrightarrow H^1(L_n, V_{\wp}/p)^{G_n}$$ is an isomorphism.
\end{proposition}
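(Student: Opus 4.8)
The plan is to use the Hochschild--Serre (inflation--restriction) exact sequence for the group extension $\Gal(L_n(V_\wp/p)/L_1) \to \Gal(L_n(V_\wp/p)/L_n)$, together with the vanishing statements established in Lemma~\ref{GaloisGroup2}. Concretely, for the finite Galois extension $L_n/L_1$ with group $G_n = \Gal(L_n/L_1)$, the seven-term (or here just the five-term) inflation--restriction sequence reads
\begin{align*}
0 \longrightarrow H^1(G_n, H^0(L_n,V_\wp/p)) \longrightarrow H^1(L_1,V_\wp/p) \xrightarrow{\ \res_{L_1,L_n}\ } H^1(L_n,V_\wp/p)^{G_n} \longrightarrow H^2(G_n, H^0(L_n,V_\wp/p)).
\end{align*}
By Lemma~\ref{GaloisGroup2}, $H^0(L_n,V_\wp/p)=0$, so both outer terms vanish and $\res_{L_1,L_n}$ is an isomorphism onto $H^1(L_n,V_\wp/p)^{G_n}$. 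That is the whole argument in skeleton form.

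The one point that needs a little care is injectivity versus the precise identification of the image with the $G_n$-invariants: injectivity of inflation always holds, but to land exactly in $H^1(L_n,V_\wp/p)^{G_n}$ (rather than merely in $H^1(L_n,V_\wp/p)$) one uses that the image of an inflated class is automatically $G_n$-invariant, and surjectivity onto the invariants is exactly the vanishing of the transgression target $H^2(G_n,H^0(L_n,V_\wp/p))$, again zero here. An alternative, and perhaps cleaner, route avoiding any mention of $H^2$: the kernel--cokernel of restriction on $H^1$ for a module with no $G_n$-fixed points can be controlled by noting that $H^i(G_n, M)=0$ for all $i$ whenever $M=0$; since $H^0(L_n,V_\wp/p)=0$, the low-degree terms $H^1(G_n,H^0)$ and $H^2(G_n,H^0)$ both vanish trivially. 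I would also remark that $G_n \simeq \Gal(K_n/K_1)$ by the disjointness of $L_1$ and $K_n$ over $K_1$ noted earlier, so $G_n$ is a genuine finite group and the group-cohomology formalism applies.

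The main (and essentially only) obstacle is purely bookkeeping: one must be sure that the hypotheses feeding Lemma~\ref{GaloisGroup2} — namely that $V_\wp/p$ is a simple $\Aut_{L_1}(V_\wp/p)$-module and that $\Gal(\overline{L_1}/L_1)$ acts nontrivially on $V_\wp/p$ — are in force, since these are exactly what guarantees $H^0(L_n,V_\wp/p)=H^0(L_1,V_\wp/p)=0$. Given that, there is no real difficulty: write down the inflation--restriction sequence, quote Lemma~\ref{GaloisGroup2} to kill the first and last terms, and conclude. I would present it in two sentences.

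\begin{proof}
Let $G_n=\Gal(L_n/L_1)$, a finite group (isomorphic to $\Gal(K_n/K_1)$ by the disjointness of $L_1$ and $K_n$ over $K_1$). The inflation--restriction exact sequence associated to the normal subgroup $\Gal(\overline{L_1}/L_n) \trianglelefteq \Gal(\overline{L_1}/L_1)$ acting on $V_\wp/p$ gives
\begin{align*}
0 \longrightarrow H^1\bigl(G_n, H^0(L_n,V_\wp/p)\bigr) \longrightarrow H^1(L_1,V_\wp/p) \xrightarrow{\ \res_{L_1,L_n}\ } H^1(L_n,V_\wp/p)^{G_n} \longrightarrow H^2\bigl(G_n, H^0(L_n,V_\wp/p)\bigr).
\end{align*}
By Lemma~\ref{GaloisGroup2}, $H^0(L_n,V_\wp/p)=0$, hence the two outer terms vanish and $\res_{L_1,L_n}$ is an isomorphism.
\end{proof}
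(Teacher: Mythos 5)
Your proof is correct and is essentially the paper's argument: both invoke the inflation--restriction exact sequence for $G_n=\Gal(L_n/L_1)$ and use Lemma~\ref{GaloisGroup2} to conclude $H^0(L_n,V_\wp/p)=0$, so the two outer terms vanish and the restriction map is an isomorphism.
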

\begin{proof}
Recall that $G_n= \Gal(L_n/L_1)$ and let $G= \Gal(\overline{\mathbb{Q}}/L_n)$. The result follows from the inflation-restriction sequence
\begin{align*}
0 \rightarrow H^1(L_n/L_1,(V_{\wp}/p)^{G}) \xrightarrow{inf} H^1(L_1,V_{\wp}/p)
  \xrightarrow{res} H^1(L_n,V_{\wp}/p)^{G_n} \rightarrow H^2(L_n/L_1,(V_{\wp}/p)^{G}),
\end{align*}
since $H^0(L_n, V_{\wp}/p)=0$ by Lemma \ref{GaloisGroup2}.
\end{proof}
Let 
\begin{align} \label{operator2}
\Tr_{\ell}=\sum_{i=0}^{\ell} \sigma_{\ell}^i, \ \ \ D_\ell= \sum_{i=1}^{\ell} i \sigma_{\ell}^i \ \ \ \hbox{ in } \mathbb{Z}[G_\ell].
\end{align} 
They are related by 
\begin{align}\label{Relation2}
(\sigma_{\ell} -1) D_{\ell} = {\ell}+1-\Tr_{\ell}.
\end{align}
Define $$D_n= \prod_{\ell|n} D_{\ell} \in \mathbb{Z}[G_n].$$
\begin{proposition}\label{Fixed2}
$$D_n y_{\varphi_n} \in H^1(L_n,V_{\wp}/p)^{G_n}.$$ 
\end{proposition}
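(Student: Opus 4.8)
The plan is to show that $D_n y_{\varphi_n}$ is fixed by $G_n$ by working one prime at a time, using the relation \eqref{Relation2} between $D_\ell$ and $\Tr_\ell$ together with the norm-compatibility established in Proposition \ref{Hecke2}. Since $G_n = \prod_{\ell \mid n} G_\ell$ is generated by the $\sigma_\ell$, it suffices to prove that $(\sigma_\ell - 1) D_n y_{\varphi_n} = 0$ in $H^1(L_n, V_\wp/p)$ for each prime $\ell \mid n$. Writing $D_n = D_\ell \cdot D_{n/\ell}$ and using that $D_{n/\ell}$ commutes with $\sigma_\ell$, this reduces to showing $(\sigma_\ell - 1) D_\ell \, z = 0$ where $z = D_{n/\ell} y_{\varphi_n}$; by \eqref{Relation2} we have $(\sigma_\ell - 1) D_\ell \, z = (\ell + 1) z - \Tr_\ell z$, so I must check that $(\ell+1) z = \Tr_\ell z$ in $H^1(L_n, V_\wp/p)$.

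First I would handle the trace term. The operator $\Tr_\ell = \sum_{i=0}^\ell \sigma_\ell^i$ is, up to the identification $G_n \simeq \Gal(L_n/L_1)$, exactly the corestriction from $L_n$ to $L_m$ (where $n = \ell m$) followed by restriction back up, i.e. $\res_{L_m, L_n} \circ \cor_{L_n, L_m}$ acting on classes. Applying Proposition \ref{Hecke2} to the cycle $\Delta_{\varphi_n}$, we get $\cor_{L_n, L_m} \Phi_{f,\psi,L_n}(\Delta_{\varphi_n}) = T_\ell \, \Phi_{f,\psi,L_m}(\Delta_{\varphi_m}) = a_\ell b_\ell \, \Phi_{f,\psi,L_m}(\Delta_{\varphi_m})$. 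Passing to $V_\wp/p$ and applying $\mathrm{red}$, and then restricting back to $L_n$, this yields $\Tr_\ell \, y_{\varphi_n} = a_\ell b_\ell \cdot \res_{L_m,L_n}(y_{\varphi_m})$ up to the standard bookkeeping. Because $n$ is divisible only by Kolyvagin primes, Condition \eqref{Condition2} gives $a_\ell b_\ell \equiv \ell + 1 \equiv 0 \pmod p$; hence in $H^1(L_n, V_\wp/p)$ one has $\Tr_\ell \, y_{\varphi_n} = 0$ and likewise $(\ell+1) y_{\varphi_n} = 0$. Since $D_{n/\ell}$ commutes with $\Tr_\ell$ and with multiplication by $\ell+1$, the same congruences propagate to $z = D_{n/\ell} y_{\varphi_n}$: both $(\ell+1) z$ and $\Tr_\ell z$ vanish mod $p$, so $(\sigma_\ell - 1) D_\ell \, z = 0$.

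Running this for every $\ell \mid n$ shows $(\sigma_\ell - 1) D_n y_{\varphi_n} = 0$ for all such $\ell$, hence $D_n y_{\varphi_n}$ is fixed by the whole group $G_n$, which is the claim. I expect the main obstacle to be the careful transfer of Proposition \ref{Hecke2}, which is phrased at the level of Abel-Jacobi images in $H^1(L_n, V)$ over $\mathcal{O}_F$-coefficients, down to the mod-$p$ classes $y_{\varphi_n} \in H^1(L_n, V_\wp/p)$: one must verify that $\mathrm{red} \circ (\text{projection to } V_\wp)$ commutes with corestriction and with the Hecke operator $T_\ell$, and that the isomorphism $G_n \simeq \Gal(L_n/L_1)$ really identifies $\sum_i \sigma_\ell^i$ with the appropriate composite of restriction and corestriction maps. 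Once those functoriality statements are in place, the congruence $a_\ell b_\ell \equiv \ell + 1 \equiv 0 \pmod p$ from the Kolyvagin condition does all the remaining work, and the argument is a routine application of \eqref{Relation2}.
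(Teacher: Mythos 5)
Your argument is correct and follows the paper's own proof essentially step for step: reduce to $(\sigma_\ell-1)D_n y_{\varphi_n}=0$, apply the relation $(\sigma_\ell-1)D_\ell=\ell+1-\Tr_\ell$, identify $\Tr_\ell$ with $\res_{L_m,L_n}\circ\cor_{L_n,L_m}$, invoke Proposition \ref{Hecke2} for the Hecke/corestriction equality, and then use the Kolyvagin congruences $a_\ell b_\ell\equiv\ell+1\equiv 0\pmod p$ to kill both terms. The functoriality caveat you raise (that reduction mod $p$ and projection to $V_\wp$ commute with corestriction and $T_\ell$) is a fair one, but it is standard and the paper treats it as implicit as well.
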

\begin{proof}
It is enough to show that for all $\ell$ dividing $n$, $$(\sigma_{\ell}-1)D_n y_{\varphi_n} =0.$$
We have $$(\sigma_{\ell}-1)D_n=(\sigma_{\ell}-1)D_{\ell}D_m=({\ell}+1-\Tr_{\ell})D_m,$$ where the last equality follows by Relation $\eqref{Relation2}$.
Since $res_{L_m,L_n} \circ \cor_{L_n,L_m} = \Tr_{\ell}$,
\begin{align*}
& (\ell+1-\Tr_{\ell})D_m \mathrm{red}(\Phi_{f,\psi,L_n}(\Delta_{\varphi_n})_{\wp}) \\
& = (\ell+1)D_m \mathrm{red}(\Phi_{f,\psi,L_n}(\Delta_{\varphi_n})_{\wp})-D_m a_{\ell} b_\ell \mathrm{red}(\Phi_{f,\psi,L_m}(\Delta_{\varphi_m})_{\wp}) \\
& \equiv 0 \hbox{ mod } p.
\end{align*}
by Proposition \ref{Hecke2} and Condition \ref{Condition2}. 
\end{proof}

As a consequence, the cohomology classes $D_n y_{\varphi_n} \in H^1(L_n, V_{\wp}/p)^{G_n}$ can be lifted to cohomology classes $c(\varphi_n) \in H^1(L_1,V_{\wp}/p)$ such that $$\mathrm{res}_{L_1,L_n} c(\varphi_n) =  D_n y_{\varphi_n}.$$ We define $$P(\varphi_n)= \cor_{L_1,K} c(\varphi_n) \hbox{ in } H^1(K, V_{\wp}/p).$$
For a place $v$ of $K$ and a cohomology class $c$ in $H^1(K, V_{\wp}/p)$, we denote by
$res_v(c)$ the image of $c$ by the map $H^1(K, V_{\wp}/p) \longrightarrow H^1(K_v, V_{\wp}/p)$ induced by the inclusion $G_{K_v} \hookrightarrow G_K $. 
\begin{proposition} \label{Ramification2}
Let $v$ be a prime of $L_1$. 
\begin{enumerate}[leftmargin=0cm,itemindent=.5cm,labelwidth=\itemindent,labelsep=0cm,align=left]
\item If $v |N_AN_\psi N$, then $\res_v(P(\varphi_n))$ is trivial. 
\item If $v \nmid N_A N_\psi N np$, then $\res_v(P(\varphi_n))$ lies in $H^1(K_{v}^{ur}/K_{v},V_{\wp}/p)$.
\end{enumerate}
\end{proposition}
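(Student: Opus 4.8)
The plan is to unwind the construction of $P(\varphi_n)$, reduce both parts to a single local statement about the Abel--Jacobi class $y_{\varphi_n}$, and then settle the two cases by a good-reduction argument and a local-vanishing argument, respectively.

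First I would set up the bookkeeping. Recall $P(\varphi_n)=\cor_{L_1,K}c(\varphi_n)$ with $\res_{L_1,L_n}c(\varphi_n)=D_ny_{\varphi_n}$ and $y_{\varphi_n}=\mathrm{red}\big(\Phi_{f,\psi,L_n}(\Delta_{\varphi_n})_{\wp}\big)$. Since corestriction commutes with localisation and carries unramified classes to unramified classes, for the prime $v$ of $K$ below the given prime we have $\res_v(P(\varphi_n))=\sum_{v_1\mid v}\cor_{L_{1,v_1}/K_v}\big(\res_{v_1}(c(\varphi_n))\big)$, and it suffices to show that each $\res_{v_1}(c(\varphi_n))\in H^1(L_{1,v_1},V_{\wp}/p)$ is unramified (in part (1) the group $H^1(K_v^{ur}/K_v,V_{\wp}/p)$ then vanishes by the eigenvalue hypothesis, giving $0$; in part (2) this is exactly the assertion). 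Now in both cases $v$ is coprime to $n$ (Kolyvagin primes avoid $N_AN_\psi N$ and $p$, and $p\nmid N_AN_\psi N$), and $L_n=L_1K_n$ with $K_n/K$ ramified only at primes dividing $n$; hence $L_n/L_1$ is unramified at $v_1$ and, for a prime $w$ of $L_n$ above $v_1$, the inertia subgroups coincide, $I_w=I_{v_1}$. Feeding this into the commutative square relating $\res_{L_{1,v_1},L_{n,w}}$ with restriction to inertia, the vertical inertia map is the identity, so $\res_{v_1}(c(\varphi_n))$ is unramified iff $\res_w(D_ny_{\varphi_n})=\res_w(\res_{L_1,L_n}c(\varphi_n))$ is unramified at $w$. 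Finally, writing $D_n=\sum_{\sigma\in G_n}d_\sigma\sigma$ and using that $G_n=\Gal(L_n/L_1)$ permutes the primes of $L_n$ above $v_1$ transitively and that conjugation preserves inertia, $\res_w(D_ny_{\varphi_n})$ is a $\mathbb{Z}$-linear combination of conjugates of the classes $\res_{w'}(y_{\varphi_n})$, $w'$ running over primes of $L_n$ above $v$. So everything comes down to showing that $y_{\varphi_n}$ is unramified at every prime $w'$ of $L_n$ lying above $v$.

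For part (2), where $v\nmid N_AN_\psi Nnp$, this is immediate: at such a $w'$ the varieties $W_r$ and $A$, hence $W=W_r\times A^r$, have good reduction, $\psi$ is unramified, and $w'\nmid p$, so the $p$-adic \'etale Abel--Jacobi image of the homologically trivial cycle $\Delta_{\varphi_n}$ is unramified at $w'$; hence $\res_{w'}\big(\Phi_{f,\psi,L_n}(\Delta_{\varphi_n})_{\wp}\big)$, and a fortiori $\res_{w'}(y_{\varphi_n})$, lies in $H^1(L_{n,w'}^{ur}/L_{n,w'},V_{\wp}/p)$. Unwinding the bookkeeping gives $\res_v(P(\varphi_n))\in H^1(K_v^{ur}/K_v,V_{\wp}/p)$.

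For part (1), where $v\mid N_AN_\psi N$, the variety $W$ has bad reduction at $w'$, so the good-reduction argument fails and one must argue differently. Here I would use that the $p$-adic \'etale Abel--Jacobi image lies, at every finite place, in the Bloch--Kato finite subspace, together with the running hypothesis that the eigenvalues of $Fr(v)$ on $V_{\wp}^{I_v}$ are $\not\equiv 1\bmod p$: this hypothesis prevents any $p$-torsion (Tamagawa) contribution at $v$, so the finite part of $H^1$ at this place (which is not above $p$) reduces modulo $p$ into the unramified subgroup, forcing $\res_{w'}(y_{\varphi_n})\in H^1(L_{n,w'}^{ur}/L_{n,w'},V_{\wp}/p)$ for all $w'$ above $v$. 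Combined with the bookkeeping this gives $\res_v(P(\varphi_n))\in H^1(K_v^{ur}/K_v,V_{\wp}/p)$, which vanishes by the same eigenvalue hypothesis; hence $\res_v(P(\varphi_n))=0$. The hard part is exactly this bad-place input --- controlling the reduction mod $p$ of the Abel--Jacobi class at the primes dividing $N_AN_\psi N$, where the Kuga--Sato variety degenerates; it requires the local structure there of the isotypic quotient $V_{\wp}$ (the local representations of $f$ and of $A$ cut out by Scholl's idempotent $e_W$ and by $e_A$) together with the eigenvalue hypothesis. Everything else is the formal manipulation of restriction, corestriction and the operators $D_n$ indicated above.
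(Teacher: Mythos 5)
Your treatment of part (2) matches the paper's: both observe that away from $N_A N_\psi N n p$ the relevant varieties have good reduction and the place is coprime to $n$, so the restriction $\res_{v'}D_n y_{\varphi_n}$ dies on inertia and hence $\res_v c(\varphi_n)$, and then $\res_v P(\varphi_n)$, is unramified. The bookkeeping via corestriction and the $D_n$ operators is also consistent with what the paper does.

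For part (1), however, you have taken a detour that is both longer and, as written, contains a genuine gap. You propose to first show that the Abel--Jacobi class is unramified at the bad places $v \mid N_A N_\psi N$ (invoking Bloch--Kato finiteness at these places plus a Tamagawa-number argument), and only then to conclude by observing that $H^1(K_v^{ur}/K_v, V_\wp/p)$ vanishes under the eigenvalue hypothesis. The first step is the problematic one: the claim that the $p$-adic \'etale Abel--Jacobi image of a homologically trivial cycle lies in $H^1_f$ at a place of bad reduction not dividing $p$ is not a free fact --- it requires serious input on the local Galois structure of $V_\wp$ at that place, and you yourself flag it as ``the hard part.'' But the paper shows that no such input is needed, because the entire group $H^1(K_v, V_\wp/p)$ already vanishes for $v \mid N_A N_\psi N$. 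The paper's argument is purely local and cohomological: since $v\nmid p$ the local Euler characteristic gives $|H^1(K_v,V_\wp/p)| = |H^0(K_v,V_\wp/p)|\cdot|H^2(K_v,V_\wp/p)|$, local Tate duality together with the self-duality of $V_\wp/p$ gives $|H^2|=|H^0|$, and the Weil conjectures combined with the hypothesis that the eigenvalues of $Fr(v)$ on $V_\wp^{I_v}$ are $\not\equiv 1 \bmod p$ force $H^0(K_v, V_\wp/p) = ((V_\wp/p)^{I_v})^{Fr(v)} = 0$. Hence $H^1(K_v, V_\wp/p)=0$ and $\res_v(P(\varphi_n))$ is trivially zero --- no information about the geometric origin of the class, and in particular no claim about its ramification behaviour at the bad place, is needed at all. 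You should replace the Bloch--Kato finiteness argument in part (1) with this direct vanishing of the local cohomology group.
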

\begin{proof}
\begin{enumerate}[leftmargin=0cm,itemindent=.5cm,labelwidth=\itemindent,labelsep=0cm,align=left]
\item
We denote by $$V_{\wp}/p^{dual}=\Hom(V_{\wp}/p,\mathbb{Z}/p\mathbb{Z}(1)  )$$ the local Tate dual of $V_{\wp}/p$.
The local Euler characteristic formula \cite[Section~1.2]{milne1986arithmetic} yields $$| H^1(K_v,V_{\wp}/p)|=|H^0(K_v,V_{\wp}/p)|\times |H^2(K_v,V_{\wp}/p)|.$$
Local Tate duality then implies $$| H^1(K_v,V_{\wp}/p)|=|H^0(K_v,V_{\wp}/p)|\times |H^0(K_v,V_{\wp}/p^{dual})|=|H^0(K_v,V_{\wp}/p)|^2$$ as $V_{\wp}/p$ is self-dual.
The Weil conjectures and the assumption on $Fr(v)$ imply that  $$((V_{\wp}/p)^{I_v})^{Fr(v)}=0$$ where $<Fr(v)> \ =\ \Gal(K_v^{ur}/K_v )$ and $I=\Gal(\overline{K_v}/K_v^{ur} )$ is the inertia group. 
Therefore, $$H^0(K_v,V_{\wp}/p)=((V_{\wp}/p)^{I_v})^{Fr(v)}=0.$$
\item
If $v$ does not divide $ N_A N_\psi N np$, then $$res_{L_{1,v},L_{n,v'}} \res_v c(\varphi_n)=\res_{v'} D_n y_{n} \in H^1(\overline{L_{n,v'}}/L_{n,v'},V_{\wp}/p)$$ for $v'$ above $v$ in $L_n$.
The exact sequence $$\cdots \xrightarrow{ } H^1(L^{ur}_{n,v'}/L_{n,v'},(V_{\wp}/p)^{I_v}) \xrightarrow{ } H^1(L_{n,v'},V_{\wp}/p)
\xrightarrow{\res} H^1(\overline{L_{n,v'}}/L^{ur}_{n,v'},V_{\wp}/p) \xrightarrow{ } \cdots $$ allows us to view the cohomology class $\res_{v'} D_n y_{n}$ that belongs to $\Ker(\res)$ as an element in $$ H^1(L^{ur}_{n,v'}/L_{n,v'},V_{\wp}/p).$$
Since $v$ is unramified in $L_n/L_1$, then $\res_v c(\varphi_n)$ belongs to $ H^1(L_{1,v}^{ur}/L_{1,v},V_{\wp}/p)$.
\end{enumerate}
\end{proof}

\section{Global extensions by Kolyvagin classes} 
We consider the restriction $d$ of an element $c$ of $H^1(K, V_{\wp}/p )$ to $H^1(L, V_{\wp}/p )$. Then $d$ factors through some finite extension $\tilde{L}$ of $L$. We denote by $$L(c)=\tilde{L}^{\ker(d)}$$ the subfield of $\tilde{L}$ fixed by $\ker(d)$. Note that $L(c)$ is an extension of $L$. 
In this section, we study extensions of $L$ by Kolyvagin cohomology classes $c$ and $P(\varphi_q)$, where $P(\varphi_q)$ will play a crucial role in the proof of Theorem \ref{theorem 2}.
We recall the statement of the theorem.
\begin{reptheorem}{theorem 2} 
Let $p$ be such that $$ (p,N D\phi(N )N_A )=1.$$
Suppose that $V_{\wp}/p$ is a simple $\Aut_{L_1}(V_{\wp}/p)$-module and suppose that $\Gal(\overline{L_1}/L_1)$ does not act trivially on $V_{\wp}/p$.
Suppose further that the eigenvalues of $Fr(v)$ acting on $V_{\wp}^{I_v}$ are not equal to 1 modulo $p$ for $v$ dividing $N N_A N_\psi$.
If $ P(\varphi_1)^{\psi}$ is non-zero in $H^1(K,V_{\wp}/p )^{\psi}$,
then the $\overline{\psi}$-eigenspace of the Selmer group $S^{\overline{\psi}}$ has rank 1 over $\mathcal{O}_{F, \wp}/p$.
\end{reptheorem}
Recall that $L=K(\mu_p)(V_{\wp}/p)$.
\begin{lemma} \label{dickson}
We have $$H^1(\Aut_{K}(V_{\wp}/p), V_{\wp}/p)=H^1(L/K, V_{\wp}/p)=0.$$
\end{lemma}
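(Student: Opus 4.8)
The plan is to reduce the lemma to one formal step in group cohomology, the genuine input being a large-image property of the mod-$p$ representation on $V_f/p$, supplied by Dickson's classification of subgroups of $\GL_2$ --- which is where the name of the lemma comes from.

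\emph{Reduction of the two cohomology groups to one another.} First I would observe that $\Aut_K(V_\wp/p)=\Gal(K(V_\wp/p)/K)$ with $K(V_\wp/p)\subseteq L=K(\mu_p)(V_\wp/p)$. The subgroup $H=\Gal(L/K(V_\wp/p))$ is normal in $\Gal(L/K)$, acts trivially on $V_\wp/p$, and embeds into $\Gal(K(\mu_p)/K)$, so $|H|$ divides $p-1$; since $V_\wp/p$ is $p$-torsion this gives $H^1(H,V_\wp/p)=\Hom(H,V_\wp/p)=0$, and the inflation--restriction sequence for $H\triangleleft\Gal(L/K)$ produces an isomorphism $H^1(\Aut_K(V_\wp/p),V_\wp/p)\xrightarrow{\ \sim\ }H^1(L/K,V_\wp/p)$. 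So it suffices to prove the left-hand group vanishes.

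\emph{Killing $H^1$ by a central scalar.} I would then use the elementary fact that if a central element $\sigma$ of a group $G$ acts on a $G$-module $M$ by a scalar $\lambda$ with $\lambda-1$ invertible on $M$, then $H^1(G,M)=0$: for a cocycle $c$, centrality of $\sigma$ forces $(\lambda-1)c(g)=(g-1)c(\sigma)$ for every $g$, so $c$ is the coboundary of $(\lambda-1)^{-1}c(\sigma)$. It therefore suffices to exhibit $-\mathrm{Id}$ inside $\Aut_K(V_\wp/p)$: it is central in $\GL(V_\wp/p)$, hence in $\Aut_K(V_\wp/p)$, and it acts by $-1\neq1$ because $p$ is odd (indeed $2\mid\phi(N)$ while $p\nmid\phi(N)$). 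To produce it, use the decomposition $V_\wp/p=V_f/p\otimes V_\psi/p$ and restrict the Galois action to $\Gal(\overline{\mathbb{Q}}/K(V_\psi/p))$, a normal subgroup of $\Gal(\overline{\mathbb{Q}}/\mathbb{Q})$ with solvable quotient that acts trivially on $V_\psi/p$ and hence on $V_\wp/p$ only through the factor $V_f/p$. The hypotheses that $V_\wp/p$ is a simple, non-trivial $\Aut_{L_1}(V_\wp/p)$-module and that $p\nmid ND\phi(N)N_AN_\psi$ exclude the exceptional (reducible, dihedral, projectively exceptional) cases of Dickson's theorem for the mod-$p$ representation on $V_f/p$, so its image contains a copy of $\mathrm{SL}_2$ over a residue ring; being quasi-simple, this copy survives intersection with the image of the normal subgroup above, so the latter contains $-\mathrm{Id}$ acting on $V_f/p$, whence $-\mathrm{Id}=(-\mathrm{Id})\otimes\mathrm{Id}\in\Aut_K(V_\wp/p)$.

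\emph{Main obstacle.} Everything outside the previous paragraph is formal; the crux is the large-image assertion, i.e.\ ruling out the exceptional subgroups in Dickson's list for the representation on $V_f/p$, and this is exactly where the irreducibility and non-triviality of $V_\wp/p$ over $\Gal(\overline{L_1}/L_1)$ together with the coprimality of $p$ with the conductors must be fully used. A more robust variant, which avoids the search for a central scalar, is to appeal directly to the explicit vanishing of $H^1(\mathrm{SL}_2(k),W)$ for the relevant tensor-product modules $W$ and then compare $\Aut_K(V_\wp/p)$ with the subgroup generated by that copy of $\mathrm{SL}_2$ via a restriction--corestriction argument, the index being prime to $p$.
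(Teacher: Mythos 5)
Your approach shares the paper's two pillars---Dickson's classification and Sah's lemma---but differs in where Dickson is applied. The paper splits on whether $p$ divides $|\Aut_K(V_\wp/p)|$; when it does, it invokes Dickson directly on $\Aut_K(V_\wp/p)$ (a subgroup of $\GL_4$) to assert it contains $\mathrm{SL}_2(\mathbb{F}_q)$, takes the scalar $2I$, and applies Sah's lemma. You instead run Dickson on the honestly two-dimensional factor $V_f/p$ and then pass through the normal subgroup $\Gal(\overline{\mathbb{Q}}/K(V_\psi/p))$, using that $\mathrm{SL}_2$ has no proper normal subgroup of solvable index, to land $-\mathrm{Id}$ inside $\Aut_K(V_\wp/p)$. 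That refinement is well motivated: $V_\wp/p$ is four-dimensional, so the paper's one-line citation of Dickson on $\Aut_K(V_\wp/p)$ is, read literally, not well formed, and your detour through $V_f/p$ is what a careful write-up should do. Your inflation--restriction reduction between the two $H^1$'s and your unwinding of Sah's lemma as an explicit coboundary are both correct and match the paper's final step.

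There is, however, a gap in the step you yourself flag as the crux. You assert that simplicity and non-triviality of $V_\wp/p$ over $\Aut_{L_1}(V_\wp/p)$, together with $p\nmid ND\phi(N)N_AN_\psi$, exclude the dihedral and projectively exceptional cases in Dickson's list for the image on $V_f/p$; as stated these hypotheses do no such thing (a projectively icosahedral image on $V_f/p$, say, is compatible with all of them). What rescues the argument---and what the paper's proof does explicitly and you omit---is a preliminary split on $p\mid|\Aut_K(V_\wp/p)|$. If $p$ does not divide the order, $H^1(\Aut_K(V_\wp/p),V_\wp/p)=0$ for elementary reasons and no image computation is needed. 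If $p$ divides the order, then since the image on $V_\psi/p$ is abelian of order dividing $|(\mathcal{O}_{F,\wp}/p)^\times|^2$ and hence prime to $p$, $p$ must divide the order of the image on $V_f/p$; that image therefore contains a nontrivial unipotent, which (for $p>5$) already rules out the dihedral and exceptional subgroups since they have order prime to $p$, while irreducibility of $V_f/p$ over $\Gal(\overline{L_1}/L_1)$---which does follow from simplicity of $V_\wp/p$ there, because $V_\psi/p$ restricted to that group is a sum of characters---rules out Borels, and Dickson then delivers $\mathrm{SL}_2(\mathbb{F}_q)$. Inserting this case split makes your proposal complete; without it, the sentence claiming the hypotheses exclude the exceptional cases is unsupported and the argument does not close.
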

\begin{proof}
First note that if $p$ $\nmid |\Aut_{K}(V_{\wp}/p)|$, then $$H^1(\Aut_{K}(V_{\wp}/p), V_{\wp}/p)=0.$$
If $p $ divides $|\Aut_{K}(V_{\wp}/p)|$, then since $V_{\wp}/p$ is irreducible as an $\Aut_{K}(V_{\wp}/p)-$module, Dickson's lemma \cite[Theorem~6.21]{suzuki1982group} implies that $\Aut_{K}(V_{\wp}/p)$ contains $\mathrm{SL}_2(F_q)$ for some $q$. In particular, it contains $2I$ where $I$ is the identity map. 
Sah's lemma \cite[8.8.1]{lang1983fundamentals} states that if $G$ is a group, $M$ a $G$-representation, and $g$ an element of $\mathrm{Center}(G)$, then the map $x \longrightarrow (g-1) \ x$ is the zero map on $H^1(G,M)$.
Therefore, by Sah's lemma, the map $x \mapsto (2I- I)x=Ix$ is the zero map on $$H^1(\Aut_{K}(V_{\wp}/p), V_{\wp}/p)$$ implying that $H^1(\Aut_{K}(V_{\wp}/p), V_{\wp}/p)=0$.
As a consequence, $H^1(K(\mu_p)(V_{\wp}/p)/K, V_{\wp}/p)=0$ since $p$ does not divide $| \Gal(K(\mu_p)/K) | = p-1.$
\end{proof}

We denote the Galois group $\Gal(L/K)$ by $G$.
The restriction map $$r: H^1(K,V_{\wp}/p) \longrightarrow H^1(L,V_{\wp}/p)^G=\Hom_G(\Gal(\overline{\mathbb{Q}}/L ), V_{\wp}/p)$$ has kernel
$$\Ker(r)= H^1(L/K, V_{\wp}/p)=0$$ by Lemma \ref{dickson}.
Consider the evaluation pairing 
\begin{align} \label{global pairing 2}
[ \ , \ ] \ : \ r(S^{ }) \times \Gal(\overline{\mathbb{Q}}/L) \longrightarrow V_{\wp}/p.
\end{align}
We denote by $\Gal_S(\overline{\mathbb{Q}}/L)$ the annihilator of $r(S^{ })$. Let $L^S$ be the extension of $L$ fixed by $\Gal_S(\overline{\mathbb{Q}}/L)$ and $G_S$ the Galois group $\Gal(L^S/L)$.

\begin{remark}
The element $P(\varphi_1) $ belongs to $S$ by Proposition \ref{Ramification2}. Also, $L(P(\varphi_1))$ is a subfield of $L^S$. Indeed, assume $\rho \in \Gal_S(\overline{\mathbb{Q}}/L)$, then $[s,\rho]=0$ for all $s \in S^{ } $. Hence, $P(\varphi_1)$ defines a cocycle of $S^{ }$ by $$\rho \longrightarrow \rho( P(\varphi_1)) - P(\varphi_1)=0.$$ This implies that $\rho$ fixes 
$L(P(\varphi_1))$, a subfield of $L^S$.
\end{remark}

\begin{proposition}\label{Choice2}
There is a Kolyvagin prime $q$ such that $$res_{\beta} P(\varphi_1)^{\psi} \neq 0,$$ where $\beta$ is a prime dividing $q$ in $K$.
\end{proposition}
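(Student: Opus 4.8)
The plan is to run the standard \v Cebotarev/Kolyvagin dictionary: translate the non-vanishing of a localization of $P(\varphi_1)^{\psi}$ into the existence of a suitable Frobenius element. Write $c:=P(\varphi_1)^{\psi}$, which is nonzero in $H^1(K,V_{\wp}/p)$ by hypothesis, and restrict it to $L=K(\mu_p)(V_{\wp}/p)$. By Lemma \ref{dickson} the restriction map $r$ is injective, so $d:=r(c)$ is a nonzero $G$-equivariant homomorphism $\Gal(\overline{\mathbb{Q}}/L)\to V_{\wp}/p$ (genuinely a homomorphism since $\Gal(\overline{\mathbb{Q}}/L)$ acts trivially on $V_{\wp}/p$). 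Let $M=L(P(\varphi_1)^{\psi})$ be the finite extension of $L$ cut out by $d$, so that $d$ induces an injection $\bar d\colon\Gal(M/L)\hookrightarrow V_{\wp}/p$ with nonzero image $N$, a $G=\Gal(L/K)$-submodule of $V_{\wp}/p$. Recall (see the characteristic polynomial computation preceding \eqref{Condition20}) that complex conjugation $\tau$ acts on $V_{\wp}/p$ with characteristic polynomial $(x^2-1)^2$, and that $p$ is odd because $\phi(N)$ is even; hence the $(-1)$-eigenspace $(V_{\wp}/p)^{\tau=-1}$ is a proper $\mathcal{O}_{F,\wp}/p$-submodule, of rank $2$.

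Next, fix a finite Galois extension $\widetilde M/\mathbb{Q}$ containing $M$ and take $\sigma\in\Gal(\widetilde M/\mathbb{Q})$ with $\sigma|_L=\tau|_L$. By \eqref{Condition20}, any prime $q$ unramified in $\widetilde M$ with $\Frob_q(\widetilde M/\mathbb{Q})=[\sigma]$ is a Kolyvagin prime, and since $\tau$ is nontrivial on $K$ it is inert in $K$; let $\beta$ be the unique prime of $K$ above it. As $\Frob_q|_L=\tau|_L$ has order $2$, the prime $\beta$ splits completely in $L/K$, so $\Frob_\beta$ acts trivially on $V_{\wp}/p$, the completions $L_{\mathfrak q}$ and $K_\beta$ agree, and $H^1(K^{ur}_\beta/K_\beta,V_{\wp}/p)\cong V_{\wp}/p$ via evaluation at Frobenius. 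By Proposition \ref{Ramification2}(2), $\res_\beta c$ lies in this (injectively embedded) subgroup, and tracing through the dictionary it is identified with $\bar d(\sigma^2)$: the Frobenius of a prime of $M$ above $\beta$, viewed in $\Gal(M/L)$, has residue degree $2$ over $q$ and therefore equals $\sigma^2\in\Gal(M/L)$. Thus it suffices to produce such a $\sigma$ with $\bar d(\sigma^2)\neq 0$, equivalently $\sigma^2\neq 1$ (as $\bar d$ is injective); \v Cebotarev then produces the prime $q$, and $\beta=q$.

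To produce $\sigma$, fix one lift $\sigma_0$ of $\tau|_L$, so $\sigma_0^2\in\Gal(M/L)$, and for $h\in\Gal(M/L)$ one has $(\sigma_0 h)^2=\sigma_0^2\,(\sigma_0^{-1}h\sigma_0)h$. Since $\bar d$ is additive on the abelian group $\Gal(M/L)$, and (granting the complex-conjugation compatibility discussed below) conjugation by $\sigma_0$ on $\Gal(M/L)$ transports via $\bar d$ to the action of $\tau$ on $N$, this gives $\bar d((\sigma_0 h)^2)=\bar d(\sigma_0^2)+(\tau+1)\bar d(h)$. If $\bar d(\sigma_0^2)\neq 0$ take $h=1$; otherwise it suffices to find $h$ with $(\tau+1)\bar d(h)\neq 0$, i.e.\ to show $N\not\subseteq(V_{\wp}/p)^{\tau=-1}$. (Conversely, if $N\subseteq(V_{\wp}/p)^{\tau=-1}$ then $\tau$ acts as $-1$ on $N$; as $\sigma_0^2$ is centralised by $\sigma_0$ we would get $\bar d(\sigma_0^2)=-\bar d(\sigma_0^2)$, hence $\bar d(\sigma_0^2)=0$, and no lift would work — so $N\not\subseteq(V_{\wp}/p)^{\tau=-1}$ is exactly the point.) I would deduce this non-containment from the hypotheses that $V_{\wp}/p$ is a simple $\Aut_{L_1}(V_{\wp}/p)$-module and that $\Gal(\overline{L_1}/L_1)$ acts nontrivially: a nonzero $G$-stable submodule $N$ trapped inside the proper subspace $(V_{\wp}/p)^{\tau=-1}$ would, after examining the decomposition of $V_{\wp}/p=V_f\otimes_{}V_\psi$ and the $\psi/\overline{\psi}$-eigenspace structure over $\Gal(\overline{\mathbb{Q}}/K)$, generate a proper nonzero $\Gal(\overline{\mathbb{Q}}/L_1)$-submodule, contradicting simplicity.

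The main obstacle is the input used twice above: the behaviour of $P(\varphi_1)^{\psi}$ under complex conjugation. Because $c$ is a class over $K$ and not over $\mathbb{Q}$, and $\tau$ interchanges the $\psi$- and $\overline{\psi}$-eigenspaces, one must first establish that $\tau\Delta_{\varphi_1}$ is again a generalized Heegner cycle and that the conjugate class $\tau_*c$ differs from $c$ in a controlled way. This is what legitimises (i) that $M=L(P(\varphi_1)^{\psi})$ (or its compositum with the $\overline{\psi}$-analogue) is stable under $\tau$, so that \v Cebotarev can be applied over $\mathbb{Q}$, and (ii) that conjugation by $\sigma_0$ on $\Gal(M/L)$ corresponds to the $\tau$-action on $N$ used in the coset computation. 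Granting this compatibility and the non-containment $N\not\subseteq(V_{\wp}/p)^{\tau=-1}$, the remaining steps — the coset identity and the \v Cebotarev application — are routine.
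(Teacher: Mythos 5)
Your argument follows essentially the same \v{C}ebotarev/Kolyvagin route as the paper, but is considerably more explicit about what the dictionary actually requires: you correctly identify that one must (a) produce a lift $\sigma$ of $\tau|_L$ with $\bar{d}(\sigma^2)\neq 0$, (b) reduce this to the non-containment $N\not\subseteq (V_{\wp}/p)^{\tau=-1}$, and (c) understand how complex conjugation acts on the class $P(\varphi_1)^{\psi}$. The paper compresses all of this into the bare assertion that \v{C}ebotarev supplies a Kolyvagin prime $q$ with $\Frob_q(L^S/\mathbb{Q})=\tau h$ and $h^{1+\tau}\notin\Gal(L^S/L(P(\varphi_1)^{\psi}))$, without verifying that such an $h$ exists; your unwinding shows exactly why that existence is not automatic. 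Your use of $L(P(\varphi_1)^{\psi})$ alone rather than $L^S$ is also a mild simplification that changes nothing essential.

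That said, as written your proof has two genuine gaps which you yourself flag. First, the non-containment $N\not\subseteq(V_{\wp}/p)^{\tau=-1}$ is asserted to follow from the simplicity hypothesis, but no argument is given; note that $N$ is a priori only a $\Gal(L/K)$-submodule, not a $\Gal(\overline{L_1}/L_1)$-submodule, so the hypothesis that $V_{\wp}/p$ is simple over $\Aut_{L_1}(V_{\wp}/p)$ does not immediately forbid $N$ from lying inside the rank-$2$ subspace $(V_{\wp}/p)^{\tau=-1}$. Second, and more seriously, the compatibility you "grant" — that conjugation by $\sigma_0$ on $\Gal(M/L)$ transports via $\bar{d}$ to the linear $\tau$-action on $N$ — is precisely what fails in the naive form: as the paper's Lemma \ref{nonvanishing} shows, $\tau$ carries $P(\varphi_1)^{\psi}$ to a nonzero multiple of $\sigma P(\varphi_1)^{\overline{\psi}}$, so $M=L(P(\varphi_1)^{\psi})$ is not $\tau$-stable, and conjugation by $\sigma_0$ does not even induce an automorphism of $\Gal(M/L)$. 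The honest fix is to replace $M$ by the $\tau$-stable compositum $H_1^{\psi}H_1^{\overline{\psi}}$ (or by $L^S$ as the paper does), track the $\tau$-action through Lemma \ref{nonvanishing}, and then prove the non-containment using the resulting description of the image. As it stands, the coset identity $\bar{d}((\sigma_0 h)^2)=\bar{d}(\sigma_0^2)+(1+\tau)\bar{d}(h)$ is not literally available, so the reduction in (a)--(b) needs to be re-derived in the corrected setting before the \v{C}ebotarev step can be invoked.
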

\begin{proof}
By Ceboratev's density theorem, there is a Kolyvagin prime $q$ such that $$\Frob_q(L^S/\mathbb{Q})= \tau h, \ \ h^{1+\tau} \notin \Gal(L^S/L(P(\varphi_1)^{\psi}))$$ for some $h$ in $\Gal(L^S/L)$. 
The restriction of $\tau h$ to $L$ is indeed $\tau$. Let $\beta$ be a prime of $L$ above $q$.
Since $\Frob_\beta(L(P(\varphi_1)^{\psi})/L)=(\tau h)^2$ does not fix $P(\varphi_1)^{\psi}$, $\beta$ is not in the kernel of the Artin map. Hence $\beta$ does not split completely in $L(P(\varphi_1)^{\psi}).$ Therefore, $res_{\beta} P(\varphi_1)^{\psi} \neq 0$.
\end{proof}
Consider the following extensions
\begin{displaymath}
    \xymatrix@R=3pc @C=0.5pc{  & H_0=H_1H_2 &  \\
	        H_1=L(P(\varphi_1)) \ar[ur] & & H_2=L(P(\varphi_{q})) \ar[ul] \\
                & L=K(\mu_p)(V_{\wp}/p) \ar[ur]_{V_2}  \ar[ul]^{V_1} \ar@{-->}[uu]^{V_0}  & }
\end{displaymath}
where $q$ is a Kolyvagin prime satisfying
$$res_{\beta} P(\varphi_1)^{\psi} \neq 0$$ for some $\beta$ in $L$ above $q$ (see Proposition \eqref{Choice2}).
We define $$V_i=\Gal(H_i/F) \ \ \hbox{ for } \ i=0,1,2.$$ 
An automorphism of $L(P(\varphi_1))/L $ corresponds to the multiplication of the generators of $L(P(\varphi_1))$ over $L$ by an element of $(V_{\wp})_p \simeq V_{\wp}/p$. Hence $$V_1=\Gal(L(P(\varphi_1))/L) \simeq V_{\wp}/p.$$
Similarly, we have $V_2 \simeq V_{\wp}/p$.
We recall that $h$ is the class number of $K$, and $\mathfrak{a}_1, \cdots, \mathfrak{a}_h$ are a set of representatives of the ideal classes of $\mathcal{O}_K$. 
For a cohomology class $c$ in $H^1(K, V_{\wp}/p)$, we defined $$c^{\psi}= \dfrac{1}{h} \sum_{i =1}^h \ \psi^{-1}(\mathfrak{a}_i) \  \mathfrak{a}_i \cdot c,$$ where $ \mathfrak{a}_i \cdot c$ is the image of $c$ by the map $H^1(K, V_{\wp}/p) \longrightarrow H^1(K, V_{\wp}/p \big/ V_{\wp}/p \ [\mathfrak{a}_i])$ induced by the projection map 
$$V_{\wp}/p \longrightarrow V_{\wp}/p \big/ V_{\wp}/p \ [\mathfrak{a}_i].$$ Note that $c^{\psi}$ is independent of the choice of representatives $\mathfrak{a}_1, \cdots, \mathfrak{a}_h$.
Furthermore, $(c^{\psi})^{\psi}=c^{\psi}$ lies in $H^1(K, V_{\wp}/p)^{\psi}$. 
We denote by $$H_1^{\psi}= L(P(\varphi_1)^{\psi}), \ \ H_1^{\overline{\psi}}= L(P(\varphi_1)^{\overline{\psi}}), \ \ H_2^{\psi}= L(P(\varphi_q)^{\psi}), \ \  H_2^{\overline{\psi}}= L(P(\varphi_q)^{\overline{\psi}}) ,$$
and we let $$V_i^{\psi}= \Gal(H_i^{\psi}/H_i), \ \ V_i^{\overline{\psi}}= \Gal(H_i^{\overline{\psi}}/H_i), \ \ \hbox{ for } \ i=1,2.$$
We will show that $$H_1^{\psi} \cap H_2^{\psi} = L.$$

\begin{lemma} \label{Corestriction2}
There is an isomorphism of $\mathcal{O}_{F,\wp}/p $-modules $$H^1(K^{ur}_{\lambda}/ K_{\lambda} ,V_{\wp}/p) \longrightarrow H^1(K^{ur}_{\lambda} ,V_{\wp}/p) $$ mapping $\res_{\lambda}P(\varphi_{m \ell})$
to $\res_{\lambda} P(\varphi_m)$.
Also, $\res_{\lambda} P(\varphi_\ell)$ is ramified.
\end{lemma}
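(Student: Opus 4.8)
The plan is to run Kolyvagin's finite--singular comparison at $\lambda$ in our four-dimensional setting $M:=V_{\wp}/p$, feeding in the local relation of Proposition \ref{LocalRelation2}, the Hecke relation of Proposition \ref{Hecke2}, and the fixing/lifting of the derived classes (Proposition \ref{Fixed2}) to match up the localizations of $P(\varphi_{m\ell})$ and $P(\varphi_m)$.

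\emph{Local picture at $\lambda$.} First I would unpack the Kolyvagin condition: by \eqref{Condition20} the characteristic polynomial of $\Frob_\lambda$ on $M$ is $(x^2-1)^2$, one has $\ell\equiv-1\bmod p$, and --- since $\ell\nmid NN_AN_\psi p$ --- $M$ is unramified at $\lambda$, i.e. $M^{I_\lambda}=M$; moreover $M$ is self-dual. Writing $M=M^+\oplus M^-$ for the $\Frob_\lambda$-eigenspace decomposition (each free of rank $2$ over $\mathcal{O}_{F,\wp}/p$, as $\pm1$ are distinct mod the odd prime $p$), the inflation--restriction sequence for $I_\lambda\trianglelefteq\Gal(\overline{K_\lambda}/K_\lambda)$ reads
$$0\longrightarrow H^1(K^{ur}_\lambda/K_\lambda,M)\longrightarrow H^1(K_\lambda,M)\overset{\partial}{\longrightarrow} H^1(K^{ur}_\lambda,M)^{\Frob_\lambda}\longrightarrow0 ,$$
the last map being surjective because $\mathrm{cd}(\hat{\mathbb{Z}})=1$. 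The finite part $H^1(K^{ur}_\lambda/K_\lambda,M)=M/(\Frob_\lambda-1)M\cong M^+$ is free of rank $2$ ($\Frob_\lambda-1$ kills $M^+$, is invertible on $M^-$); the local Euler characteristic formula, local Tate duality and self-duality of $M$ then give $|H^1(K_\lambda,M)|=|H^0(K_\lambda,M)|^2=|M^+|^2$, so the singular quotient is also free of rank $2$ (concretely $\cong M^-$, using $\ell\equiv-1$). Hence finite and singular parts are isomorphic $\mathcal{O}_{F,\wp}/p$-modules; I would fix such an isomorphism $\mathrm{comp}_\lambda$.

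\emph{Matching the Kolyvagin classes.} The class $\res_{\lambda_m}y_{\varphi_m}$ is unramified (as $\ell\nmid m$ and $W$ has good reduction at $\ell$), while Proposition \ref{LocalRelation2} identifies the ramified part of $\res_{\lambda_n}y_{\varphi_n}$ ($n=m\ell$) with the unit $k=\ell\,\deg(\varphi_i)/\deg(\varphi_j)$ times the $\Frob_\ell(L_n/L_m)$-translate of $\res_{\lambda_m}y_{\varphi_m}$. I would then push this down to $K$. Applying $D_n=D_\ell D_m$: along the totally (tamely) ramified extension $L_n/L_m$ the generator $\sigma_\ell$ acts trivially on unramified classes, so $D_\ell$ acts there by $\sum_{i=1}^{\ell}i=\tfrac{\ell(\ell+1)}{2}\equiv0\bmod p$ (since $p\mid\ell+1$, $p$ odd); this standard derivative-class mechanism --- together with $D_n y_{\varphi_n}$ being $G_n$-fixed (Proposition \ref{Fixed2}) and Relation \eqref{Relation2} --- forces $\res_\lambda(D_ny_{\varphi_n})$ to be purely ramified and equal, via $\mathrm{comp}_\lambda$, to a unit multiple of $\res_\lambda(D_my_{\varphi_m})$. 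Lifting to $c(\varphi_n),c(\varphi_m)\in H^1(L_1,M)$ with $\res_{L_1,L_\bullet}c(\varphi_\bullet)=D_\bullet y_{\varphi_\bullet}$ and corestricting to $K$ (using that $\ell$ splits completely in $L_1$, so all local terms live over $K_\lambda$, and $p\nmid[L_1:K]$ by $(p,\phi(N))=(p,D)=1$), I would obtain that the singular part of $\res_\lambda P(\varphi_{m\ell})$ is a unit times $\res_\lambda P(\varphi_m)$; rescaling $\mathrm{comp}_\lambda$ by that unit gives the asserted isomorphism $\res_\lambda P(\varphi_{m\ell})\mapsto\res_\lambda P(\varphi_m)$.

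\emph{Last assertion and main obstacle.} Taking $m=1$, $\res_\lambda P(\varphi_\ell)$ is ramified because its singular part corresponds under $\mathrm{comp}_\lambda$ to $\res_\lambda P(\varphi_1)$, which is nonzero in the finite part for the Kolyvagin prime $\ell=q$ furnished by Proposition \ref{Choice2} (there $\res_\beta P(\varphi_1)^\psi\neq0$, hence $\res_\beta P(\varphi_1)\neq0$). The hard part is the middle step: propagating the unit $k$ and the factor $\Frob_\ell(L_n/L_m)$ through the derivative operator $D_n$ and the corestriction $\cor_{L_1,K}$ --- in particular checking that $D_\ell$ annihilates the unramified part mod $p$ while inducing a bijection from the unramified classes over $L_m$ onto the ramified classes over $L_n$, and that $\mathrm{comp}_\lambda$ can be chosen compatibly with these Euler-system maps. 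The rest is formal cohomology of an unramified $p$-torsion module in residue characteristic $\ell\neq p$.
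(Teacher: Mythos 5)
Your proof follows the standard Kolyvagin finite--singular comparison at the prime $\lambda$, which is indeed what the paper defers to: its own proof is a one-line citation to Section~5 of \cite{Elias2015Kolyvagin} together with Propositions \ref{Hecke2} and \ref{LocalRelation2}. The architecture you describe --- set up the local finite/singular decomposition, feed in the congruence of Proposition \ref{LocalRelation2}, and propagate it through the derivative operator $D_n$ and $\cor_{L_1,K}$ --- is the right one, and you correctly flag the derivative-class bookkeeping as the technical core that still needs to be carried out.

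However, your local picture at $\lambda$ contains a concrete error. You write $M=M^+\oplus M^-$ for the $\Frob_\lambda$-eigenspace decomposition with each summand free of rank $2$, and deduce that the finite and singular parts are $M^+$ and $M^-$. But $\lambda$ is the prime of $K$ above $\ell$, and since $\ell$ is inert, $K_\lambda$ has residue field $\mathbb{F}_{\ell^2}$, so $\Frob_\lambda=\Frob_\ell^2$. The Kolyvagin condition \eqref{Condition20} gives $\Frob_\ell(L/\mathbb{Q})=\tau$, hence $\Frob_\lambda(L/K)=\tau^2=1$; that is, $\lambda$ splits completely in $L=K(\mu_p)(V_\wp/p)$, so $\Frob_\lambda$ acts \emph{trivially} on $M$ and also on $\mu_p$ (since $\ell^2\equiv 1\bmod p$). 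Consequently $M^+=M$, $M^-=0$, and both the finite part $M/(\Frob_\lambda-1)M=M$ and the singular part $\Hom(\mu_p,M)^{\Frob_\lambda}\cong M$ are free of rank $4$ over $\mathcal{O}_{F,\wp}/p$, not rank $2$. The characteristic polynomial $(x^2-1)^2$ and the rank-$2$ eigenspaces you invoke belong to $\tau$, i.e.\ to $\Frob_\ell$ over $\mathbb{Q}$, not to $\Frob_\lambda$ over $K$; the rank-$2$ modules actually used in the paper (e.g.\ in Proposition \ref{Generators2}) are $\psi$-eigenspaces, not $\Frob_\lambda$-eigenspaces. This slip does not destroy the bare existence of an isomorphism between finite and singular parts --- both are now simply $\cong M$ --- so the conclusion of the lemma survives, but the $\Frob_\lambda$ computation needs correcting, since it underlies the local Tate pairing and the $\psi$-eigenspace counting you will need downstream.
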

\begin{proof}
This is an adaptation of \cite[Section~5]{Elias2015Kolyvagin} that uses the properties of the Euler system considered in Proposition \ref{Hecke2} and Proposition \ref{LocalRelation2}.
\end{proof}

Recall that $\tau$ denotes complex conjugation.
\begin{proposition}\label{Disjoint2}
The extensions $H_1^{\psi}$ and $H_2^{\psi}$ are linearly disjoint over $L$.
\end{proposition}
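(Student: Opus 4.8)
The plan is to show $H_1^{\psi} \cap H_2^{\psi} = L$, which forces linear disjointness since both $V_1^{\psi}$ and $V_2^{\psi}$ are one-dimensional over $\mathcal{O}_{F,\wp}/p$ (so the intersection, being a sub-extension of each, is either $L$ or all of $H_i^{\psi}$). Set $M = H_1^{\psi} \cap H_2^{\psi}$ and suppose $M \neq L$; then $M = H_1^{\psi} = H_2^{\psi}$ as extensions of $L$, and in particular the Kolyvagin class $P(\varphi_q)^{\psi}$ is, after restriction to $L$, a $\Gal(\overline{\mathbb{Q}}/L)$-homomorphism with the same kernel as $P(\varphi_1)^{\psi}$, hence a scalar multiple: $r(P(\varphi_q)^{\psi}) = u \cdot r(P(\varphi_1)^{\psi})$ for some $u \in (\mathcal{O}_{F,\wp}/p)^{\times}$ (using that $r$ is injective by Lemma \ref{dickson}). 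The strategy is to derive a contradiction by localizing at the prime $\beta \mid q$ supplied by Proposition \ref{Choice2}.

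First I would examine $\res_{\beta}$ of both sides. On the one hand, by the very choice of $q$ in Proposition \ref{Choice2} we have $\res_{\beta} P(\varphi_1)^{\psi} \neq 0$, and since $q$ is a Kolyvagin prime, $\beta$ is unramified for $P(\varphi_1)^{\psi}$ by Proposition \ref{Ramification2}(2), so $\res_{\beta} P(\varphi_1)^{\psi}$ lies in $H^1(K_\beta^{ur}/K_\beta, V_{\wp}/p)$ and is nonzero there. On the other hand, $P(\varphi_q)^{\psi}$ is built from $D_q y_{\varphi_q}$, and by Lemma \ref{Corestriction2} the local class $\res_{\lambda} P(\varphi_q)$ (with $\lambda = \beta$ the prime of $K$ below) is \emph{ramified} — it has nontrivial image in $H^1(\overline{K_\beta}/K_\beta^{ur}, V_{\wp}/p)$. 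Taking the $\psi$-component commutes with restriction and with the finite/singular decomposition (the operators $\mathfrak{a}_i\cdot$ and $\psi^{-1}(\mathfrak{a}_i)$ act $G_K$-equivariantly), so $\res_\beta P(\varphi_q)^{\psi}$ has nontrivial singular part while $\res_\beta P(\varphi_1)^{\psi}$ has zero singular part. This contradicts $r(P(\varphi_q)^{\psi}) = u \cdot r(P(\varphi_1)^{\psi})$, since applying $\res_\beta$ to a scalar multiple of an everywhere-unramified-at-$\beta$ class cannot produce a ramified class. Hence $M = L$.

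The translation from $H_1^{\psi} = H_2^{\psi}$ to the relation $r(P(\varphi_q)^{\psi}) = u \cdot r(P(\varphi_1)^{\psi})$ needs a little care: $L(P(\varphi_i)^{\psi})$ is the fixed field of the kernel of $r(P(\varphi_i)^{\psi}) \colon \Gal(\overline{\mathbb{Q}}/L) \to V_{\wp}/p$, and equality of these fields means the two homomorphisms have the same kernel, hence induce the same (up to automorphism of the target $V_{\wp}/p$) injection of $\Gal(H_i^{\psi}/L)$ into $V_{\wp}/p$; since $V_1^{\psi} \cong V_{\wp}/p$ is a simple $\Aut_{L_1}(V_{\wp}/p)$-module of rank one over $\mathcal{O}_{F,\wp}/p$, the image subgroups coincide and the two classes differ by an element of $\Aut(V_{\wp}/p)$ commuting with the $\psi$-action, which on the $\psi$-isotypic line is multiplication by a unit $u$. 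I expect the main obstacle to be this bookkeeping — making precise that "same extension" upgrades to "scalar multiple" in the presence of the four-dimensional coefficient ring $\mathcal{O}_{F,\wp}/p$ and the $\psi$-projector — together with verifying that the singular/unramified decomposition at $\beta$ is genuinely respected by the $(\cdot)^{\psi}$ operation; the core input, the ramified-versus-unramified dichotomy, is already packaged in Lemma \ref{Corestriction2} and Proposition \ref{Ramification2}.
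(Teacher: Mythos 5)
Your core argument is the right one and is in fact exactly the paper's: the contradiction comes from the ramification dichotomy at the prime $\beta$ above $q$ — Proposition \ref{Ramification2}(2) forces $\res_\beta P(\varphi_1)^{\psi}$ into the unramified part, Lemma \ref{Corestriction2} forces $\res_\beta P(\varphi_q)^{\psi}$ to be ramified, and Proposition \ref{Choice2} (together with the isomorphism of Lemma \ref{Corestriction2}) makes both non-zero, so no nontrivial $\mathcal{O}_{F,\wp}/p$-linear relation between them can hold.

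However, the reduction you use to reach a scalar relation has a real gap. You assert that $V_1^{\psi}$ and $V_2^{\psi}$ are one-dimensional over $\mathcal{O}_{F,\wp}/p$, hence that the intersection $M$ is either $L$ or all of $H_i^{\psi}$. But $V_{\wp}/p$ is free of rank $4$, and its $\psi$-isotypic piece has rank $2$ (this is consistent with the rank-$2$ count in Proposition \ref{Generators2}); the Galois group $\Gal(H_i^{\psi}/L)$ is the image of the restricted cocycle in the $\psi$-part, so it can be rank $2$, admit proper intermediate subfields, and your dichotomy $M\in\{L,H_i^{\psi}\}$ fails. The further step "same kernel implies scalar multiple" also leans on the image being a one-dimensional $\mathcal{O}_{F,\wp}/p$-line, which is the same unsupported claim. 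The paper avoids this intermediate-field case analysis by a different reduction: via Lemma \ref{dickson} the restriction $r$ is injective, so it identifies $P(\varphi_1)^{\psi}$, $P(\varphi_q)^{\psi}$ with elements of $\Hom_{\Gal(L/K)}\bigl(\Gal(\overline{\mathbb{Q}}/L), V_{\wp}/p\bigr)$, and it passes directly from "not linearly disjoint" to a linear dependence $u_1 c_1 + u_2 c_2 = 0$ with $u_1,u_2\in(\mathcal{O}_{F,\wp}/p)^{\times}$, which is then killed by the ramification argument. If you replace your "one-dimensional, hence all-or-nothing" reduction with that linear-dependence formulation (and justify carefully why non-disjointness of the fixed fields forces such a relation among the equivariant homomorphisms, which is where the simplicity of $V_{\wp}/p$ as an $\Aut_{L_1}(V_{\wp}/p)$-module enters), your proof lines up with the paper's.
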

\begin{proof}
Linearly independent cocycles $c_1$ and $c_2$ of $H^1(K,V_{\wp}/p) $ over $\mathcal{O}_{F,\wp}/p$ can be viewed by Lemma \eqref{dickson} as linearly independent homomorphisms $h_1$ and $h_2$ in $\Hom_{\Gal(L/K)} (L,V_{\wp}/p)$ over $\mathcal{O}_{F,\wp}/p$.
Linearly independent homomorphisms $h_1$ and $h_2$ induce linearly disjoint extensions $\overline{L}^{\Ker(h_1)}$ and $\overline{L}^{\Ker(h_2)}$ of $L$.
Hence, if $H_1$ and $H_2$ were not linearly disjoint over $L$, we would have that 
$$u_1c_1+ u_2 c_2=0, \hbox{ for some $u_1, u_2$ in $(\mathcal{O}_{F,\wp}/p)^{*}$},$$
where $$c_1 = P(\varphi_1)^{\psi} \ \hbox{ and } \ c_2= P(\varphi_q)^{\psi}.$$
Lemma \eqref{Corestriction2} implies that $\res_{\beta} P(\varphi_{q})^{\psi}$ is ramified for $\beta \in K$ dividing $q$ and provides an isomorphism sending $\res_{\beta}P(\varphi_{q})^{\psi}$ to $\res_{\beta} P(\varphi_1)^{\psi}$. The latter implies by Proposition \eqref{Choice2} that $\res_{\beta} P(\varphi_{q})^{\psi}$ is non-zero.
This is inconsistent with the equality $u_1c_1+ u_2 c_2=0$ since the nontrivial cocycles $c_1,  c_2$ have different ramification properties.
\end{proof}

\section{Complex conjugation}
We study the action of complex conjugation on the image by the $p$-adic Abel-Jacobi map of generalized Heegner cycles. 
Recall that $D_{A_j}$ is the element $$(graph(\varphi_j)- 0 \times A -\deg(\varphi_j)( A_j\times 0)) \hbox{ in } NS(A_j \times A).$$ In this section, we write $D_{A_j, \varphi_j}$ instead of $D_{A_j}$ alone in order to keep track of the underlying map $\varphi_j$.

\begin{lemma}\label{ComplexConjugation2}
There is an element $\sigma$ in $\Gal(K_{j}/K)$ such that $$\tau  \Phi(\Delta_{\varphi_{j}})_{\wp}= (-1)^{\frac{r+2}{2}} \epsilon_L \ N^{r/2} \deg^{-r}( \sigma) \sigma \Phi(\Delta_{\varphi_{j}})_{\wp}, $$ where $ \epsilon_L$ is the sign of the functional equation of $L(f,s)$.
\end{lemma}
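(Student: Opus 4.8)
The plan is to compute the action of complex conjugation $\tau$ on the generalized Heegner cycle $\Delta_{\varphi_j} = e_r \Upsilon_{\varphi_j}$ directly on the level of cycle classes, and then transport this through the $p$-adic Abel–Jacobi map. First I would fix a model: since $A$ has CM by $\mathcal{O}_K$ and is defined over $L_1$, complex conjugation $\tau$ sends $A$ to a curve $A^\tau$ isogenous to $A$, and sends the pair $(\varphi_j, A_j)$ to another pair $(\varphi_j^\tau, A_j^\tau)$ in $Isog^{\mathcal N}(A^\tau)$. Because $A$ is a $\mathbb{Q}$-curve, $A^\tau$ is $K_1$-isogenous to $A$, and using the known action of $\tau$ on the CM elliptic curve (the isogeny $A^\tau \to A$ has degree tied to the norm form, and the identification $\End(A)\otimes\mathbb{Q}\simeq K$ is conjugated by $\tau$) one identifies $(\varphi_j^\tau, A_j^\tau)$ with $(\sigma\cdot\varphi_j, \sigma\cdot A_j)$ for a suitable $\sigma \in \Gal(K_j/K)$ up to an isogeny whose degree accounts for the $N^{r/2}$ and $\deg^{-r}(\sigma)$ factors. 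Here the $r$-th power reflects that $\Upsilon_{\varphi_j} = graph(\varphi_j)^r$ sits inside the $r$-fold product, so each factor contributes one copy of the normalizing constant; this is exactly where the $\deg^{-r}(\sigma)$ and $N^{r/2}$ arise, via Lemma \ref{Divisors2} applied $r$ times to $D_{A_j,\varphi_j} = D_{A_j^\tau, \varphi_j^\tau}$ and the relation $\Delta_{\varphi_j} = D_{A_j}^r$.

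Next I would pin down the scalar $(-1)^{\frac{r+2}{2}}\epsilon_L$. This is the contribution of $\tau$ acting on the Kuga–Sato factor $W_r$ and on the modular-form part $V_f$. The cycle $\Upsilon_{\varphi_j}$ lives in $W_r \times A^r$; complex conjugation acts on the $W_r$-component through its action on $X_1(N)$ and on $Sym^r(R^1\pi_*\mathbb{Z}_p)$, and the induced sign on the $f$-isotypic piece $e_f H^1(X_1(N),\mathcal{F})(\tfrac r2+1)$ is governed by the Atkin–Lehner/Fricke involution $W_N$, which acts on the newform $f$ by $\pm 1$, the sign being $\epsilon_L$ (the sign of the functional equation of $L(f,s)$), after accounting for the Tate twist by $\tfrac{r+2}{2}$, which produces the extra $(-1)^{\frac{r+2}{2}}$. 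I would make this precise by comparing with the weight-$2$ case treated in \cite{bertolini1990descent} and the higher-weight case in \cite{nekovar1992kolyvagin} (see the analysis of $\tau$ on Heegner cycles there), since the $A^r$-factor is new but the $W_r$-factor and its sign are identical. The compatibility of $\Phi_{f,\psi,L_j}$ with the Galois action (established in Section 3, where $\Phi$ is $\mathbb{T}[\Gal(L_n/\mathbb{Q})]$-equivariant) then lets me push the cycle-level identity to the cohomology class $\Phi(\Delta_{\varphi_j})_\wp$.

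The main obstacle I expect is the careful bookkeeping of the isogeny $A^\tau \to A$ and its interaction with the $\mathcal{N}$-level structure and with the choice of representative in the ideal class group: one must verify that the resulting element really lies in $\Gal(K_j/K)$ (not some larger field), that the degree factors combine to exactly $N^{r/2}\deg^{-r}(\sigma)$ rather than some conjugate expression, and that the $\mathbb{Q}$-curve hypothesis is used correctly to descend the isogeny. A secondary subtlety is that $\Delta_{\varphi_j} = D_{A_j}^r$ holds only after applying the projector $e_r$, so I must check that $\tau$ commutes with $e_r$ (it does, since $e_r = e_W e_A$ is defined over $\mathbb{Q}$ as a correspondence, by the discussion in Section 2) before reading off the scalar. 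Once these points are settled, the identity follows by combining Lemma \ref{Divisors2} (applied $r$ times), the $\mathbb{Q}$-curve structure, the Atkin–Lehner sign computation, and the Galois-equivariance of the Abel–Jacobi map.
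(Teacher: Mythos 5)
Your overall strategy matches the paper's: compute the action of $\tau$ at the cycle level, relate $\tau A_j$ to a Galois conjugate via the Atkin--Lehner involution, extract scalar factors from Lemma~\ref{Divisors2} and from the Atkin--Lehner eigenvalue on the $f$-isotypic piece, and push through the $\mathbb{T}[\Gal]$-equivariant Abel--Jacobi map. The paper makes the middle step precise by quoting Gross's identity $\tau A_j = W_N(\sigma A_j)$ with $\sigma \in \Gal(K_j/K)$; your ``one identifies $(\varphi_j^\tau,A_j^\tau)$ with $(\sigma\cdot\varphi_j,\sigma\cdot A_j)$ up to an isogeny'' is exactly this, just not named.

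One piece of bookkeeping is off, though. You attribute the $N^{r/2}$ factor to Lemma~\ref{Divisors2} applied to the degree-$N$ Fricke isogeny, but Lemma~\ref{Divisors2} applied to an isogeny of degree $N$ on $D_{A_j}^r$ produces $N^{r}$, not $N^{r/2}$. In the paper's computation the final $N^{r/2}$ appears as the ratio of two evaluations of $(W\times I)_*$: Lemma~\ref{Divisors2} on the $r$-th power of the divisor gives $N^{r}\,\deg^r(\varphi_i)/\deg^r(\varphi_j)$ at the cycle level, while the Atkin--Lehner eigenvalue of the weight-$(r+2)$ form (after the Tate twist) contributes $(-1)^{\frac{r+2}{2}}\epsilon_L N^{r/2}$ at the cohomological level, and dividing the first by the second produces $(-1)^{\frac{r+2}{2}}\epsilon_L N^{r/2}\deg^r(\varphi_i)/\deg^r(\varphi_j)$. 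The $\deg^{-r}(\sigma)$ then enters separately from applying Lemma~\ref{Divisors2} to $\sigma\times I$. So the $N^{r/2}$ is not a pure ``degree of isogeny'' contribution but the result of a cancellation between the cycle-level and modular-form-level normalizations. Also note ``$D_{A_j,\varphi_j}=D_{A_j^\tau,\varphi_j^\tau}$'' is not what one uses; the relevant statement is $(\tau\times I)_*D_{A_j,\varphi_j}=D_{\tau A_j,\overline{\varphi_j}}$. Your observation that $\tau$ commutes with $e_r$ (since $e_r$ is a correspondence over $\mathbb{Q}$) is correct and worth keeping, even though the paper leaves it implicit.
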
 
\begin{proof}

We have that $(\tau \times I)_* (D_{A_j, \varphi_j}) =  D_{\tau(A_j), \overline{\varphi_j}}$.  Article \cite{gross1984heegner} shows that $\tau A_j =  W_N(\sigma A_j)$ for some $\sigma$ in $G(K_{j}/K),$ hence 
$$(\tau \times I)_* (D_{A_j, \varphi_j}^r)=  D_{\tau(A_j),\overline{\varphi_j}}^r=  D_{ W_N (\sigma A_j), W_N \circ \sigma \circ \varphi_j}^r.$$
Consider the map
$$ W \times I : W_N \times A \longrightarrow  W_N \times A: ((E,P),A) \longrightarrow ((E/ \langle P \rangle,P'),A),$$
where $P'$ is such that the Weil pairing $<P,P'>$ of $P$ with $P'$ satisfies $<P,P'>=\zeta_N$ for some choice $\zeta_N$ of an $N$-th root of unity $\zeta_N$.  
Note that $W$ has degree $N$.
Also, $$W_{*} f(\tau) d_{\tau} d_{z} = (-1)^{\frac{r+2}{2}} \epsilon_L \ N^{r/2} f(\tau)  d_{\tau} d_{z} .$$ 
This implies as in \cite[Proposition~6.2]{nekovar1992kolyvagin} that $$(W \times I )_{*} D_{W_N( \sigma A_j), W_N \circ \sigma \circ \varphi_j}^{r} =(-1)^{\frac{r+2}{2}} \epsilon_L \ N^{r/2} D_{W_N( \sigma A_j), W_N \circ \sigma \circ \varphi_j}^r,$$ 
while Proposition \ref{Divisors2} implies that the former equals $ N^r \dfrac{ \deg^r (\varphi_i)}{\deg^r (\varphi_j)}  D_{ \sigma A_j,\sigma \circ \varphi_j}^r. $
Hence,
$$D_{W_N( \sigma A_j),W_N \circ \sigma \circ \varphi_j}^r=  (-1)^{\frac{r+2}{2}} \epsilon_L \ k_1 \ N^{r/2} D_{ \sigma A_j,\sigma \circ \varphi_j}^r,$$ where $k_1=\dfrac{ \deg^r (\varphi_i)}{\deg^r (\varphi_j)}.$
Applying Proposition \ref{Divisors2} to the map $(\sigma \times I)$, we obtain $$(\sigma \times I)_*(D_{A_j,\varphi_j}^{r})=k_2 D_{\sigma A_j,\sigma \circ \varphi_j}^r,$$ where $k_2= \deg^r( \sigma) \dfrac{ \deg^r (\varphi_i)}{\deg^r (\varphi_j)} =\deg^r( \sigma) \  k_1$.
Hence, $$(\tau \times I)_*(D_{A_j,\varphi_{j}}^{r}) = D_{ W_N (\sigma A_j),W_N \circ \sigma \circ \varphi_j}^r=(-1)^{\frac{r+2}{2}} \epsilon_L \ N^{r/2} \ \deg^{-r}( \sigma)(\sigma \times I)_*(D_{A_j,\varphi_{j}}^{r}).$$
Therefore $$\tau  \Phi(\Delta_{\varphi_{j}})_{\wp}=(-1)^{\frac{r+2}{2}} \epsilon_L \ N^{r/2}\deg^{-r}( \sigma) \sigma \Phi(\Delta_{\varphi_{j}})_{\wp} .$$
\end{proof}

\begin{lemma} \label{nonvanishing}
Let $\epsilon=(-1)^{\frac{r+2}{2}} \epsilon_L $ and $k=\deg^{-r}( \sigma) \ N^{r/2}$. Then 
$$ \tau P(\varphi_{1})^{\psi}= \epsilon k \sigma P(\varphi_{1})^{\overline{\psi}} \ \hbox{ and }  \ \tau  P(\varphi_{q})^{\psi}= -\epsilon  k \sigma P(\varphi_{q})^{\overline{\psi}}.$$
\end{lemma}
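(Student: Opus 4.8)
The plan is to combine Lemma~\ref{ComplexConjugation2} with the definition of the twisting operation $c \mapsto c^{\psi}$ and the properties of the Kolyvagin classes $P(\varphi_n)$, tracking carefully how complex conjugation interacts with the sum over ideal classes. First I would recall that $P(\varphi_n) = \cor_{L_1,K} c(\varphi_n)$ where $\res_{L_1,L_n} c(\varphi_n) = D_n y_{\varphi_n}$, so that the action of $\tau$ on $P(\varphi_n)$ is governed, via corestriction, by the action of $\tau$ on $\Phi_{f,\psi,L_n}(\Delta_{\varphi_n})_\wp$ described in Lemma~\ref{ComplexConjugation2}. The content of that lemma is the identity $\tau \Phi(\Delta_{\varphi_j})_\wp = \epsilon\, k'\, \sigma\, \Phi(\Delta_{\varphi_j})_\wp$ with $\epsilon = (-1)^{\frac{r+2}{2}}\epsilon_L$ and $k' = N^{r/2}\deg^{-r}(\sigma)$ for a suitable $\sigma \in \Gal(K_j/K)$; pushing this through $\cor_{L_1,K}$ and the Kolyvagin operator $D_n$ (which commutes appropriately with the Galois action) gives a relation of the shape $\tau P(\varphi_n) = \epsilon\, k\, \sigma\, P(\varphi_n)$ at the level of the untwisted classes, possibly after absorbing a sign coming from how $\sigma_\ell$ and $\tau$ interact on $\Gal(K_n/K_1)$ — this is the source of the extra $-1$ in the $q$-case.

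Next I would analyze the effect of $\tau$ on the operator $c \mapsto c^{\psi} = \frac{1}{h}\sum_i \psi^{-1}(\mathfrak{a}_i)\, \mathfrak{a}_i \cdot c$. The key computation is that conjugating by $\tau$ sends the ideal class $[\mathfrak{a}_i]$ to $[\overline{\mathfrak{a}_i}] = [\mathfrak{a}_i]^{-1}$ (in the class group), and correspondingly $\psi^{-1}(\mathfrak{a}_i)$ gets replaced, up to the infinity-type normalization factor $N(\mathfrak{a}_i)^{r}$ which is exactly what the constant $k = \deg^{-r}(\sigma) N^{r/2}$ and the level-$N$ cyclic isogeny of degree dividing $N$ encode, by a quantity matching $\overline{\psi}$ against the conjugated classes. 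Concretely, using $\psi \overline{\psi} = N^{r}$ on ideals (the infinity type $(r,0)$ condition), one gets $\tau$-conjugation of the $\psi$-projector equal to the $\overline{\psi}$-projector composed with the Galois element $\sigma$ and the scalar $k$. Combining this with the untwisted relation from the previous paragraph yields $\tau P(\varphi_1)^{\psi} = \epsilon k\, \sigma P(\varphi_1)^{\overline{\psi}}$, and the same computation for $n = q$ carries the additional sign because $q$ is a Kolyvagin prime (inert in $K$, so $\tau$ acts on $G_q \simeq \Gal(K_q/K_1)$ by inversion, and $D_q$ picks up the sign $-1$ under $\sigma_\ell \mapsto \sigma_\ell^{-1}$ via the relation $\tau D_\ell \tau^{-1} \equiv -D_\ell \bmod \Tr_\ell$), giving $\tau P(\varphi_q)^{\psi} = -\epsilon k\, \sigma P(\varphi_q)^{\overline{\psi}}$.

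I would organize the proof as follows: (i) state and verify the lift of Lemma~\ref{ComplexConjugation2} to the Kolyvagin classes $P(\varphi_n)$, being careful that $\cor_{L_1,K}$ intertwines the $\tau$-action correctly and that $\sigma \in \Gal(K_n/K)$ restricts compatibly; (ii) compute $\tau \cdot (\mathfrak{a}_i \cdot c) = \overline{\mathfrak{a}_i} \cdot (\tau c)$ and reindex the sum over $i$, using that $\{[\overline{\mathfrak{a}_i}]\}$ is again a full set of class representatives and invoking the stated independence of $c^\psi$ from the choice of representatives; (iii) match the Hecke-character factors using infinity type $(r,0)$ and absorb the norm factors into $k$; (iv) track the sign discrepancy between $n=1$ and $n=q$ coming from the action of $\tau$ on the Kolyvagin derivative operator $D_q$ for an inert Kolyvagin prime. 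The main obstacle I anticipate is step (iv): getting the sign bookkeeping exactly right requires knowing precisely how $\tau$ conjugates $\sigma_q$ in $\Gal(L_q/L_1)$ and how this propagates through $D_q = \sum_{i=1}^{q} i\, \sigma_q^i$ modulo $\Tr_q$ and modulo $p$ — the relation $\tau \sigma_q \tau^{-1} = \sigma_q^{-1}$ together with $\sum i\, \sigma_q^{-i} \equiv -\sum i\, \sigma_q^i \bmod (\Tr_q, q+1)$ is what produces the crucial minus sign that distinguishes the two formulas, and this is exactly the input needed later to force $S^{\overline\psi}$ to have rank one rather than two.
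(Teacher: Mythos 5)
Your proposal follows the same route the paper takes: (a) deduce a relation of the form $\tau P(\varphi_n)=\epsilon k \sigma P(\varphi_n)$ from Lemma~\ref{ComplexConjugation2}, (b) prove $\tau\cdot c^{\psi}=(\tau c)^{\overline{\psi}}$ and $\sigma\cdot c^{\psi}=(\sigma c)^{\psi}$ by conjugating through the sum over ideal classes, and (c) pick up the extra minus sign in the $q$-case from $\tau D_q\equiv -D_q\tau\pmod p$. Steps (i), (ii) and (iv) are correct and match the paper's (very terse) argument; in particular your computation that $\tau\sigma_q\tau^{-1}=\sigma_q^{-1}$ and $\sum_{i=1}^{q}i\sigma_q^{-i}=(q+1)(\Tr_q-1)-D_q\equiv -D_q\pmod{(p,\Tr_q)}$ is the right derivation of the sign that the paper states as the identity $\tau D_q=-D_q\tau$.

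There is, however, a genuine misconception in step (iii). You write that the reindexing $\mathfrak{a}_i\mapsto\overline{\mathfrak{a}_i}$ produces an infinity-type factor $N(\mathfrak{a}_i)^r$ that must be ``absorbed into $k=\deg^{-r}(\sigma)N^{r/2}$''. This cannot work as stated: $N(\mathfrak{a}_i)^r$ depends on $i$ while $k$ is a single constant, so no such absorption is possible. In fact no residual factor appears at all. Writing $\mathfrak{b}_i=\overline{\mathfrak{a}_i}$ and using $\mathfrak{b}_i\overline{\mathfrak{b}_i}=(N\mathfrak{b}_i)$ together with the infinity type, one has $\psi^{-1}(\overline{\mathfrak{b}_i})=\psi(\mathfrak{b}_i)/N(\mathfrak{b}_i)^r=\overline{\psi}^{-1}(\mathfrak{b}_i)$ exactly, so the reindexed sum is precisely $(\tau c)^{\overline{\psi}}$ with no leftover constant. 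The constant $k$ in the lemma comes entirely from Lemma~\ref{ComplexConjugation2} — from the Atkin--Lehner correspondence $W_N$ of degree $N$ and the element $\sigma$ satisfying $\tau A_j=W_N(\sigma A_j)$, combined with Lemma~\ref{Divisors2} — and has nothing to do with the $\psi$-projection. If you try to split $k$ between the two sources you will double-count it; keep it on the geometric side and treat the reindexing in step (ii) as exact.
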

\begin{proof}
For an element $c$ in $H^1(K, V_{\wp}/p)$, we have that
\begin{align*}
& \tau \cdot c^{\psi} =\dfrac{1}{h} \sum_{i =1}^h \ \overline{\psi}^{-1}(\mathfrak{a}_i) \ \tau \cdot (\mathfrak{a}_i \cdot c ) 
 =\dfrac{1}{h} \sum_{i =1}^h \ \overline{\psi}^{-1}(\mathfrak{a}_i) \ \tau (\mathfrak{a}_i \cdot c ) \tau^{-1} 
 = \dfrac{1}{h} \sum_{i =1}^h \ \overline{\psi}^{-1}(\mathfrak{a}_i) \ \mathfrak{a}_i \cdot \tau  c  \tau^{-1}
 = (\tau c)^{\overline{\psi}}.
\end{align*}
For $c=P(\varphi_j)$ and $\sigma$ in $\Gal(K_j/K)$, we have that
\begin{align*}
& \sigma \cdot c^{\psi} =\dfrac{1}{h} \sum_{i =1}^h \ \psi^{-1}(\mathfrak{a}_i) \ \sigma \cdot (\mathfrak{a}_i \cdot c ) 
 =\dfrac{1}{h} \sum_{i =1}^h \ \psi^{-1}(\mathfrak{a}_i) \ \sigma(\mathfrak{a}_i \cdot c ) \sigma^{-1} 
 = \dfrac{1}{h} \sum_{i =1}^h \ \psi^{-1}(\mathfrak{a}_i) \ \mathfrak{a}_i \cdot \sigma  c  \sigma^{-1}
= (\sigma c)^{\psi}.
\end{align*}
Hence $$\tau P(\varphi_{1})^{\psi}= (\tau P(\varphi_{1}))^{\overline{\psi}}= \epsilon k \sigma P(\varphi_{1})^{\overline{\psi}}.$$
The identity $\tau D_q =- D_q \tau$ indicates that
$$\tau  P(\varphi_{q})^{\psi}=  (\tau P(\varphi_{q}))^{\overline{\psi}} = -\epsilon  k \sigma P(\varphi_{q})^{\overline{\psi}}.$$
\end{proof}
For $(v_{11},v_{12},v_{21},v_{22})$ in $V_1^{\psi}  V_1^{\overline{\psi}} V_2^{\psi} V_2^{\overline{\psi}} $,  $$\tau (v_{11},v_{12},v_{21},v_{22})\tau = (\epsilon  k \sigma \tau v_{12},  \epsilon  k \sigma \tau v_{11},- \epsilon  k \sigma \tau v_{22},- \epsilon k \sigma \tau v_{21}).$$
We define $$U_0= \{  (v_{11},v_{12},v_{21},v_{22}) \ | \ \epsilon  k \sigma \tau v_{1i} + v_{1i} , - \epsilon  k \sigma \tau v_{2j} + v_{2j} ,\ i,j=1,2 \ \hbox{ generate } V_{\wp}/p  \} .$$
Then $U_0^{+}$ generates $V_0^{+}$.
We let $$L(U_0)=\{ \ell \hbox{ rational prime } | \ \Frob_{\ell}(H_0/\mathbb{Q})=[\tau u ], u \in U_0 \}.$$

\section{Local to Global study}
\textbf{Local Tate duality.}
Let $K_{\lambda}$ be a local field with residue field $F_q$ and let $A$ be a finite group with an unramified action of $\Gal(\overline{K_{\lambda}}/K_{\lambda})$ killed
by a prime $p$. 
Assume $p$ divides $q - 1$ so that $ \mu_p \subset K_{\lambda}$ and let $A' = \Hom(A,\mu_p)$.
We denote by $K_{\lambda}^t$, the maximal tamely ramified extension of $K_{\lambda}$, and by $H^1_{ur}(K_{\lambda},*),$ the group $H^1(K_{\lambda}^{ur}/K_{\lambda},*).$ 
The natural pairing $A \times A' \longrightarrow \mu_p$ yields the cup product pairing
$$ H^1( K_{\lambda} , A) \times H^1( K_{\lambda} , A') \longrightarrow H^2(K_{\lambda}, \mu_p) = \mathbb{Z}/p\mathbb{Z}$$
which induces a perfect local Tate pairing
$$H^1(K_{\lambda}^{ur}/K_{\lambda},A) \times  H^1( K_{\lambda} , A')/H^1(K_{\lambda}^{ur}/K_{\lambda},A) \longrightarrow \mathbb{Z}/p\mathbb{Z}.$$
Let $$\alpha: H^1(K_{\lambda}^{ur}/K_{\lambda},A) \xrightarrow{\sim} A/(\phi-1)A$$ be the evaluation map at the Frobenius element $\phi$ where $$Gal(K_{\lambda}^{ur}/K_{\lambda})= <\phi>.$$ Then $\alpha$ is an isomorphism.
The exact sequence of Galois groups $$0 \longrightarrow \Gal(\overline{K_{\lambda}}/K_{\lambda}^t) \longrightarrow \Gal(\overline{K_{\lambda}}/K_{\lambda}^{ur}) \longrightarrow \Gal(K_{\lambda}^t/K_{\lambda}^{ur}) \longrightarrow 0$$ induces the exact sequence
$$H^1(K_{\lambda}^t/K_{\lambda}^{ur},A') \longrightarrow H^1(K_{\lambda}^{ur},A') \longrightarrow H^1(K_{\lambda}^t,A') \longrightarrow 0,$$
where $H^1(K_{\lambda}^t,A')=0$ since $\Gal(\overline{K_{\lambda}}/K_{\lambda}^t)$ is a pro-$q$ group.  
Therefore, $$H^1(K_{\lambda}^{ur},A') \simeq H^1(K_{\lambda}^t/K_{\lambda}^{ur},A') \simeq \Hom(\mathbb{Z}/p\mathbb{Z}(1),A') \simeq \Hom(\mu_p, A').$$
Hence we have an isomorphism $$ H^1(K_{\lambda}^{ur},A') \xrightarrow{\sim}  \Hom(\mu_p, A') .$$
The exact sequence of Galois cohomology groups
$$0 \longrightarrow H^1(K_{\lambda}^{ur}/K, A') \longrightarrow H^1(K_{\lambda},A') \longrightarrow H^1(K_{\lambda}^{ur},A')^{\phi}\longrightarrow 0$$ allows us to identify  $H^1( K_{\lambda} , A')/H^1(K_{\lambda}^{ur}/K_{\lambda},A')$ with $$H^1(K_{\lambda}^{ur},A')^{\phi} \simeq \Hom(\mu_p, A')^{\phi}.$$
Hence, we obtain a perfect local pairing
$$\langle \ \cdot \ ,\ \cdot \ \rangle_{\lambda} \ : \  H^1(K_{\lambda}^{ur}/K_{\lambda},A) \times  H^1(K_{\lambda}^{ur},A')^{\phi} \longrightarrow \mathbb{Z}/p\mathbb{Z}.$$

\textbf{Set up of the proof.}
Given a Kolyvagin prime $\ell$, the Frobenius condition implies that it is inert in $K$. We denote by $\lambda$ the prime of $K$ lying above $\ell$.  
We have a perfect local pairing
$$\langle \ . \ , \ . \ \rangle_{\lambda}:H^1( K^{ur}_{\lambda}/K_{\lambda},(V_{\wp}/p)^{I_{\lambda}}) \times H^1(K_{\lambda}^{ur}, V_{\wp}/p) \longrightarrow \mathbb{Z}/p,$$ where $I_{\lambda}= \Gal(\overline{K_{\lambda}}/K_{\lambda}^{ur})$
and $\mathcal{O}_{F,\wp}$-linear isomorphisms
\begin{align} \label{TateIsomorphisms}
\{ H^1(K_{\lambda}^{ur}, V_{\wp}/p)\}^{dual} \simeq H^1(K^{ur}_{\lambda}/K_{\lambda},(V_{\wp}/p)^{I_{\lambda}}) \simeq (V_{\wp}/p)^{I_{\lambda}}/(\phi-1),
\end{align}
where $\phi$ generates $\Gal(K^{ur}_{\lambda}/K_{\lambda})$.
We denote by $$res_{\lambda}: H^1(K,V_{\wp}/p) \longrightarrow  H^1(K_{\lambda},V_{\wp}/p)$$ the restriction map from $H^1(K,V_{\wp}/p)$ to $H^1(K_{\lambda},V_{\wp}/p)$ induced by the embedding $$\Gal(\overline{K_{\lambda}}/K_{\lambda}) \hookrightarrow \Gal(\overline{K}/K).$$
Restricting the domain of $res_{\lambda}$ to the Selmer group $S$, we obtain
$$res_{\lambda}: S\longrightarrow  H^1(K^{ur}_{\lambda}/K_{\lambda},(V_{\wp}/p)^{I_{\lambda}}) .$$
Taking the $\mathbb{Z}/p\mathbb{Z}$-dual of this map, we obtain a homomorphism $$\omega_\ell:   H^1(K_{\lambda}^{ur}, V_{\wp}/p) \longrightarrow S^{dual}.$$
We denote $$X_{\ell} = \omega_\ell( H^1(K_{\lambda}^{ur}, V_{\wp}/p) )$$ in $S^{dual}$. 
We aim to bound the size of $S^{dual, \psi}$ by studying the set $$\{ X_\ell^{\psi} \}_{ \ell \in L(U_0)} .$$

\begin{proposition}\label{SDual2}
The modules $\{ X_{\ell} \}_{\ell \in L(U_0)}$ generate $S^{dual}$.
\end{proposition}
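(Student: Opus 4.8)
The plan is to argue by contradiction using the global pairing \eqref{global pairing 2} together with the local Tate duality set up above. Suppose that the submodule generated by $\{X_\ell\}_{\ell \in L(U_0)}$ is a proper submodule of $S^{dual}$. Since $S^{dual}$ is a module over the local ring $\mathcal{O}_{F,\wp}/p$ (a field, as $\wp$ divides the rational prime $p$ and we work modulo $p$), a proper submodule is contained in the kernel of some nonzero functional, i.e.\ there is a nonzero element $s \in S = (S^{dual})^{dual}$ which is annihilated by every $X_\ell$, $\ell \in L(U_0)$. By the very definition of $X_\ell = \omega_\ell(H^1(K_\lambda^{ur},V_\wp/p))$ as the image of the dual of $\res_\lambda : S \to H^1(K_\lambda^{ur}/K_\lambda, (V_\wp/p)^{I_\lambda})$, the condition that $s$ pairs to zero with all of $X_\ell$ is equivalent to $\res_\lambda(s) = 0$ in $H^1(K_\lambda^{ur}/K_\lambda,(V_\wp/p)^{I_\lambda})$ for every Kolyvagin prime $\ell \in L(U_0)$.

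Next I would translate this vanishing of all local restrictions into a statement about the global extension $L(s)/L$. Via Lemma \ref{dickson}, $s$ restricts to a nonzero homomorphism $h_s \in \Hom_{\Gal(L/K)}(\Gal(\overline{\mathbb{Q}}/L), V_\wp/p)$, cutting out the field $L(s)$. Using the isomorphism $\alpha$ of \eqref{TateIsomorphisms} (evaluation at Frobenius), the condition $\res_\lambda(s) = 0$ forces the image of $\Frob_\lambda$ under $h_s$ to lie in $(\phi-1)(V_\wp/p)^{I_\lambda}$; since $\ell$ is inert in $K$ and splits completely in $L$ (as $\Frob_\ell(L/\mathbb{Q}) = \Frob_\infty(L/\mathbb{Q})$ by \eqref{Condition20} and $\tau^2 = \mathrm{Id}$), the relevant Frobenius acts trivially on $V_\wp/p$, so $\res_\lambda(s) = 0$ is precisely the statement that $\Frob_\lambda$ is trivial in $\Gal(L(s)/L)$, i.e.\ $\lambda$ splits completely in $L(s)$. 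So $s \ne 0$ produces a class whose associated extension $L(s)/L$ is split at all primes above $L(U_0)$-primes.

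Now I would derive a contradiction from Cebotarev's density theorem. The set $L(U_0)$ was defined by the Frobenius condition $\Frob_\ell(H_0/\mathbb{Q}) = [\tau u]$ with $u \in U_0$, a condition describing a union of conjugacy classes in $\Gal(H_0/\mathbb{Q})$ of positive density; enlarging the field from $H_0$ to the compositum $H_0 \cdot L(s)$, one checks — using that $V_\wp/p$ is a simple $\Aut_{L_1}(V_\wp/p)$-module (Lemma \ref{GaloisGroup2} and the hypotheses of Theorem \ref{theorem 2}) — that the constraint "$\lambda$ splits in $L(s)$" is not automatically implied by membership in $L(U_0)$, so Cebotarev produces a prime $\ell \in L(U_0)$ with $\Frob_\lambda$ nontrivial in $\Gal(L(s)/L)$, i.e.\ $\res_\lambda(s) \ne 0$. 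This contradicts $\res_\lambda(s) = 0$ for all $\ell \in L(U_0)$, so no such nonzero $s$ exists and $\{X_\ell\}_{\ell \in L(U_0)}$ generates $S^{dual}$.

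The main obstacle I anticipate is the compatibility of the Galois-module structure at the last step: one must verify that the extension $L(s)/L$ (which a priori lives inside the fixed field cut out by $r(S)$, and hence inside $L^S$) is \emph{linearly disjoint enough} from $H_0$ that the Frobenius condition defining $L(U_0)$ does not force splitting in $L(s)$. This is where the irreducibility hypothesis on $V_\wp/p$ as an $\Aut_{L_1}$-module is essential: it guarantees that $\Gal(L(s)/L) \simeq V_\wp/p$ is a simple Galois module with no proper invariant sub-extensions that could be "captured" by the conditions imposed at $H_0$, so the relevant conjugacy class in $\Gal(H_0 \cdot L(s)/\mathbb{Q})$ mapping to $[\tau u]$, $u \in U_0$, and to a nontrivial element of $\Gal(L(s)/L)$ is nonempty. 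Once this disjointness is in hand, the Cebotarev argument and the local Tate duality bookkeeping are routine.
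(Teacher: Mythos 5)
Your opening reductions match the paper's: you correctly dualize to reduce the claim to injectivity of $\{\res_\lambda\}_{\ell \in L(U_0)}$ restricted to $S$, and you correctly translate $\res_\lambda(s)=0$ (via the Tate duality isomorphisms and the fact that $\ell$ splits completely in $L$ for a Kolyvagin prime) into the statement that $\Frob_\lambda$ is trivial in $\Gal(L(s)/L)$. The gap is in the final Cebotarev step. You invoke linear disjointness of $L(s)$ from $H_0$ over $L$, but nothing forces this, and it can fail: a nonzero $s$ could have $L(s) \subseteq H_0$ (for instance if $s$ lies in the span of the Kolyvagin classes whose extensions build $H_0$), and then the Frobenius in $\Gal(L(s)/L)$ is \emph{determined} by the condition $\Frob_\ell(H_0/\mathbb{Q})=[\tau u]$, $u\in U_0$. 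Simplicity of $V_\wp/p$ as a Galois module controls its submodule structure; it says nothing directly about the position of $L(s)$ relative to $H_0$, so it cannot, by itself, guarantee the nonemptiness of the conjugacy class you need.

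The paper sidesteps disjointness entirely. Working in $\widetilde{H_0}$, the compositum of $H_0$ with the field through which $s$ factors, it takes any $x\in\Gal(\widetilde{H_0}/L)$ with $x|_{H_0}\in U_0$, uses Cebotarev to produce $\ell\in L(U_0)$ with $\Frob_\ell(\widetilde{H_0}/\mathbb{Q})=[\tau x]$, and then computes $\Frob_{\lambda_L}(\widetilde{H_0}/L)=(\tau x)^2=x^{\tau}x=(x^{+})^2$, so that $\res_\lambda(s)=0$ forces $s(x^{+})=0$. Since $U_0^{+}$ generates $V_0^{+}$, this shows $s$ kills $\Gal(\widetilde{H_0}/L)^{+}$ and hence $\Image(s)\subseteq (V_\wp/p)^{-}$, the $(-1)$-eigenspace of $\tau$. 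Only now does simplicity enter, and it enters correctly: if $s\neq 0$ then $\Image(s)$, being a nonzero $G$-equivariant image, would have to be all of $V_\wp/p$, contradicting its containment in the proper subspace $(V_\wp/p)^{-}$. This $\pm$-eigenspace computation is what your proposal is missing; without it you have not shown that membership in $L(U_0)$ is compatible with nontriviality of $\Frob_\lambda$ on $L(s)$ when $L(s)$ and $H_0$ overlap.
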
 
\begin{proof}
Let $G=\Gal(L/K). $ Consider an element $s$ of $ S$. The restriction map
$$H^1(K, V_{\wp}/p) \xrightarrow{\res} H^1(L,V_{\wp}/p)^{G}$$
is injective by Lemma \eqref{dickson} as
$$ \Ker(\res)= H^1(L/K, V_{\wp}/p)=0.$$
We identify $s$ with its image by restriction in $$H^1(L,V_{\wp}/p)^G \subset \Hom_{G}(L, V_{\wp}/p).$$
We will show that $$\{ res_{\lambda} \}_{\ell \in L(U_0) }: S\longrightarrow \{ H^1(K^{ur}_{\lambda}/K_{\lambda},(V_{\wp}/p)^{I_{\lambda}})  \}_{\ell \in L(U_0) }$$ is injective. As a consequence, the induced map between the duals $$\{  H^1(K_{ \lambda}^{ur}, V_{\wp}/p) \}_{\ell \in L(U_0) } \longrightarrow  S^{dual}$$ is surjective. Hence, it is enough to show that $res_{\lambda}( s)=0$ for all $ \ell \in L(U_0)$ implies $ s=0$.
Consider $\widetilde{H_0}$, the minimal Galois extension of $\mathbb{Q}$ containing $H_0$ such that $s$ factors through $\Gal(\widetilde{H_0}/ L)$. Let $x$ be an element of $\Gal(\widetilde{H_0}/L)$ such that $x|_{H_0}$ belongs to $ U_0$.
By Cebotarev's density theorem, there exists $\ell$ in $L(U_0)$ such that $\Frob_{\ell}(\widetilde{H_0}/ \mathbb{Q})=[\tau x ].$
The hypothesis $res_{\lambda}(s)=0$ implies that $s(\Frob_{\lambda_L}(\widetilde{H_0}/L))=0$ for $\lambda_L$ 
above $\ell$ in $L$ since $\Frob_{\lambda_L}(\widetilde{H_0}/L)$ is a generator of
 $\Gal(\widetilde{H}_{0,\lambda_{\widetilde{H_0}}}/L_{\lambda_L})$, where
  $\lambda_{\widetilde{H_0}}$ is above $\lambda_L$ in $\widetilde{H_0}$. 
In fact,
$$\Frob_{\lambda_L}(\widetilde{H_0}/L)= (\tau x )^{|D(L/\mathbb{Q})| }=(\tau x)^2=x^{\tau}x=(x^{+})^2,$$
where $|D(L/\mathbb{Q})|$ is the order of the decomposition group $D(L/\mathbb{Q})$, also the order of the residue extension.
Therefore, $s(x^{+})=0$ for all $x \in \Gal(\widetilde{H_0}/L)$ such that $x|_{H_0}$ belongs to $U_0$.
Since $U_0^{+}$ generates $V_0^{+}$, we have that $s$ vanishes on $\Gal(\widetilde{H_0}/L)^{+}.$
Hence, $\mathrm{Im}(s)$ lies in $V_{\wp}/p^{-}$, the minus eigenspace of $V_{\wp}/p$ for the action of $\tau$.
In particular, it cannot be a proper $G$-submodule of $V_{\wp}/p$.
\end{proof}

\begin{proposition} \label{Generators2}
The elements $$res_{\lambda} P(\varphi_\ell)^{\psi} \hbox{ and } res_{\lambda} P( \varphi_{\ell q})^{\psi} $$ generate $ H^1(K_{\lambda}^{ur}, V_{\wp}/p)^{\psi}$.
\end{proposition}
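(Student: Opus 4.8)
Our strategy is to pull the statement back, via the finite--singular comparison isomorphism of Lemma~\ref{Corestriction2}, to an assertion about the \emph{unramified} classes $\res_{\lambda}P(\varphi_1)$ and $\res_{\lambda}P(\varphi_q)$, and then to compute those by evaluating the cocycles $P(\varphi_1)^{\psi}$ and $P(\varphi_q)^{\psi}$ at a Frobenius element. Since $\ell\in L(U_0)$, the restriction to $L$ of $\Frob_{\ell}(H_0/\mathbb{Q})$ is $\Frob_{\infty}(L/\mathbb{Q})$, so $\ell$ is a Kolyvagin prime by \eqref{Condition20} and $\lambda$ is inert in $K$; we may also assume $\ell\neq q$. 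By Proposition~\ref{Ramification2}(2) the classes $\res_{\lambda}P(\varphi_1)$ and $\res_{\lambda}P(\varphi_q)$ lie in $H^1(K^{ur}_{\lambda}/K_{\lambda},V_{\wp}/p)$, and Lemma~\ref{Corestriction2}, applied with $m=1$ and with $m=q$, yields an $\mathcal{O}_{F,\wp}/p$-linear isomorphism $\beta$ between $H^1(K^{ur}_{\lambda}/K_{\lambda},V_{\wp}/p)$ and $H^1(K^{ur}_{\lambda},V_{\wp}/p)$ under which $\res_{\lambda}P(\varphi_1)$ corresponds to $\res_{\lambda}P(\varphi_{\ell})$ and $\res_{\lambda}P(\varphi_q)$ corresponds to $\res_{\lambda}P(\varphi_{\ell q})$. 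Because $\beta$ is induced by the natural module-theoretic comparison maps, it commutes with the operators $\mathfrak{a}_i\cdot$ defining $(\cdot)^{\psi}$, so it also matches $\res_{\lambda}P(\varphi_1)^{\psi}$ with $\res_{\lambda}P(\varphi_{\ell})^{\psi}$ and $\res_{\lambda}P(\varphi_q)^{\psi}$ with $\res_{\lambda}P(\varphi_{\ell q})^{\psi}$. As an isomorphism sends generating sets to generating sets, it suffices to prove that $\res_{\lambda}P(\varphi_1)^{\psi}$ and $\res_{\lambda}P(\varphi_q)^{\psi}$ generate $H^1(K^{ur}_{\lambda}/K_{\lambda},V_{\wp}/p)^{\psi}$.

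Next I would pin down this target. Since $V_{\wp}/p$ is unramified at $\lambda$ and, $\ell$ being a Kolyvagin prime, $\phi=\Frob_{\lambda}=\Frob_{\ell}^{2}$ acts as $\tau^{2}=\mathrm{id}$ modulo $p$, the Tate isomorphism \eqref{TateIsomorphisms} identifies $H^1(K^{ur}_{\lambda}/K_{\lambda},V_{\wp}/p)$ with $(V_{\wp}/p)^{I_{\lambda}}/(\phi-1)=V_{\wp}/p$, a free $\mathcal{O}_{F,\wp}/p$-module of rank $4$ whose $\psi$-part $(V_{\wp}/p)^{\psi}$ is free of rank $2$; under this identification the restriction of an unramified class $c$ corresponds to its value $c(\Frob_{\lambda_L})$ on the Frobenius of a prime $\lambda_L$ of $L$ above $\lambda$. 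This is legitimate because $\Frob_{\lambda}(L/K)=\tau^{2}=\mathrm{id}$, so $\lambda$ splits completely in $L$ and $\Frob_{\lambda_L}$ is a genuine element of $\Gal(\overline{\mathbb{Q}}/L)$ on which the homomorphism $c|_{L}$ can be evaluated. Hence the task becomes to show that $P(\varphi_1)^{\psi}(\Frob_{\lambda_L})$ and $P(\varphi_q)^{\psi}(\Frob_{\lambda_L})$ generate $(V_{\wp}/p)^{\psi}$.

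Finally, writing $\Frob_{\ell}(H_0/\mathbb{Q})=[\tau u]$ with $u=(v_{11},v_{12},v_{21},v_{22})\in U_0$, the same computation as in the proof of Proposition~\ref{SDual2} gives $\Frob_{\lambda_L}(H_0/L)=(\tau u)^{2}=u^{\tau}u$. Identifying $\Gal(H_0/L)$ with $V_1^{\psi}\times V_1^{\overline{\psi}}\times V_2^{\psi}\times V_2^{\overline{\psi}}$ (valid since $H_0$ is the compositum of the four corresponding fields, which are linearly disjoint over $L$ by Proposition~\ref{Disjoint2} together with its $\overline{\psi}$-counterpart and the splitting $V_{\wp}/p=(V_{\wp}/p)^{\psi}\oplus(V_{\wp}/p)^{\overline{\psi}}$), and using the formula for the conjugation action of $\tau$ derived from Lemma~\ref{nonvanishing}, the $V_1^{\psi}$- and $V_2^{\psi}$-components of $u^{\tau}u$ are exactly $P(\varphi_1)^{\psi}(\Frob_{\lambda_L})$ and $P(\varphi_q)^{\psi}(\Frob_{\lambda_L})$. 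By the defining property of $U_0$, the four components of $u^{\tau}u$ generate $V_{\wp}/p=(V_{\wp}/p)^{\psi}\oplus(V_{\wp}/p)^{\overline{\psi}}$, so projecting onto the $\psi$-summand its two $\psi$-components generate $(V_{\wp}/p)^{\psi}$; applying $\beta$ finishes the proof. The step I expect to be the main obstacle is the bookkeeping in these last two paragraphs: checking that $\beta$ is $\psi$-equivariant, and reconciling the Frobenius evaluation with the precise definition of $U_0$ --- in particular keeping track of $\epsilon$, $k$, $\sigma$ and $\tau$, and of the interchange of the $\psi$- and $\overline{\psi}$-eigenspaces caused by complex conjugation.
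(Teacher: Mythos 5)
Your proof takes essentially the same approach as the paper: both combine the Tate isomorphism \eqref{TateIsomorphisms} with the comparison isomorphism of Lemma \ref{Corestriction2} to reduce the claim to the corresponding statement about the unramified classes $\mathrm{res}_{\lambda}P(\varphi_1)^{\psi}$ and $\mathrm{res}_{\lambda}P(\varphi_q)^{\psi}$, and both conclude by evaluating on the Frobenius $\mathrm{Frob}_{\lambda_L}(H_0/L)=(\tau u)^2=u^{\tau}u$ for $u\in U_0$ and invoking the defining property of $U_0$. The only difference is presentational: the paper argues by contradiction against linear dependence in the free rank-two $\mathcal{O}_{F,\wp}/p$-module, applying Lemma \ref{Corestriction2} at the end, while you apply Lemma \ref{Corestriction2} at the outset and argue generation directly.
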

\begin{proof} 
We have $$ H^1(K_{\lambda}^{ur}, V_{\wp}/p) \simeq  V_{\wp}/p(K_{\lambda})$$ by \eqref{TateIsomorphisms}. The module $ V_{\wp}/p(K_{\lambda})^{\psi}$ is of rank 2 over $\mathcal{O}_{F,\wp}/p $, hence, so is $ H^1(K_{\lambda}^{ur}, V_{\wp}/p)^{\psi}$.
The Frobenius condition on $\ell$ implies that 
$$g_1=res_{\lambda} P(\varphi_1)^{\psi} \hbox{ and } g_2=res_{\lambda} P( \varphi_{ q})^{\psi}$$ are linearly independent in $H^1(K_{\lambda}^{ur}, V_{\wp}/p)^{\psi}$ over $\mathcal{O}_{F,\wp}/p $. 
Indeed, if they were linearly dependent, then 
\begin{equation*}
 g_1^{(\tau x_{11})^2} - g_1, \hbox{ and } g_2^{(\tau x_{21})^2} - g_2
 \end{equation*}
where $\Frob_{\ell}(H_0/\mathbb{Q})=  \tau u= (\tau x_{11},\tau x_{12}, \tau x_{21}, \tau x_{22})$
would also be linearly dependent. The Frobenius condition implies that 
\begin{align*}
\{ \Frob_\ell(H_i/L)= x_{i1}^{\tau } x_{i1}= (\tau x_{i1})^2, \ \ i =1,2 \}
\end{align*}
generate $(V_{\wp}/p)$, which yields a contradiction since $(\tau x_{11})^2$ acts on $res_{\lambda} P(\varphi_1)^{\psi} $ which generates $H_1^{\psi}$ by $ g_1^{(\tau x_{11})^2} - g_1$ and $(\tau x_{21})^2$ acts on $res_{\lambda} P( \varphi_{ q})^{\psi}$ which generates $H_1^{\psi}$ by $g_2^{(\tau x_{21})^2} - g_2$. 
Therefore, by Lemma \eqref{Corestriction2}, $res_{\lambda}P(\varphi_\ell )^{\psi}$ and $res_{\lambda} P( \varphi_{\ell q})^{\psi}$ are linearly independent in $ H^1(K_{\lambda}^{ur}, V_{\wp}/p)^{\psi}$ over $\mathcal{O}_{F,\wp}/p $.

\end{proof}

\section{Reciprocity law and local triviality}
In this section, we use the local reciprocity law as well as the local properties of the Kolyvagin cohomology classes $P(\varphi_n)$ to study the modules $X_{\ell}$ for $\ell$ in $L(U_0)$.

\begin{proposition} \label{Reciprocity2}
We have $$ \sum_{\lambda|\ell|n} <s_{\lambda}^{\psi},res_{\lambda} P(\varphi_n)^{\psi}>_{\lambda} = 0.$$
\end{proposition}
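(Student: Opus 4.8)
The plan is to read the identity off the global reciprocity law of class field theory, applied to the two global classes $s^{\psi}$ and $P(\varphi_n)^{\psi}$, where $s$ denotes the fixed element of the Selmer group $S$. Using the self-duality isomorphism $V_{\wp}/p \simeq \Hom(V_{\wp}/p,\mathbb{Z}/p(1))$ recorded in the proof of Proposition~\ref{Ramification2}, the cup product sends the pair $(s^{\psi},P(\varphi_n)^{\psi})$ to a class in $H^2(K,\mathbb{Z}/p(1)) \subset \mathrm{Br}(K)[p]$, and the exact sequence
$$0 \longrightarrow \mathrm{Br}(K) \longrightarrow \bigoplus_{v} \mathrm{Br}(K_v) \xrightarrow{\sum_v \mathrm{inv}_v} \mathbb{Q}/\mathbb{Z} \longrightarrow 0$$
forces
$$\sum_{v} \langle \res_v s^{\psi}, \res_v P(\varphi_n)^{\psi}\rangle_v = 0,$$
the sum running over all places $v$ of $K$ and $\langle\ ,\ \rangle_v$ being the local Tate cup-product pairing, which at the primes $\lambda$ dividing $n$ restricts to the pairing $\langle\ ,\ \rangle_{\lambda}$ used above. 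Along the way one records that forming $\psi$-components commutes with the localization maps and with all the local conditions invoked below; this is harmless because $\psi$ is unramified.

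Next I would show that every term of this sum vanishes except those attached to primes $\lambda \mid \ell \mid n$. The archimedean place contributes nothing since $K_{\infty}=\mathbb{C}$. For $v \nmid N N_A N_{\psi} n p$ the Selmer condition puts $\res_v s$ in $H^1(K_v^{ur}/K_v,V_{\wp}/p)$ and part~(2) of Proposition~\ref{Ramification2} puts $\res_v P(\varphi_n)$ in $H^1(K_v^{ur}/K_v,V_{\wp}/p)$; since $V_{\wp}/p$ is unramified at $v$ the unramified subspace is isotropic for $\langle\ ,\ \rangle_v$, so the term is zero. For $v \mid N N_A N_{\psi}$, part~(1) of Proposition~\ref{Ramification2} gives $\res_v P(\varphi_n)=0$, so the term is zero. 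For $v \mid p$ the Selmer condition puts $\res_v s$ in $H^1_f(K_v,V_{\wp}/p)$, and $\res_v P(\varphi_n)$ lies in $H^1_f(K_v,V_{\wp}/p)$ as well: indeed $P(\varphi_n)$ is obtained from the image of the $p$-adic \'etale Abel--Jacobi map $\Phi_{f,\psi,L_n}$ on a homologically trivial cycle, which is crystalline at the primes above $p$, by applying the group-ring operator $D_n$, the inverse of the restriction isomorphism $H^1(L_1,V_{\wp}/p) \xrightarrow{\sim} H^1(L_n,V_{\wp}/p)^{G_n}$, the corestriction $\cor_{L_1,K}$, and reduction modulo $p$, all of which preserve the finite condition at $p$ since $p \nmid n$ makes $L_n/L_1$ unramified above $p$ and since $V_f$ and $V_{\psi}$ have good reduction at $p$. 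As $V_{\wp}/p$ is self-dual, $H^1_f(K_v,V_{\wp}/p)$ equals its own annihilator for $\langle\ ,\ \rangle_v$ by Bloch--Kato, hence is isotropic, and the term is zero. What remains is exactly $\sum_{\lambda \mid \ell \mid n} \langle s_{\lambda}^{\psi}, \res_{\lambda} P(\varphi_n)^{\psi}\rangle_{\lambda}$, which is therefore $0$.

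The step I expect to be the main obstacle is the verification at the primes above $p$, namely that $\res_v P(\varphi_n)^{\psi}$ satisfies the finite local condition. This rests on the theorem that the $p$-adic \'etale Abel--Jacobi map carries homologically trivial cycles into $H^1_f$ (as in the work of Nekov{\'a}{\v{r}}), combined with the stability of this condition under the Kolyvagin derivative $D_n$, under corestriction and restriction along the extensions $L_n/L_1$ and $L_1/K$ which are unramified above $p$, under reduction modulo $p$, and under the $\psi$-projection. A subsidiary but genuine point is to check that the $\psi$-decoration commutes with localization and pairs with itself correctly, so that the surviving contributions are literally the $\langle s_{\lambda}^{\psi}, \res_{\lambda} P(\varphi_n)^{\psi}\rangle_{\lambda}$ appearing in the statement.
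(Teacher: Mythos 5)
Your proof takes exactly the route the paper indicates by citing Nekov\'{a}\v{r}'s Proposition~11.2(2): the vanishing of the sum of local invariants (global reciprocity for $H^2(K,\mu_p)$) combined with the local vanishing supplied by Proposition~\ref{Ramification2} and the finite conditions at $p$. The paper's own proof is a one-line citation, and what you have written is a correct expansion of that citation, with the two genuinely delicate points (the crystalline condition at $v\mid p$ and the compatibility of the $\psi$-projection with localization and duality) correctly flagged.
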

\begin{proof}
The proof follows \cite[proposition~11.2(2)]{nekovar1992kolyvagin} where both the reciprocity law and the ramification properties of $P(\varphi_n)$ in proposition \ref{Ramification2} are used. 
\end{proof}

\begin{proposition}
\begin{enumerate}[leftmargin=0cm,itemindent=.5cm,labelwidth=\itemindent,labelsep=0cm,align=left]
\item The element $ \omega_{\ell}(res_{\lambda}  P(\varphi_{ \ell })^{\psi} )$ vanishes in $ X_{\ell}^{\psi}$ for $\ell$ in $L(U_0)$.\label{Vanishing2}
\item The modules $ \{ X_{\ell}^{\psi} \}_{ \ell \in L(U_0)}$ are generated over $O_{F,\wp}/p $ by $ \omega_q (res_{\beta} P( \varphi_{\ell_0 q})^{\psi})$.
\item The module $ S^{\overline{\psi}}$ is of rank 1 over $O_{F, \wp}/p $. 
\end{enumerate}
\end{proposition}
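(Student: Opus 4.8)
The plan is to prove the three parts in the order stated, each feeding the next; the whole argument is the concluding half of Kolyvagin's descent, combining the reciprocity law of Proposition \ref{Reciprocity2} with the Euler system relations of Sections 5--6 and the non-vanishing input of Proposition \ref{Choice2}.

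\textbf{Part (1).} I would apply Proposition \ref{Reciprocity2} with $n=\ell$. Since a Kolyvagin prime is inert in $K$, there is a unique prime $\lambda\mid\ell$, so the sum $\sum_{\lambda\mid\ell\mid n}\langle s_\lambda^\psi,\res_\lambda P(\varphi_n)^\psi\rangle_\lambda$ collapses to the single term $\langle s_\lambda^\psi,\res_\lambda P(\varphi_\ell)^\psi\rangle_\lambda$, which therefore vanishes for every $s\in S$. By construction $\omega_\ell$ is the $\mathbb{Z}/p$-dual of $\res_\lambda\colon S\to H^1(K^{ur}_\lambda/K_\lambda,(V_\wp/p)^{I_\lambda})$ under the Tate pairing, so $\omega_\ell(\res_\lambda P(\varphi_\ell)^\psi)$ is precisely the functional $s\mapsto\langle\res_\lambda s,\res_\lambda P(\varphi_\ell)^\psi\rangle_\lambda$. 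The reciprocity identity shows it is the zero functional, hence $0$ in $S^{dual}$ and a fortiori in $X_\ell^\psi$.

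\textbf{Part (2).} By the isomorphism \eqref{TateIsomorphisms} the module $H^1(K^{ur}_\lambda,V_\wp/p)^\psi$ has rank $2$ over $\mathcal{O}_{F,\wp}/p$, and by Proposition \ref{Generators2} it is spanned by $\res_\lambda P(\varphi_\ell)^\psi$ and $\res_\lambda P(\varphi_{\ell q})^\psi$. Applying $\omega_\ell$ and using Part (1) to kill the first generator, $X_\ell^\psi$ is generated by $\omega_\ell(\res_\lambda P(\varphi_{\ell q})^\psi)$ alone. Now apply Proposition \ref{Reciprocity2} with $n=\ell q$: the sum has exactly two terms, one at $\lambda\mid\ell$ and one at $\beta\mid q$, yielding $\langle s_\lambda^\psi,\res_\lambda P(\varphi_{\ell q})^\psi\rangle_\lambda=-\langle s_\beta^\psi,\res_\beta P(\varphi_{\ell q})^\psi\rangle_\beta$ for all $s\in S$, that is, $\omega_\ell(\res_\lambda P(\varphi_{\ell q})^\psi)=-\omega_q(\res_\beta P(\varphi_{\ell q})^\psi)$ in $S^{dual}$. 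Finally I would invoke the Euler system relations, Proposition \ref{LocalRelation2} and Lemma \ref{Corestriction2}, which relate $\res_\beta P(\varphi_{\ell q})$ to $\res_\beta P(\varphi_{\ell_0 q})$ up to a Frobenius twist and an explicit unit of $\mathcal{O}_{F,\wp}/p$, together with the complex-conjugation relations of Lemma \ref{nonvanishing}, to conclude that $\omega_q(\res_\beta P(\varphi_{\ell q})^\psi)$ is a unit multiple of the fixed class $\omega_q(\res_\beta P(\varphi_{\ell_0 q})^\psi)$, $\ell_0$ being a once-and-for-all choice in $L(U_0)$. Hence every $X_\ell^\psi$ equals $(\mathcal{O}_{F,\wp}/p)\cdot\omega_q(\res_\beta P(\varphi_{\ell_0 q})^\psi)$, a module of rank at most $1$.

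\textbf{Part (3).} By Proposition \ref{SDual2} the modules $\{X_\ell\}_{\ell\in L(U_0)}$ generate $S^{dual}$; applying the (additive) operator $c\mapsto c^\psi$ shows that $\{X_\ell^\psi\}_{\ell\in L(U_0)}$ generate $(S^{dual})^\psi$, which under the perfect global pairing is dual to $S^{\overline\psi}$ (the pairings interchange the $\psi$- and $\overline\psi$-eigenspaces, exactly as $\tau$ does in Lemma \ref{nonvanishing}). By Part (2) this forces $\rank_{\mathcal{O}_{F,\wp}/p}S^{\overline\psi}\le 1$. For the matching lower bound, recall that $P(\varphi_1)\in S$ by Proposition \ref{Ramification2}, so $P(\varphi_1)^{\overline\psi}\in S^{\overline\psi}$; Proposition \ref{Choice2} together with Lemma \ref{nonvanishing} gives $\res_\beta P(\varphi_1)^{\overline\psi}\neq 0$ in $H^1(K^{ur}_\beta/K_\beta,(V_\wp/p)^{I_\beta})$, and feeding $s=P(\varphi_1)$ into the reciprocity identity of Part (2) with $n=\ell_0 q$ shows that the generator $\omega_q(\res_\beta P(\varphi_{\ell_0 q})^\psi)$ pairs non-trivially with $\res_\beta P(\varphi_1)^{\overline\psi}$, hence is non-zero in $S^{dual}$. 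Therefore $(S^{dual})^\psi$, equivalently $S^{\overline\psi}$, has rank exactly $1$, which is Theorem \ref{theorem 2}.

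The hard part is the last step of Part (2): identifying all of the classes $\omega_q(\res_\beta P(\varphi_{\ell q})^\psi)$, as $\ell$ ranges over $L(U_0)$, with one and the same generator. This demands careful bookkeeping of the local norm-compatibility of Proposition \ref{LocalRelation2}, the corestriction isomorphism of Lemma \ref{Corestriction2}, and the complex-conjugation action of Lemma \ref{nonvanishing} on the rank-two modules $H^1(K^{ur}_\beta,V_\wp/p)^\psi$; it is also the place where the single hypothesis $P(\varphi_1)^\psi\neq 0$ must be propagated so as to guarantee that the common generator is itself non-zero.
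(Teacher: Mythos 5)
Your Parts (1) and (3) essentially track the paper's argument, but your Part (2) takes a detour that introduces both unnecessary work and a likely gap.

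For Part (2), you correctly derive from the reciprocity relation that $\omega_\ell(\res_\lambda P(\varphi_{\ell q})^\psi)=-\omega_q(\res_\beta P(\varphi_{\ell q})^\psi)$ and hence $X_\ell^\psi$ is generated by $\omega_q(\res_\beta P(\varphi_{\ell q})^\psi)$. At that point, however, the decisive observation is simply that this generator lies in $X_q^\psi$, the image of $\omega_q$, and that $X_q^\psi$ itself has rank at most $1$: by the analogue of Proposition \ref{Generators2} at the prime $\beta$ the module $H^1(K_\beta^{ur},V_\wp/p)^\psi$ is generated by $\res_\beta P(\varphi_q)^\psi$ and $\res_\beta P(\varphi_{\ell_0 q})^\psi$, and Part (1) applied to $q$ kills the image of the first. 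Thus all the $X_\ell^\psi$ are contained in one and the same rank-$\le 1$ module $X_q^\psi$ generated by $\omega_q(\res_\beta P(\varphi_{\ell_0 q})^\psi)$. There is no need to identify $\omega_q(\res_\beta P(\varphi_{\ell q})^\psi)$ with a unit multiple of the fixed generator as $\ell$ varies; containment in $X_q^\psi$ suffices. Your proposed route via Proposition \ref{LocalRelation2} and Lemma \ref{Corestriction2} would not deliver that identification anyway, since those statements compare $\res_{\lambda_n}P(\varphi_n)$ with $\res_{\lambda_m}P(\varphi_m)$ at the prime $\lambda$ above $\ell$ for nested conductors $m\mid n$; they say nothing about how the restrictions at $\beta$ vary with $\ell$. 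So the ``hard part'' you flag is in fact a step the argument does not require, and your stated mechanism for carrying it out does not apply. Your Part (3) also reproves the lower bound in a more roundabout way (by pairing the generator of $S^{dual,\psi}$ against $\res_\beta P(\varphi_1)^{\overline\psi}$), whereas the paper simply notes that $P(\varphi_1)^{\overline\psi}\in S^{\overline\psi}$ is non-zero by the hypothesis and Lemma \ref{nonvanishing}; your version is not wrong, just less direct.
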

\begin{proof}
\begin{enumerate}[leftmargin=0cm,itemindent=.5cm,labelwidth=\itemindent,labelsep=0cm,align=left]
\item
Recall that $\lambda$ is the prime above $\ell$ in $K$. The image of $res_{\lambda}   P(\varphi_\ell)^{\psi} $ by the map $$\omega_{\ell}:  H^1(K_{\lambda}^{ur}, V_{\wp}/p)^{\psi} \longrightarrow X_{\ell}^{\psi}$$ is the homomorphism $$S \longrightarrow \mathbb{Z}/p\mathbb{Z} \ : \ s^{\psi} \mapsto \ < s_{\lambda}^{\psi},  P(\varphi_\ell)_{\lambda}^{\psi} >_{\lambda}.$$
Proposition \ref{Reciprocity2} implies that
$$ < s_{\lambda}^{\psi},  P(\varphi_\ell)_{\lambda}^{\psi} >_{\lambda}=0.$$ 
Hence, the image by $\omega_{\ell}$ of $res_{\lambda}   P(\varphi_\ell)^{\psi}$, 
one of the two generators of $ H^1(K_{\lambda}^{ur}, V_{\wp}/p)^{\psi}$ by proposition \ref{Generators2} vanishes.

\item
Let $\beta$ be the prime above $q$ in $K$.
Proposition \eqref{Reciprocity2} implies that $$ 
< s_{\lambda}^{\psi}, P( \varphi_{\ell q})_{\lambda}^{\psi} >_{\lambda}+<s_{\beta}^{\psi}, P( \varphi_{\ell q})_{\beta}^{\psi} >_{\beta}=0.$$
Hence, $$ \omega_{\ell}( res_{\lambda} P( \varphi_{\ell q})^{\psi})+\omega_{q}(res_{\beta} P( \varphi_{\ell q})^{\psi}) =0.$$
Therefore, $X_\ell^{\psi} $ is generated by $\omega_{q}(res_{\beta} P( \varphi_{\ell q})^{\psi})$ for all $\ell \in L(U_0)$. As a consequence, $$\{ X_\ell^{\psi} \}_{\ell \in L(U_0)} \ \subseteq \ X_{q}^{\psi},$$
where the rank one module $X_{q}^{\psi}$ is generated over $O_{F, \wp}/p $ by $\omega_{q}(res_{\beta} P( \varphi_{\ell_0 q})^{\psi})$ for some $\ell_0 $ in $L(U_0)$.

\item
By proposition \eqref{SDual2}, the set $\{ X_{\ell}^{\psi} \}_{\ell \in L(U_0)}$ generates $ S^{dual, \psi}$.
Furthermore, $P(\varphi_1)^{\overline{\psi}} $ belongs to $S^{\overline{\psi}}$ by Proposition \eqref{Ramification2} and is non-zero by the hypothesis on $ P(\varphi_1)^{\psi}$ and Proposition \eqref{nonvanishing}.
Therefore,
$$\rank(  S^{\overline{\psi}}) = \rank(   S^{dual, \psi})=1$$ over $O_{F, \wp}/p $.
\end{enumerate}
\end{proof}

\end{document}